\newtheorem{theorem}{Theorem}[section]
\newtheorem{corollary}[theorem]{Corollary}
\newtheorem{lemma}[theorem]{Lemma}
\theoremstyle{definition}
\newtheorem{definition}[theorem]{Definition}
\newtheorem{proposition}[theorem]{Proposition}
\theoremstyle{remark}
\newtheorem{remark}[theorem]{Remark}
\newtheorem{example}[theorem]{Example}
\DeclareMathOperator{\Res}{Res}
\DeclareMathOperator{\Ind}{Ind}
\DeclareMathOperator{\res}{res}
\DeclareMathOperator{\ind}{ind}
\DeclareMathOperator{\End}{End}
\DeclareMathOperator{\Mod}{-mod}
\DeclareMathOperator{\sgn}{sgn}
\DeclareMathOperator{\ct}{ct}
\DeclareMathOperator{\id}{id}
\newcommand{\CC}{\mathbb C}
\newcommand{\FF}{\Bbbk}
\newcommand{\NN}{\mathbb N}
\newcommand{\LL}{\mathbb L}
\newcommand{\ZZ}{\mathbb{Z}}
\newcommand{\ty}{\mathrm{wt}}
\newcommand{\inv}{\mathrm{Inv}}
\newcommand{\pa}{\mathrm{Pair}}
\newcommand{\rn}{\mathrm{rank}}
\title[Jucys--Murphy elements for generalized rook monoids]
{Jucys--Murphy elements and Grothendieck groups for generalized rook monoids}
\subjclass[2020]{20M30; 16G99}
\keywords{Generalized rook monoids; Jucys--Murphy elements; Gelfand--Zeitlin basis; Bicyclic monoid; Grothendieck group}
\author[Mazorchuk]{Volodymyr Mazorchuk}
\address{Department of Mathematics, Uppsala University, Box 480, SE75106, Sweden}
\email{mazor@math.uu.se}
\author[Srivastava]{Shraddha Srivastava}
\address{Department of Mathematics, Uppsala University, Box 480, SE75106, Sweden}
\email{maths.shraddha@gmail.com}
\begin{document}

\begin{abstract}
We consider a tower of generalized rook monoid algebras over the field 
$\CC$ of complex numbers  and observe that the Bratteli diagram associated to this 
tower  is a simple graph. We construct simple modules and describe Jucys--Murphy elements 
for  generalized rook monoid algebras.  

Over an algebraically closed field  $\FF$ of positive characteristic  $p$, utilizing Jucys--Murphy elements 
of rook monoid algebras, for $0\leq i\leq p-1$ we define the corresponding $i$-restriction 
and $i$-induction functors along with two extra functors. On the direct sum $\mathcal{G}_{\CC}$ 
of the Grothendieck groups of module categories over rook monoid algebras over $\FF$, 
these functors induce an action of the tensor product of the universal enveloping 
algebra $U(\widehat{\mathfrak{sl}}_p(\CC))$ and the monoid algebra $\CC[\mathcal{B}]$ of the 
bicyclic monoid $\mathcal{B}$. Furthermore, we prove that $\mathcal{G}_{\CC}$ is isomorphic 
to the tensor product of the basic representation of $U(\widehat{\mathfrak{sl}}_{p}(\CC))$ and the unique infinite-dimensional simple  module over $\CC[\mathcal{B}]$, and also exhibit that $\mathcal{G}_{\CC}$ is a bialgebra. Under some natural restrictions on the characteristic of $\FF$, we 
outline the corresponding result for  generalized rook monoids.
\end{abstract}

\maketitle
	
\section{Introduction}\label{s1}

The aim of this paper is to prove several results on the representation theory of certain inverse 
semigroups called generalized rook monoids and on the structure of their semigroup algebras. These results 
are motivated by the corresponding results for the wreath products of symmetric and cyclic groups.
The latter groups appear as maximal subgroups in generalized rook monoids. Below we explain our motivation and
result in more detail. 

Let $R_n$ be the set consisting of all $n\times n$ matrices with entries from $\{0,1\}$ and 
with the further condition that each row and each column contains at most one non-zero entry.  
The matrix multiplication defines on $R_n$ the structure of a monoid, called the
{\em rook monoid}, cf. \cite{Solomon}. The monoid $R_n$ is alternatively known as the 
symmetric inverse semigroup, see \cite{Li,GM}. It is very well-known, see for example 
\cite{Munn}, that the rook monoid algebra $\CC[R_n]$ is semisimple, moreover, all simple modules over
this algebra are very well-understood, see \cite{GM,Steinberg,Grood}.

For a positive integer $r$, let $C_r$ denote the multiplicative cyclic group of order $r$. 
We can consider the wreath product $C_r\wr R_n$, called the {\em generalized rook monoid }
in \cite{Benadvances}, whose elements are all $n\times n$ matrices with entries from $C_r\cup\{0\}$
and with the condition that each row and each column contains at most one non-zero entry. 
Many of the results on the representations of the rook monoid obtained in  \cite{Solomon}
were extended to the case of the generalized rook monoid in \cite{Benadvances}.
	 
Motivated by the construction of the irreducible representations as seminormal representations 
in the case of symmetric groups and generalized symmetric groups, in this article, we give 
a similar construction of the irreducible representations of $C_r\wr R_n$ in Theorem~\ref{thm:constirre}. 
The set of elements of $C_r\wr R_n$ whose $(n,n)$-th entry is equal to $1$ is a submonoid of 
$C_r\wr R_n$ and this submonoid is isomorphic to $C_r\wr R_{n-1}$. Now, viewing $C_r\wr R_{n-1}$ 
as a submonoid of $C_r\wr R_n$ in this way, we have the following tower of generalized 
rook monoid algebras:
\begin{align}\label{al:tower1}
\CC[C_{r}\wr R_0] \subset \CC[C_r\wr R_{1}]\subset  \cdots \subset \CC[C_r\wr R_{n}]\subset \cdots
\end{align}

For $r=1$, the branching rule for the restriction of an irreducible representation for each 
successive inclusion of algebras in \eqref{al:tower1} is multiplicity-free by \cite[Section 3]{Hal04}. 
This means that, in this case, the Bratteli diagram of \eqref{al:tower1} is a simple graph. 
In this article, we prove a similar result for an arbitrary positive integer $r$ in 
Corollary~\ref{coro:branching}. In particular, this gives a natural basis of each
irreducible representation of an algebra in the tower \eqref{al:tower1} indexed by certain paths 
in the Bratteli diagram, usually called the {\em Gelfand--Zeitlin basis}. If we replace, 
in \eqref{al:tower1},  $\CC$ by an algebraically closed  field $\FF$ of positive characteristic, then our method gives 
a modular branching rule as well. We construct a Gelfand model for $\CC[C_r\wr R_n]$ in 
Proposition~\ref{prop:gel} which is a generalization of the case $r=1$ as considered in \cite{MK}, 
see also \cite{MV} and \cite{HR}.

The construction of seminormal representations of the symmetric group $S_n$ is closely connected to the existence of some special elements, called  Jucys--Murphy elements, in the group algebra $\CC[S_n]$, see the introduction of \cite{Ram}.  Jucys--Murphy elements for $\CC[R_n]$ were constructed in \cite{MS21}.
In Section~\ref{sec:JM}, we construct Jucys--Murphy elements for $\CC[C_r\wr R_n]$
(these elements are defined over any field in which $r$ is non-zero).  Moreover, we observe that the expression 
for  Jucys--Murphy elements of $\CC[R_n]$, given in Section~\ref{sec:JM}, is simpler 
than the one in \cite{MS21}. We also show that Jucys--Murphy elements satisfy the fundamental 
properties similar to the ones from the classical setup of symmetric groups. In particular, we have:
\begin{enumerate}[(a)]
\item  Proposition~\ref{prop:JMcommute} shows that these elements commute with each other. 
\item Theorem~\ref{thm:JM} proves that these elements act as scalars on 
all elements of the Gelfand--Zeitlin basis of every simple $\CC[C_r\wr R_n]$-module. 
\item Corollary \ref{coro:irre} states that the eigenvalues of the action of Jucys--Murphy elements on elements of
the Gelfand--Zeitlin basis distinguish non-isomorphic simple modules.
\end{enumerate}

Let now $\FF$ be an algebraically closed field of positive characteristic $p$.  For a finite-dimensional associative 
$\FF$-algebra $A$, let $A\Mod$ denote the category of finite-dimensional left $A$-modules.
Consider the Grothendieck group $K_0(A\Mod)$ of $A\Mod$ and the complexified Grothendieck group $G_0(A)=\CC\otimes_{\ZZ}K_0(A\Mod)$, where $\ZZ$ denotes the ring of integers.

Let $\NN$ denote the set of all non-negative integers. A classical result, proved in  \cite{LLT}, asserts that 
\begin{displaymath}
{\bigoplus_{n\in \NN }}G_0(\FF[S_n])
\end{displaymath}
has the natural structure of a module over the universal 
enveloping algebra $U(\widehat{\mathfrak{sl}}_{p}(\CC))$ of the affine Lie algebra 
$\widehat{\mathfrak{sl}}_{p}(\CC)$ of type $A_{p-1}^{(1)}$. Moreover, this module can be identified as
the basic representation $V(\Lambda_{0})$ of $U(\widehat{\mathfrak{sl}}_{p}(\CC))$. This result was also 
established in  \cite{Groj} for a more general setting with different techniques.  
One of the ways to obtain these results is to define the $i$-restriction and $i$-induction 
functors, for $0\leq i\leq p-1$, using Jucys--Murphy elements of $\FF[S_n]$.  
Then one can show that, at the level of Grothendieck group, the functors satisfy the 
relations for the Chevalley generators of $\widehat{\mathfrak{sl}}_{p}(\CC)$. 
	
Motivated by these classical results,  we use our Jucys--Murphy elements for rook monoid 
algebras to define, for  $0\leq i\leq p-1$, the $i$-restriction functor $\res_{i}$ 
and the $i$-induction functor $\ind_{i}$ in the rook monoid setup, see \eqref{al:E} and \eqref{al:A}.
We also define two extra functors $\mathbb{A}$ and $\mathbb{B}$ which correspond to 
the additional edges in the Bratteli diagram for rook monoids, see \eqref{al:A}.
In Theorem~\ref{thm:main}, we show that, at the level of the direct sum 
\begin{equation}\label{eq1new}
{\bigoplus_{n\in\NN}}G_0(\FF[R_n]),
\end{equation}
of the Grothendieck groups, the functors $\res_{i}$ and $\ind_{i}$, for $0\leq i\leq p-1$, 
satisfy the relations for the Chevalley generators of $\widehat{\mathfrak{sl}}_{p}(\CC)$. 
Additionally, the functors $\mathbb{A}$ and $\mathbb{B}$ satisfy the relation of the 
generators of the bicyclic monoid $\mathcal{B}$ and commute with all 
$\res_{i}$ and $\ind_{i}$. Furthermore, we show that the 
Grothendieck group \eqref{eq1new} is isomorphic to the tensor product of the basic  representation 
$V(\Lambda_0)$ with the unique simple infinite-dimensional $\CC[\mathcal{B}]$-module $V_{\NN}$, 
as modules over $U(\widehat{\mathfrak{sl}}_{p}(\CC))\otimes_{\CC} \CC[\mathcal{B}]$. 

Assume that $r$ is non-zero in $\Bbbk$. Then, using the result for the generalized symmetric group 
algebras similar to the ones proved in \cite{ShuJ} and also in \cite{Wang}, in Subsection~\ref{sec:gen} 
we conclude that the Grothendieck group 
$$\bigoplus_{n\in \NN}G_0(\FF[C_{r}\wr R_n])$$ 
is isomorphic to $V(\Lambda_{0})^{\otimes r}\otimes_{\CC} V_{\NN}$ as a module over
the algebra $U(\widehat{\mathfrak{sl}}_{p}(\CC))^{\otimes r}\otimes_{\CC} \CC[\mathcal{B}]$. 
	
It is well known that $\displaystyle{\bigoplus_{n\in \NN }}G_0(\FF[S_n])$ is a Hopf algebra,
where the multiplication and the comultiplication are obtained by using appropriate induction 
and restriction functors, respectively, e.g. see \cite[Chapter I]{Mac}.  In Theorem \ref{thm:bi}, we prove 
that \eqref{eq1new}  is a bialgebra where the multiplication and the comultiplication are again 
obtained by using certain induction and restriction functors, respectively.
\vspace{3mm}

{\bf Acknowledgments.}
The authors thank Weiqiang Wang for bringing the reference \cite{Wang} to our attention.  
The second author also thanks Arun Ram for fruitful discussions. The first author is partially supported by the Swedish Research Council and 
G{\"o}ran Gustafsson Stiftelse.

\section{Generalized rook monoids}\label{s2}

In what follows, $\Bbbk$ is an algebraically closed field.

Recall that $C_r\wr R_n$ denote the generalized rook monoid. 
For $0\leq i\leq n$, let $f_i\in C_r\wr R_n$ be the diagonal matrix whose $(k,k)$-th 
entry is $0$, when $i+1\leq k\leq n$, and the remaining diagonal entries are equal to $1$. 
Note that $f_0$ and $f_n$ are the zero matrix and the identity matrix in $C_r\wr R_n$, respectively. 

Green's left cell $\mathbb{L}_{i}^{n}$ of $C_r\wr R_n$ corresponding to the idempotent $f_i$
consists, by definition,  of all $\sigma\in C_r\wr R_n$ satisfying
$$(C_r\wr R_n)\sigma=(C_r\wr R_n)f_i.$$
Then $\mathbb{L}_{i}^{n}$ consists of all rank $i$ matrices in $C_r\wr R_n$ whose 
$j$-th column is zero, for all $i+1\leq j\leq n$.  The maximal subgroup of $C_r\wr R_n$ 
corresponding to $f_i$ is the subgroup consisting of matrices in $C_r\wr R_n$, whose non-zero entries lie on the first $i\times i$-block. This subgroup is evidently isomorphic to the generalized symmetric group $C_r\wr S_i$, where $S_i$ is the symmetric 
group on $i$ letters. Unless stated otherwise, we use this identification throughout the manuscript.
Note that $C_r\wr S_i$ acts on $\LL_{i}^n$ from the right in the obvious way.  

Let $[n]:=\{1,2,\ldots,n\}$ and 
$\mathcal{S}_i:=\{Z\subseteq [n]\mid |Z|=i \}$. 
For $Z\in \mathcal{S}_i$, write
\begin{displaymath}
 Z=\{r_1<r_2< \cdots<r_i\},
 \end{displaymath}
 and let 
$h_{Z}^n \in C_r\wr R_n$ be  such that the non-zero entries of $h_Z^n$ are equal 
to $1$ and they are at the coordinates $(r_1,1),\ldots,(r_i,i)$. Note that 
$h_Z^n\in\mathbb{L}_i^n$ and, moreover, these matrices form a cross-section of 
the orbits of the right action of $C_r\wr S_i$ on $\LL_i^n$. 
In other words, $\Bbbk\mathbb{L}_i^n$ is a free right $\FF[C_{r}\wr S_i]$-module which 
a $\FF[C_{r}\wr S_i]$-basis consisting of all matrices of the form $h_Z^n$, where 
$Z\in \mathcal{S}_i$  (we use this basis often in what follows).

The space $\Bbbk\mathbb{L}_i^n$ is also naturally a left $\Bbbk[C_{r}\wr R_n]$-module
where, for $\tau\in C_r\wr R_n$ and $\sigma\in\mathbb{L}_i^n$, the action is given by
\begin{displaymath}
\tau \sigma=
\begin{cases}
\tau\sigma, & \text{ if } \tau\sigma\in\mathbb{L}_{i}^{n};\\
0,  & \text{ otherwise. }
\end{cases}
\end{displaymath}
These two actions on $\Bbbk\mathbb{L}_i^n$ obviously commute, making $\Bbbk\mathbb{L}_i^n$ 
a $(\FF[C_{r}\wr R_n],\FF[C_{r}\wr S_i])$-bimodule. The associated functor
\begin{displaymath}
\mathcal{L}_{i}^n:=(\FF\mathbb{L}_i^n\otimes_{\FF[C_{r}\wr S_i]}{}_-):
\FF[C_{r}\wr S_i]\Mod\to \FF[C_{r}\wr R_n]\Mod
\end{displaymath}
is full, faithful and exact, see \cite[Chapter~4]{Steinberg}.
	
\begin{lemma}\label{lm:Requi}
The following functor is an equivalence of categories
\begin{displaymath}
\bigoplus_{i=0}^{n} \mathcal{L}_i^n:\bigoplus_{i=0}^{n} \FF[C_{r}\wr S_i]\Mod\rightarrow 
\FF[C_{r}\wr R_n]\Mod.
\end{displaymath}
\end{lemma}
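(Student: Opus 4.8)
The plan is to exhibit the functor $\bigoplus_i \mathcal L_i^n$ as an explicit equivalence by checking it is fully faithful and dense, or, equivalently, by using the general machinery for monoid algebras whose simple modules are parametrized by apex cells. Since each $\mathcal L_i^n$ is already known to be full, faithful and exact by \cite[Chapter~4]{Steinberg}, and since fullness and faithfulness of a direct sum of functors between direct sums of categories follow once one knows there are no nonzero morphisms \emph{across} distinct summands, the first task is to check orthogonality: for $i \neq j$ and modules $M$ over $\FF[C_r\wr S_i]$, $N$ over $\FF[C_r\wr S_j]$, one has $\Hom_{\FF[C_r\wr R_n]}(\mathcal L_i^n M, \mathcal L_j^n N) = 0$. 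This is a statement about the apexes (the $\mathcal J$-classes supporting the modules): the image of $\mathcal L_i^n$ consists of modules on which the idempotent $f_i$ acts in a controlled way while $f_{i-1}$ annihilates the ``top'', and a morphism between modules with distinct apexes must vanish. Concretely, one argues that the two-sided ideal filtration of $\FF[C_r\wr R_n]$ by rank separates these images.

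Next I would establish density. Given a finite-dimensional $\FF[C_r\wr R_n]$-module $V$, I want to reconstruct it from its ``layers''. Using the chain of idempotents $f_0, f_1, \dots, f_n$ and the associated two-sided ideals $I_i := \FF[C_r\wr R_n] f_i \FF[C_r\wr R_n]$, one gets a filtration $0 = I_{-1} \subseteq I_0 \subseteq \cdots \subseteq I_n = \FF[C_r\wr R_n]$ whose successive subquotients $I_i / I_{i-1}$ are isomorphic, as bimodules, to $\FF\LL_i^n \otimes_{\FF[C_r \wr S_i]} (\FF\LL_i^n)^{*}$ — this is the standard description of the layers of a finite inverse semigroup algebra, and it is precisely here that the freeness of $\FF\LL_i^n$ as a right $\FF[C_r\wr S_i]$-module (with basis the $h_Z^n$) recorded just above is used. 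Because $\FF[C_r\wr S_i]$ is a finite group algebra, the subquotient bimodule is separable/projective enough that the filtration splits after applying $\Hom$ or $\otimes$; combining this with semisimplicity of each $\FF[C_r\wr S_i]$ in characteristic zero (or, in the modular case, with the general theory of \cite{Steinberg} for when $\FF[C_r\wr R_n]$ is ``cellularly stratified'') yields that every $V$ decomposes as $\bigoplus_i \mathcal L_i^n(V_i)$ for suitable $\FF[C_r\wr S_i]$-modules $V_i$. An alternative and cleaner route, which I would actually prefer to write up, is a dimension/character count: by \cite{Steinberg} the simple $\FF[C_r\wr R_n]$-modules are exactly the $\mathcal L_i^n(D)$ for $D$ running over simple $\FF[C_r\wr S_i]$-modules and $i = 0, \dots, n$; fullness and faithfulness of each $\mathcal L_i^n$ plus the orthogonality from the previous paragraph then show $\bigoplus_i \mathcal L_i^n$ induces a bijection on isomorphism classes of simples and is fully faithful, and since both categories are module categories over finite-dimensional algebras (hence have enough projectives and are determined by their bounded derived category / Morita theory), a fully faithful exact functor that is essentially surjective on simples is an equivalence. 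In fact, the slickest argument is: $\bigoplus_i \mathcal L_i^n$ is exact, fully faithful, and its essential image is a Serre subcategory containing all simple modules, hence is everything.

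The main obstacle is the density/essential-surjectivity step: fullness, faithfulness and exactness of the individual $\mathcal L_i^n$ are imported from \cite{Steinberg}, and orthogonality across summands is a routine apex argument, but showing that \emph{every} $\FF[C_r\wr R_n]$-module lies in the essential image requires knowing that the ideal filtration by matrix rank splits — equivalently, that $\FF[C_r\wr R_n]$ is Morita equivalent to $\bigoplus_{i=0}^n \FF[C_r\wr S_i]$. In characteristic zero this is immediate from semisimplicity plus the classification of simples; in the modular case one must invoke that the layers $\FF\LL_i^n \otimes_{\FF[C_r\wr S_i]} (\FF\LL_i^n)^*$ are ``heredity-like'' layers making $\FF[C_r\wr R_n]$ a split quasi-hereditary-type (more precisely, a direct sum up to Morita equivalence) — this is exactly the content of \cite[Chapter~4]{Steinberg} for symmetric inverse semigroups and wreath products thereof, so the lemma follows by citing that structural result and assembling the pieces. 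I would therefore organize the write-up as: (1) recall the ideal filtration and its layers; (2) deduce orthogonality of the images; (3) quote fullness/faithfulness/exactness of each $\mathcal L_i^n$; (4) conclude that the essential image is a Serre subcategory containing all simples, hence all of $\FF[C_r\wr R_n]\Mod$.
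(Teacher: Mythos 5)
Your fallback route --- citing the structural decomposition of the inverse monoid algebra --- is in fact the paper's entire proof: since $\FF\mathbb{L}_i^n$ is a free right $\FF[C_r\wr S_i]$-module of rank $\binom{n}{i}$ and $\bigoplus_{i=0}^n\End_{\FF[C_r\wr S_i]}(\FF\mathbb{L}_i^n)\cong\FF[C_r\wr R_n]$ (see \cite[Section~10.2]{Steinberg}), the algebra is isomorphic to $\bigoplus_i M_{\binom{n}{i}}(\FF[C_r\wr S_i])$, and each $\mathcal{L}_i^n$ is then just the Morita equivalence coming from tensoring with a free module. This holds over any field, no filtration, orthogonality-of-apexes, or count of simples is needed, and your cross-summand orthogonality is itself a consequence of this decomposition rather than a ``routine apex argument'' (for a general finite monoid, which is not inverse, the analogous statement fails).

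The two arguments you say you would actually prefer to write up have genuine gaps. The ``slickest argument'' is invalid as stated: the essential image of an exact, fully faithful functor need not be closed under extensions, hence need not be a Serre subcategory, and ``fully faithful, exact, hits all simples'' does not imply equivalence. For example, the functor from $\FF$-vector spaces to $\FF[x]/(x^2)$-modules equipping a space with the trivial action of $x$ is exact and fully faithful and its image contains the unique simple module, yet the regular module $\FF[x]/(x^2)$ is not in the image; what is missing is control of $\mathrm{Ext}^1$, which again comes down to the algebra decomposition. Likewise, in positive characteristic the claim that the rank filtration splits because $\FF[C_r\wr S_i]$ is ``separable/projective enough'' is false: modular group algebras are not separable, and the layers $\FF\mathbb{L}_i^n\otimes_{\FF[C_r\wr S_i]}(\FF\mathbb{L}_i^n)^{*}$ are not heredity-type layers when the group algebras fail to be semisimple, so no generic quasi-hereditary/cellularly-stratified argument produces the splitting. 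The splitting is a special feature of inverse monoids (commuting idempotents, the groupoid basis obtained by M\"obius inversion), which is exactly the cited result; your write-up should lead with that citation and the freeness of $\FF\mathbb{L}_i^n$, at which point the lemma is immediate, rather than route through the filtration and the Serre-subcategory shortcut.
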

	
\begin{proof}
This follows by combining the standard facts that, for $0\leq i \leq n$, 
the right $\FF[C_{r}\wr S_i]$-module
$\FF\mathbb{L}_{i}^{n}$ is  free and that 
$\displaystyle{\bigoplus_{i=0}^{n}}\End_{\FF[C_r\wr S_i]}(\FF\mathbb{L}_i^n)\cong \FF[C_{r}\wr R_n]$,
see \cite[Section~10.2]{Steinberg}.
\end{proof}

{\emph{Generators}.} \label{gn}For $1\leq j\leq n-1$, let $s_{j}$ denote the simple 
transposition $(j,j+1)$ in $S_n$. Fix a primitive $r$-th root of unity $\xi$ in $C_r$. 
Denote by $P\in C_r\wr S_n$ the diagonal matrix whose $(1,1)$-th entry is $0$ and the 
remaining diagonal entries are equal to $1$. Denote by $Q\in C_r\wr S_n$ the diagonal matrix 
whose $(1,1)$-th entry is $\xi$ and the remaining diagonal entries are equal to $1$. 
Then it is easy to check that $C_r\wr S_n$ is generated by $P$, $Q$ and all $s_{j}$, 
where $1\leq j\leq n-1$.  
	
\section{Seminormal representations}\label{s3}

\subsection{Bases of irreducible representations}\label{s3.1}

In this section, we construct the irreducible representations of $C_r\wr R_n$ over $\CC$, 
give a basis of an irreducible representation of $C_r\wr R_n$ and describe the actions 
of generators of $C_r\wr R_n$. We also give the branching rule 
for the restriction of an irreducible representation of $C_r\wr R_n$ to $C_r\wr R_{n-1}$. 
To obtain these results we need the following notation and definitions. 

Let $\mathcal{P}$ denote the set of all partitions of all non-negative integers. Given a partition $\lambda=(\lambda_{1},\ldots,\lambda_{r})$ of a positive integer, its Young diagram $[\lambda]$ is given as:
\begin{displaymath}
\{(p,q)\mid 1\leq p\leq r \text{ and } 1\leq q\leq \lambda_{p}\}.
\end{displaymath}
We use the usual English notation for Young diagrams.
The elements of $[\lambda]$ are called {\em boxes}. 
By convention, the Young diagram of $0$ is denoted $\varnothing$. 
For $\lambda\in \mathcal{P}$, let $|\lambda|$ denote the number of boxes 
in $[\lambda]$. For $\lambda\in \mathcal{P}$ with $|\lambda|\leq n$, let $\mathcal{Y}(\lambda,n)$ 
denote the set of all fillings of boxes of $[\lambda]$ with different elements from 
$[n]$ such that the entries  increase along the rows from left to right and along 
the columns from top to bottom.  Let 
\begin{displaymath}
\Lambda_r(n):=\bigg\{\lambda^{(r)}=(\lambda_{(1)},\ldots,\lambda_{(r)})\mid 
\lambda_{(i)}\in\mathcal{P}, \, \text{ for } 1\leq i\leq r , \text{ and } \sum_{i=1}^{r}|\lambda_{(i)}|=n \bigg\}.
\end{displaymath}

Let $\lambda^{(r)}\in\Lambda_r(n)$ and $m$ be a non-negative integer such that $n\leq m$. Define
$\mathcal{Y}(\lambda^{(r)},m)$ as the set
\begin{displaymath}
\left\{(L_1,\ldots,L_r)\,\bigg|
\begin{array}{l}
L_i\in\mathcal{Y}({\lambda_{(i)}},m), \text{ for } 1\leq i\leq r;\\
\text{$L_i$ and $L_j$ don't have common entries for }  1\leq i\neq j\leq r
\end{array}
\right\}.
\end{displaymath}
Let $L=(L_1,\ldots,L_r)\in\mathcal{Y}(\lambda^{(r)},m)$ and $1\leq b\leq m$. 
We write $b\in L$ if $b$ appears in one of  $L_k$, for $1\leq k\leq r$, and we 
also say ``$b\in L$ at the position $k$", if $b$ appears in $L_k$.
Define the sign of $b$ in $L$ as
\begin{displaymath}
\sgn_{L}(b):=\begin{cases}
\xi^{k-1}, & \text{ if } b\in L \text{ at the position }k;\\
0, & \text{ otherwise.}
\end{cases}
\end{displaymath}
Let $b\in L$ be at the position $k$ and, further, assume that $b$ is in the box 
$(s_1,s_2)$ in $L_k$. Define the content of  $b$ as $\ct(L(b)):=(s_2-s_1)$. If both $i$ 
and $i+1$ appear in $L$ at the position $k$ (in particular, $\sgn_L(i)=\sgn_L(i+1)$), 
define
\begin{displaymath}
a_{L}(i):=\frac{1}{\ct(L(i+1))-\ct(L(i))}.
\end{displaymath}
Given $L\in\mathcal{Y}(\lambda^{(r)},n)$, let $s_iL$ be obtained from $L$ by replacing $i$ by $i+1$ if $i\in L$, and by replacing $i+1$ by $i$ if $i+1\in L$. Note that it may happen that $s_iL$ does not lie in $\mathcal{Y}(\lambda^{(r)},n)$. For the next statement, we refer e.g. to \cite[Page~169]{HR98}, see also \cite{AK}.

\begin{theorem}\label{thm:gensym}
{\hspace{1mm}}

\begin{enumerate}[$($a$)$]
\item\label{thm:gensym.1} The elements of $\Lambda_r(n)$ index the isomorphism classes 
of irreducible representations of $\mathbb{C}[C_r\wr S_n]$. 
\item\label{thm:gensym.2} For $\lambda^{(r)}\in\Lambda_{r}(n)$, the 
corresponding irreducible representation $W_{\lambda^{(r)}}^n$ of $C_r\wr S_n$ has a 
basis $\{w_{L}\vert L\in\mathcal{Y}(\lambda^{(r)},n)\}$ on which 
the  generators $s_{j}$, for $1\leq j\leq n-1$, and $Q$ act as follows:
\begin{align}
&s_{j}  w_{L}=\begin{cases}
w_{s_jL}, & \text{ if } \sgn_{L}(j)\neq\sgn_{L}(j+1);\\
a_{L}(j)w_{L}+(1+a_{L}(j))w_{s_jL},  &\text{ if } \sgn_{L}(j)=\sgn_{L}(j+1);
\end{cases}\label{al:symsi}\\
& Q  w_{L}=\begin{cases}
\xi^{k-1}w_{L}, & \text{ if } 1\in L \text{ at the position }k;\\
w_{L}, & \text{ otherwise. } 
\end{cases}\label{al:symQ}
\end{align}
Here $w_{s_jL}=0$, if $s_jL\notin\mathcal{Y}({\lambda^{(r)},n})$.
\end{enumerate}
\end{theorem}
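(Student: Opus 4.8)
The plan is to combine Clifford theory for the abelian normal subgroup $C_r^n\trianglelefteq C_r\wr S_n$ with Young's seminormal form for symmetric groups; the result is classical and can also be extracted from \cite[Page~169]{HR98} or \cite{AK}, but here is the route I would follow. For part~\eqref{thm:gensym.1}, the irreducible characters of $C_r^n$ are parametrized by maps $[n]\to\{0,1,\dots,r-1\}$ via the fixed root $\xi$, and the conjugation action of $C_r\wr S_n$ factors through $S_n$ permuting coordinates. Hence the $S_n$-orbits are indexed by the composition $(m_1,\dots,m_r)$ of $n$ recording how many coordinates hit each character value, the stabilizer of a chosen representative $\chi$ is the Young subgroup $S_{m_1}\times\cdots\times S_{m_r}$, and the corresponding inertia subgroup is $H:=(C_r\wr S_{m_1})\times\cdots\times(C_r\wr S_{m_r})$ (embedded along the splitting of $[n]$ into $r$ consecutive blocks of sizes $m_k$). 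The character $\chi$ extends to $H$, since on the $k$-th factor it is the restriction of the one-dimensional character $\chi_k$ of $C_r\wr S_{m_k}$ on which $C_r$ acts by $\xi^{k-1}$ and $S_{m_k}$ acts trivially. Clifford theory then puts the irreducibles of $C_r\wr S_n$ lying over this orbit in bijection with the irreducibles of $S_{m_1}\times\cdots\times S_{m_r}$, that is, with $r$-tuples of partitions of sizes $m_1,\dots,m_r$; ranging over all orbits yields exactly $\Lambda_r(n)$.

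For part~\eqref{thm:gensym.2} I would make this bijection concrete. Given $\lambda^{(r)}=(\lambda_{(1)},\dots,\lambda_{(r)})$ with $m_k=|\lambda_{(k)}|$, realize
\begin{equation*}
W^n_{\lambda^{(r)}}\;\cong\;\Ind_{H}^{C_r\wr S_n}\Big(\bigotimes_{k=1}^{r}\big(\chi_{k}\otimes S^{\lambda_{(k)}}\big)\Big),
\end{equation*}
where each Specht module $S^{\lambda_{(k)}}$, inflated from $S_{m_k}$, carries Young's seminormal basis. Since $C_r^n\subseteq H$, the minimal-length coset representatives of $C_r\wr S_n/H$ are the elements $g_{\mathbf B}\in S_n$ corresponding to ordered set partitions $\mathbf B=(B_1,\dots,B_r)$ of $[n]$ with $|B_k|=m_k$, where $g_{\mathbf B}$ maps the $k$-th standard block increasingly onto $B_k$. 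Pairing such a representative with a tuple of standard tableaux of shapes $\lambda_{(k)}$ and relabelling the $k$-th tableau by the order isomorphism $\{1,\dots,m_k\}\to B_k$ produces precisely an element $L\in\mathcal{Y}(\lambda^{(r)},n)$, and a count of dimensions confirms that the induced vectors $w_L:=g_{\mathbf B}\otimes v$ form a basis.

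It then remains to compute the generator actions from the standard induced-module formula $x\cdot(g_{\mathbf B}\otimes v)=g_{\mathbf B'}\otimes(hv)$, where $xg_{\mathbf B}=g_{\mathbf B'}h$ with $g_{\mathbf B'}$ the minimal representative of $x\mathbf B$ and $h\in H$. The element $Q$ lies in $C_r^n$, and $g_{\mathbf B}^{-1}Qg_{\mathbf B}$ is supported at the coordinate which is the minimum of the standard block whose image is the block of $\mathbf B$ containing $1$; it therefore acts on the tensor factor indexed by that block through $\chi_k$, scaling by $\xi^{k-1}$ with $k$ the position of $1$ in $L$, which is \eqref{al:symQ}. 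For $s_j$: if $j$ and $j+1$ lie in different blocks of $\mathbf B$, one checks that replacing $j$ by $j+1$ (or vice versa) preserves the relative order inside each block, so $h=e$ and $s_jL$ is automatically an element of $\mathcal{Y}(\lambda^{(r)},n)$, giving $s_jw_L=w_{s_jL}$; if $j$ and $j+1$ lie in the same block $B_a$, they occupy consecutive positions $t,t+1$ in the sorted order of $B_a$, so $s_j\mathbf B=\mathbf B$ and $h$ is the Coxeter generator $s'_t$ of $S_{m_a}$, acting on $S^{\lambda_{(a)}}$ by Young's seminormal formula. Translating the contents back to $[n]$-labels turns $1/(\ct(T(t+1))-\ct(T(t)))$ into $a_L(j)$ and $s'_tT$ into $s_jL$, which reproduces \eqref{al:symsi}, with the convention $w_{s_jL}=0$ when $s_jL\notin\mathcal{Y}(\lambda^{(r)},n)$.

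The only part that requires genuine care is this coset bookkeeping: one must verify that minimal-length representatives transform under left multiplication by $s_j$ exactly as claimed---so that no stray element of $H$, and in particular none of $C_r^n$, appears in the ``different blocks'' case---and keep the relabelling and the sign conventions (``position $k$'' versus character index $k-1$, $\xi^{k-1}$ versus $\xi^{k}$) consistent with the statement. Everything else is routine. An alternative would be to run the Okounkov--Vershik inductive machinery along the chain $C_r\wr S_1\subset C_r\wr S_2\subset\cdots$ directly, but the Clifford-theoretic route above is the most economical.
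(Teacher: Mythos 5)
Your argument is essentially correct, but it is worth noting that the paper does not prove Theorem~\ref{thm:gensym} at all: it is quoted from the literature (Halverson--Ram and Ariki--Koike), so your Clifford-theoretic construction is a genuinely different, self-contained route rather than a reconstruction of the paper's argument. What you do is the classical wreath-product analysis: decompose over the abelian normal subgroup $C_r^n$, identify the inertia group $H=(C_r\wr S_{m_1})\times\cdots\times(C_r\wr S_{m_r})$, extend the orbit representative character, and induce the outer tensor product of Specht modules equipped with Young's seminormal bases; the minimal coset representatives $g_{\mathbf B}$ then match bijectively with the multitableaux $L\in\mathcal{Y}(\lambda^{(r)},n)$, and the computations you outline for $Q$ and for $s_j$ (the key points being that $h=e$ when $j,j+1$ lie in different blocks, and that $h$ is the Coxeter generator $s'_t$ of the relevant $S_{m_a}$ when they lie in the same block, with contents invariant under the order-preserving relabelling) do reproduce \eqref{al:symQ} and \eqref{al:symsi}. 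This buys a proof from first principles and makes the restriction behaviour along $C_r\wr S_{n-1}\subset C_r\wr S_n$ transparent, whereas the cited sources obtain the same seminormal matrices as specializations of their (cyclotomic Hecke algebra) constructions. Two small caveats: you must take the normalization of Young's seminormal form in which $s_t$ acts by $\tfrac{1}{d}v_T+\bigl(1+\tfrac{1}{d}\bigr)v_{s_tT}$ with $d=\ct(T(t+1))-\ct(T(t))$, since other common normalizations of the Specht basis would not literally give the coefficient $1+a_L(j)$ in \eqref{al:symsi}; and the ``otherwise'' clause of \eqref{al:symQ} is vacuous here because every element of $[n]$ occurs in $L$ when $\lambda^{(r)}\in\Lambda_r(n)$ --- it only becomes relevant in the rook-monoid extension, Theorem~\ref{thm:constirre}, where your coset bookkeeping would need the extra idempotent generator $P$.
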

The next claim is a generalization of Theorem~\ref{thm:gensym} to the case of $C_r\wr R_n$.

\begin{theorem}\label{thm:constirre}
{\hspace{1mm}}

\begin{enumerate}[$($a$)$]
\item\label{thm:constirre.1}
The elements of $\displaystyle \Lambda_{r}(\leq\hspace{-1mm} n):=\bigcup_{i=0}^{n}\Lambda_{r}(i)$ index the 
irreducible representations of $\mathbb{C}[C_r\wr R_n]$ in the following way:
for $\lambda^{(r)}\in\Lambda_r(i)$, the corresponding irreducible representation is 
$$V_{\lambda^{(r)}}^n:=\mathbb{C}\mathbb{L}_i^n\otimes_{\CC[C_r\wr S_i]} W_{\lambda^{(r)}}^i.$$
\item\label{thm:constirre.3}
$V_{\lambda^{(r)}}^n$ has a basis 
$\{v_{L}\vert L\in \mathcal{Y}(\lambda^{(r)},n)\}$ on which the generators $P$, $Q$, and $s_j$, 
for $1\leq j\leq n-1$, act as follows:
\begin{align}
&s_{j}  v_{L}=\begin{cases}
v_{s_jL}, & \text{ if } j\in L, j+1\notin L;\\
v_{s_jL}, & \text{ if } j\notin L, j+1\in L;\\
v_{L}, & \text{ if } j\notin L, j+1\notin L;\\
v_{s_jL}, & \text{ if } \begin{array}{ll}j\in L, j+1\in L\text{ and }\\ \sgn_{L}(j)\neq\sgn_{L}(j+1);\end{array}\\
a_{L}(j)v_{L}+(1+a_{L}(j))v_{s_jL},  &\text{ if }\begin{array}{ll}j\in L, j+1\in L\text{ and }\\ 
\sgn_{L}(j)=\sgn_{L}(j+1);\end{array}
\end{cases}\label{al:rooksi}\\
&P  v_{L}=\begin{cases}\label{al:P-Q}
v_{L}, & \text{ if } 1\notin L;\\
0, & \text{ otherwise; } 
\end{cases}, \qquad
Q  v_{L}=\begin{cases}
\xi^{k-1}v_{L}, & \text{ if } \begin{array}{ll}1\in L \text{ at the}\\ \text{position }k;\end{array}\\
v_{L}, & \text{ otherwise. } 
\end{cases}
\end{align}
\end{enumerate}
Here $v_{s_jL}=0$, if $s_jL\notin\mathcal{Y}({\lambda^{(r)},n})$.
\end{theorem}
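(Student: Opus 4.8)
The plan is to reduce Theorem~\ref{thm:constirre} to Theorem~\ref{thm:gensym} using the equivalence of categories from Lemma~\ref{lm:Requi} together with the explicit bimodule structure on $\FF\LL_i^n$. Part~(\ref{thm:constirre.1}) is nearly immediate: by Lemma~\ref{lm:Requi}, every simple $\CC[C_r\wr R_n]$-module is of the form $\mathcal{L}_i^n(M)$ for a unique $i$ and a unique simple $\CC[C_r\wr S_i]$-module $M$, since $\mathcal{L}_i^n$ is full, faithful, exact, and $\mathcal{L}_i^n$ sends the zero object to nothing on the other blocks. Combining with Theorem~\ref{thm:gensym}(\ref{thm:gensym.1}), which says the simple $\CC[C_r\wr S_i]$-modules are exactly the $W_{\lambda^{(r)}}^i$ for $\lambda^{(r)}\in\Lambda_r(i)$, we get that $\{V_{\lambda^{(r)}}^n=\CC\LL_i^n\otimes_{\CC[C_r\wr S_i]}W_{\lambda^{(r)}}^i : \lambda^{(r)}\in\Lambda_r(\leq n)\}$ is a complete irredundant list of simples. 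The only thing to check is that $\mathcal{L}_i^n$ does preserve simplicity and non-isomorphism, which is exactly fullness plus faithfulness plus exactness.

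For part~(\ref{thm:constirre.3}), first I would fix the basis. Using the $\FF[C_r\wr S_i]$-basis $\{h_Z^n : Z\in\mathcal{S}_i\}$ of $\FF\LL_i^n$ noted before Lemma~\ref{lm:Requi}, a basis of $V_{\lambda^{(r)}}^n$ is $\{h_Z^n\otimes w_M : Z\in\mathcal{S}_i,\ M\in\mathcal{Y}(\lambda^{(r)},i)\}$. The combinatorial observation to make is that pairs $(Z,M)$ with $Z=\{r_1<\dots<r_i\}\in\mathcal{S}_i$ and $M\in\mathcal{Y}(\lambda^{(r)},i)$ are in natural bijection with $L\in\mathcal{Y}(\lambda^{(r)},n)$: given $(Z,M)$, replace each entry $a\in\{1,\dots,i\}$ occurring in $M$ by $r_a$; this gives a filling $L$ with entries from $Z\subseteq[n]$, still column- and row-increasing because $a\mapsto r_a$ is order-preserving, and conversely the entries of $L$ determine $Z$ (the underlying set) and $M$ (standardize). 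So define $v_L:=h_Z^n\otimes w_M$ under this bijection; this gives the claimed basis indexed by $\mathcal{Y}(\lambda^{(r)},n)$.

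Next I would compute the action of each generator on $v_L=h_Z^n\otimes w_M$. The key computational inputs are: (i) left multiplication of $h_Z^n$ by a monomial matrix $\tau\in C_r\wr R_n$ equals $h_{Z'}^n\cdot g$ for a suitable $Z'$ and $g\in C_r\wr S_i$ when $\tau h_Z^n$ still has rank $i$, and $0$ otherwise — this is essentially bookkeeping with where columns $1,\dots,i$ land; and (ii) the action of $s_j$, $P$, $Q$ then gets moved across the tensor product using Theorem~\ref{thm:gensym}(\ref{thm:gensym.2}). Concretely: $P=f_1$-type idempotent kills $h_Z^n$ exactly when $1\in Z$, i.e. when $1\in L$, giving the $P$-formula. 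For $Q$, left multiplication by $Q$ multiplies the entry in column containing row index $1$; if $1\notin Z$ it commutes as identity, if $1\in Z$ at position $k$ (meaning $r_k=$ the image of box-entry... careful: $1\in L$ at position $k$ in the wreath sense) it contributes $\xi^{k-1}$ exactly as in \eqref{al:symQ}, yielding the $Q$-formula. For $s_j$: if neither $j$ nor $j+1$ lies in $Z$ then $s_j$ fixes $h_Z^n$ and acts trivially on $w_M$, giving $v_L$; if exactly one of $j,j+1$ lies in $Z$, then $s_j h_Z^n=h_{s_jZ}^n$ with no group factor, and $s_jL$ is the corresponding tableau, giving $v_{s_jL}$ (and this is always a valid tableau in these two cases); if both $j,j+1\in Z$, then $s_j$ permutes two columns of $h_Z^n$ and $s_j h_Z^n=h_Z^n\cdot s_{j'}$ for the appropriate transposition $s_{j'}$ in $C_r\wr S_i$, so $s_j(h_Z^n\otimes w_M)=h_Z^n\otimes s_{j'}w_M$ and we invoke \eqref{al:symsi}, which produces either $v_{s_jL}$ or $a_L(j)v_L+(1+a_L(j))v_{s_jL}$ according to the sign condition — noting $\sgn_L$ and $\ct(L(\cdot))$ are computed from $L$ but agree with $\sgn_M$ and $\ct(M(\cdot))$ under the bijection, since both signs and contents depend only on the box positions, not on the numerical entries.

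The main obstacle, and the step deserving the most care, is the case analysis for $s_j$ when exactly one of $j,j+1$ lies in $Z$: one must verify that $s_j h_Z^n$ indeed still has rank $i$ (so the action is not zero) and equals $h_{s_jZ}^n$ on the nose with trivial group part, and that $s_jL\in\mathcal{Y}(\lambda^{(r)},n)$ automatically in this situation — i.e. that swapping a used index with an unused adjacent one never violates the increasing condition. This is true but needs the observation that if, say, $j\in L$ and $j+1\notin L$, then replacing $j$ by $j+1$ keeps all row/column inequalities strict because the neighbor of $j$ in its row/column (if any) was an entry of $L$ hence $\neq j+1$, hence $\geq j+2$ above/right or $\leq j-1$ below/left. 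Once this combinatorial lemma is in place, the rest is a direct translation through the tensor product of the already-established formulas \eqref{al:symsi}--\eqref{al:symQ}, so no genuinely new representation-theoretic content is required.
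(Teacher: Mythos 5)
Your proposal is correct and follows essentially the same route as the paper: part (a) via the equivalence of Lemma~\ref{lm:Requi} together with Theorem~\ref{thm:gensym}(\ref{thm:gensym.1}) (the paper invokes the same general theory), and part (b) via the bijection $(Z,L')\leftrightarrow L$ and transporting the actions of $P$, $Q$, $s_j$ across $v_L=h_Z^n\otimes w_{L'}$ using the formulas of Theorem~\ref{thm:gensym}(\ref{thm:gensym.2}), with the same three-way case analysis for $s_j$. The only cosmetic slip is that in your combinatorial check the directions are interchanged (the neighbours of $j$ to the right/below are $\geq j+2$, those to the left/above are $\leq j-1$); in any case this standardness of $s_jL$ comes for free, as in the paper, because $s_jL$ is exactly the tableau corresponding to the pair $(s_jZ,L')$ under the bijection.
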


\begin{proof}
The first claim follows directly from the general  theory, see \cite{Steinberg},
so we only prove the second claim. (One can also see it by combining Lemma \ref{lm:Requi} and Theorem \ref{thm:constirre}.)
Recall that, as a right $\FF[C_{r}\wr S_i]$-module, $\FF\mathbb{L}_i^n$ has a basis 
consisting of matrices  of the form $h_Z^n$, where $Z\in \mathcal{S}_i$. 

Fix $\lambda^{(r)}\in \Lambda_r(i)$.
For $Z=\{r_1<r_2< \cdots <r_i\}\subseteq [n]$ and $L'\in \mathcal{Y}(\lambda^{(r)},i)$, 
define $L\in\mathcal{Y}(\lambda^{(r)},n)$ by replacing $l\in L$ by $r_l$, 
for all $1\leq l\leq i$. Conversely, given $L\in\mathcal{Y}(\lambda^{(r)},n)$,
let $Z$ be the set of the entries in $L$. We can  arrange these entries in the increasing order 
to get $Z=\{r_1<r_2<\cdots<r_i\}$.  Now, replacing $r_l\in L$ by 
$l$, we obtain an element $L'\in \mathcal{Y}(\lambda^{(r)},i)$. Then 
\begin{displaymath}
\{v_{L}:=h_Z^n\otimes w_{L'}\mid Z\in \mathcal{S}_i \text{ and } 
L'\in\mathcal{Y}(\lambda^{(r)},i) \}=\{v_{L}\mid L\in\mathcal{Y}(\lambda^{(r)},n)\} 
\end{displaymath}
is, by construction, a basis of $V^{n}_{\lambda^{(r)}}$.

Next we  compute the action of $s_{j}$. For $1\leq j\leq n-1$, we have:
\begin{displaymath}
s_{j}  v_{L}=s_{j}(h_{Z}^n\otimes w_{L'})=s_{j}h_{Z}^n\otimes w_{L'}=
h_{s_{j}(Z)}^n\otimes (h_{s_{j}(Z)}^n)^{tr} s_{j}h_{Z}^nw_{L'},
\end{displaymath}
where $(h_{s_{j}(Z)}^n)^{tr}$ denotes the transpose of $h_{s_j(Z)}^n$.

{\bf Case~1.} Suppose that we have $j\notin L$ or $j+1\notin L$. 
This means that $j\notin Z$ or $j+1\notin Z$,  respectively. 
In this case, $(h_{s_{j}(Z)}^n)^{tr}s_{j}h_{Z}^n=f_i\in C_r\wr S_i$, which is the identity of $C_r\wr S_i$. Therefore, we have
$$h_{s_{j}(Z)}^n\otimes (h_{s_{j}(Z)}^n)^{tr}s_{j}h_{Z}^nw_{L'}=h_{s_{j}(Z)}^n\otimes w_{L'}=v_{s_{j}L}.$$
Furthermore, if both $j\notin L$ and $j+1\notin L$, then $v_{s_{j}L}=v_{L}$.
This completes the description of the action of $s_j$ for the first three cases in \eqref{al:rooksi}.

{\bf Case~2.} Suppose that $j\in L$ and $j+1\in L$ or, equivalently, $j\in Z$ and $j+1\in Z$. 
Then $(h_{s_{j}(Z)}^n)^{tr}s_{j}h_{Z}^n$ is a $(j,j+1)$ transposition in $C_r\wr S_i$. Then the remaining two cases in \eqref{al:rooksi}
follow from \eqref{al:symsi}.

To compute the action of $P$, we start with $P  v_{L}=P(h_{Z}^n\otimes w_{L'})=Ph_{Z}^n\otimes w_{L'}$. 
Note that $Ph_{Z}^n\in \mathbb{L}_{i}^n$ if and only if $1\notin Z$. In particular, we have
\begin{align*}
Ph_{Z}^n=\begin{cases}
h_{Z}^n, &\text{ if } 1\notin Z;\\
0, &\text{ otherwise }.
\end{cases}
\end{align*}
This implies the formula for the action of $P$ in \eqref{al:P-Q}.	

The action of $Q$ in \eqref{al:P-Q} can be computed similarly using \eqref{al:symQ}.
\end{proof}

\subsection{The restriction functor}\label{s3.2}

As we already mentioned, the set consisting of all matrices in $C_r\wr R_{n}$ whose 
$(n,n)$-th entry is $1$ is a submonoid of $C_r\wr R_n$ and it is isomorphic to $C_r\wr R_{n-1}$. 
This defines an embedding $C_r\wr R_{n-1}\subset C_{r}\wr R_n$. Similarly, we have $C_r\wr S_{n-1}\subset C_r\wr S_n$.

Denote by $\mathcal{F}$ the functor
\begin{displaymath}
\mathcal{F}:\bigoplus_{i=0}^{n} \FF[C_{r}\wr S_i]\Mod\to 
\bigoplus_{j=0}^{n-1} \FF[C_{r}\wr S_j]\Mod 
\end{displaymath}
given by
\begin{align*}
\mathcal{F}|_{{K[C_r\wr S_i]\Mod}}=
\begin{cases}
\mathrm{Id}_{\FF[C_{r}\wr S_0]\Mod}, & \text{ if } i=0;\\
& \\
\Res^{\FF[C_{r}\wr S_{n}]}_{\FF[C_{r}\wr S_{n-1}]}, & \text{ if } i=n;\\
& \\
\mathrm{Id}_{\FF[C_{r}\wr S_i]\Mod}\oplus\Res^{\FF[C_{r}\wr S_{i}]}_{\FF[C_{r}\wr S_{i-1}]}, & \text{ if } 0<i<n.
\end{cases}
\end{align*}

\begin{theorem}\label{thm:res}
 The following diagram 
\begin{displaymath}
\xymatrixcolsep{9pc}
\xymatrix{	
\displaystyle{\bigoplus_{i=0}^{n}} \FF[C_{r}\wr S_i]\Mod\ar[d]^{\mathcal{F}} 
\ar[r]<1pt>^{\displaystyle{\bigoplus_{i=0}^{n}} \mathcal{L}_i^n}
&\FF[C_{r}\wr R_n]\Mod\ar[d]^{\Res^{\FF[C_{r}\wr R_n]}_{\FF[C_{r}\wr R_{n-1}]}}\\
\displaystyle{\bigoplus_{j=0}^{n-1}} \FF[C_{r}\wr S_j]\Mod 
\ar[r]<1pt>^{\displaystyle{\bigoplus_{j=0}^{n-1}} \mathcal{L}_j^{n-1}}
&\FF[C_{r}\wr R_{n-1}]\Mod
}
\end{displaymath}
commutes up to a natural isomorphism of functors.
\end{theorem}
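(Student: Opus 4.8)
The plan is to reduce the commutativity of the diagram to a bimodule computation. By Lemma~\ref{lm:Requi}, the horizontal functors are equivalences, so it suffices to produce, for each $0\le i\le n$, a natural isomorphism between the two composites restricted to the summand $\FF[C_r\wr S_i]\Mod$. Each composite is given by tensoring with a $(\FF[C_r\wr R_{n-1}],\FF[C_r\wr S_i])$-bimodule, so the whole statement amounts to an isomorphism of bimodules
\begin{displaymath}
\Res^{\FF[C_r\wr R_n]}_{\FF[C_r\wr R_{n-1}]}\big(\FF\LL_i^n\big)\;\cong\;
\bigoplus_{j}\FF\LL_j^{n-1}\otimes_{\FF[C_r\wr S_j]}\big(\mathcal{F}|_{i\to j}\text{-part}\big),
\end{displaymath}
compatibly in $i$; here the right-hand side is $\FF\LL_i^{n-1}\oplus\big(\FF\LL_{i-1}^{n-1}\otimes_{\FF[C_r\wr S_{i-1}]}\FF[C_r\wr S_i]\big)$ for $0<i<n$, is $\FF\LL_n^{n-1}\otimes_{\FF[C_r\wr S_{n-1}]}\FF[C_r\wr S_n]$ for $i=n$, and is $\FF\LL_0^{n-1}$ for $i=0$. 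The first step, then, is to unwind both composite functors into this explicit bimodule form using the definitions of $\mathcal{L}_i^n$, $\mathcal{F}$, and the standard adjunction $\Res\circ\Ind$ expressed as tensoring with a bimodule.

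The heart of the argument is a decomposition of $\FF\LL_i^n$ as a left $\FF[C_r\wr R_{n-1}]$-module. Recall that $\FF\LL_i^n$ has the $\FF[C_r\wr S_i]$-basis $\{h_Z^n\mid Z\in\mathcal S_i\}$. The key observation is that the action of $C_r\wr R_{n-1}$ (the matrices fixing the $(n,n)$ coordinate, acting on the first $n-1$ rows) permutes and partially annihilates these basis elements according to whether $n\in Z$ or not. So I would split $\mathcal S_i=\{Z\mid n\notin Z\}\sqcup\{Z\mid n\in Z\}$. On the first part, the matrices $h_Z^n$ with $n\notin Z$ span a sub-bimodule isomorphic to $\FF\LL_i^{n-1}$ (identifying $h_Z^n$ with $h_Z^{n-1}$), because the left $C_r\wr R_{n-1}$-action never moves mass into row $n$ and the formula for the action matches. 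On the second part, the span of the $h_Z^n$ with $n\in Z$ is a left $\FF[C_r\wr R_{n-1}]$-submodule (an element of $C_r\wr R_{n-1}$ either keeps row $n$ occupied or sends the matrix out of $\LL_i^n$, i.e.\ to $0$), and one checks it is isomorphic as a $(\FF[C_r\wr R_{n-1}],\FF[C_r\wr S_i])$-bimodule to $\FF\LL_{i-1}^{n-1}\otimes_{\FF[C_r\wr S_{i-1}]}\FF[C_r\wr S_i]$: the condition $n\in Z$ forces the $i$-th column of $h_Z^n$ to be the elementary vector $e_n$, the remaining $i-1$ columns range over $(i-1)$-subsets of $[n-1]$, and the right $C_r\wr S_i$-action freely induces from the $C_r\wr S_{i-1}$ fixing the last coordinate — matching the induced bimodule exactly. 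The boundary cases $i=0$ (where $\LL_0^n=\{0\text{-matrix}\}$, only the ``$n\notin Z$'' part survives) and $i=n$ (where $n\in Z$ always, only the induced part survives, and $\FF\LL_{n-1}^{n-1}=\FF[C_r\wr S_{n-1}]$ gives plain induction) fall out of the same analysis.

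Finally I would assemble these summand-wise isomorphisms into a natural isomorphism of functors: naturality is automatic since everything is realized by tensoring with bimodule maps, and the direct sum over $i$ of the two split pieces reproduces exactly the bimodule attached to $\mathcal{F}$ followed by $\bigoplus_j\mathcal{L}_j^{n-1}$, after re-indexing (the ``$n\notin Z$'' piece of $\FF\LL_i^n$ contributes the $\Id$-summand of $\mathcal F$ on $\FF[C_r\wr S_i]\Mod$, and the ``$n\in Z$'' piece contributes the $\Res$-summand, which after the equivalence lands in the $\FF[C_r\wr S_{i-1}]$-block via the standard identity $\Res\circ\Ind$ on $\LL_{i-1}^{n-1}$). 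The main obstacle I anticipate is purely bookkeeping: verifying that the left $C_r\wr R_{n-1}$-action on the ``$n\in Z$'' block genuinely matches the left action on the induced bimodule $\FF\LL_{i-1}^{n-1}\otimes_{\FF[C_r\wr S_{i-1}]}\FF[C_r\wr S_i]$ — one must track carefully how a matrix in $C_r\wr R_{n-1}$ acts on $h_Z^n$ when $n\in Z$, rewrite the result in the chosen $\FF[C_r\wr S_i]$-basis, and confirm the ``correction factor'' $(h^n_{s_j(Z)})^{tr}\sigma h^n_Z\in C_r\wr S_i$ lands in the parabolic $C_r\wr S_{i-1}$ as needed. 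This is the same style of computation already carried out in the proof of Theorem~\ref{thm:constirre}, so it is routine but needs care with the wreath-product twist coming from the nonzero entries in $C_r$.
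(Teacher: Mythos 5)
Your proposal is correct and follows essentially the same route as the paper: the paper's proof also splits the basis $\{h_Z^n\}$ of $\FF\LL_i^n$ according to whether $n\in Z$ or $n\notin Z$, identifies the two pieces with $\mathcal{L}_{i-1}^{n-1}$ applied to the restriction and with $\mathcal{L}_i^{n-1}$ respectively, and treats $i=0$ and $i=n$ as separate easy cases; your only deviation is packaging the same decomposition at the level of the $(\FF[C_r\wr R_{n-1}],\FF[C_r\wr S_i])$-bimodule $\FF\LL_i^n$ rather than summand-wise on each $\mathcal{L}_i^n(V)$, which makes naturality automatic (and note the harmless typo $\FF\LL_{n}^{n-1}$, which should read $\FF\LL_{n-1}^{n-1}$).
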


\begin{proof}
Recall that, as a right $\FF[C_{r}\wr S_i]$-module, $\FF\mathbb{L}_i^n$ has a basis 
 consisting of matrices of the form $h_Z^n$, where $Z\in\mathcal{S}_i$. We need to consider several cases.

{\bf Case~1.} Assume $0<i<n$ and let $V\in \FF[C_{r}\wr S_i]\Mod$. Then the linear span
of all $h_{Z}^n\otimes V$, where $Z\in\mathcal{S}_i$ is such that $n\notin Z$,
is a subspace of $\mathcal{L}_{i}^n(V)$ which is stable under the action of $\CC[C_{r}\wr R_{n-1}]$.
It is easy to see that this $\CC[C_{r}\wr R_{n-1}]$-module is isomorphic to $\mathcal{L}^{n-1}_{i}(V)$. 

Similarly, the linear span
of all $h_{Z}^n\otimes V$, where $Z\in\mathcal{S}_i$ is such that $n\in Z$,
is a subspace of $\mathcal{L}_{i}^n(V)$ which is stable under the action of $\CC[C_{r}\wr R_{n-1}]$.
It is easy to see that this $\CC[C_{r}\wr R_{n-1}]$-module is isomorphic to $\mathcal{L}^{n-1}_{i-1}(V)$. 

{\bf Case~2.} In the case $i=n$, we have that  $\Bbbk\mathbb{L}_{n}^{n}$ is the right regular 
$\CC[C_{r}\wr S_n]$-module and the necessary claim follows from the construction.

{\bf Case~3.} In the case $i=0$, both $\Bbbk\mathbb{L}_{0}^{n}$ and the group algebra of $C_{r}\wr S_0$ 
are isomorphic to $\Bbbk$ and the claim is trivial. 
\end{proof}

Now we give some applications of Theorem \ref{thm:res}.

For $\lambda\in \mathcal{P}$, an {\em outer corner} of $[\lambda]$ 
(a.k.a. {\em removable node}) is a box $(i,j)\in[\lambda]$ such that 
$[\lambda]\setminus\{(i,j)\}$ is still a Young diagram. 
For $\lambda^{(r)}\in\Lambda_{r}(\leq\hspace{-1mm} n)$, we define 
$(\lambda^{(r)})^{-,=}$ as the set consisting of $\lambda^{(r)}$ and all
elements in $\Lambda_{r}(\leq\hspace{-1mm} n-1)$ which are obtained from $\lambda^{(r)}$ by removing an outer corner 
in one of the Young diagrams which constitute $\lambda^{(r)}$. 
Further, let $(\lambda^{r})^{-}:=(\lambda^{r})^{-,=}\setminus\{\lambda^{(r)}\}$.

\begin{corollary}[Branching rule over $\mathbb{C}$]\label{coro:branching} 
For $\lambda^{(r)}\in\Lambda_r(\leq n)$, we have:
\begin{displaymath}
\Res^{\CC[C_r\wr R_n]}_{\CC[C_r\wr R_{n-1}]}(V^{n}_{\lambda^{(r)}})\cong 
\begin{cases}
\displaystyle{\bigoplus_{\mu^{(r)}\in(\lambda^{(r)})^{-,=}}} V^{n-1}_{\mu^{(r)}}, 
& \text{ if } \lambda^{(r)}\in\Lambda_{r}(\leq\hspace{-1mm} n-1);\\
\displaystyle{\bigoplus_{\mu^{(r)}\in(\lambda^{(r)})^{-}}} V^{n-1}_{\mu^{(r)}}, 
& \text{ if }\lambda^{(r)}\in\Lambda_{r}(n).
\end{cases}
\end{displaymath}
\end{corollary}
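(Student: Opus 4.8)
The plan is to deduce the branching rule directly from Theorem~\ref{thm:res} combined with the known branching rule for the generalized symmetric groups. First I would recall that, under the equivalence $\bigoplus_{i=0}^n\mathcal{L}_i^n$ of Lemma~\ref{lm:Requi}, the simple module $V^n_{\lambda^{(r)}}$ for $\lambda^{(r)}\in\Lambda_r(i)$ corresponds to the simple module $W^i_{\lambda^{(r)}}$ sitting in the $i$-th summand $\FF[C_r\wr S_i]\Mod$. Theorem~\ref{thm:res} then says that computing $\Res^{\CC[C_r\wr R_n]}_{\CC[C_r\wr R_{n-1}]}(V^n_{\lambda^{(r)}})$ amounts to applying the functor $\mathcal{F}$ to $W^i_{\lambda^{(r)}}$ and translating the result back through $\bigoplus_{j=0}^{n-1}\mathcal{L}_j^{n-1}$.

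The second ingredient is the classical branching rule for generalized symmetric groups: for $\lambda^{(r)}\in\Lambda_r(i)$ with $i\ge 1$,
\begin{displaymath}
\Res^{\CC[C_r\wr S_i]}_{\CC[C_r\wr S_{i-1}]}(W^i_{\lambda^{(r)}})\cong\bigoplus_{\mu^{(r)}\in(\lambda^{(r)})^{-}}W^{i-1}_{\mu^{(r)}},
\end{displaymath}
where $(\lambda^{(r)})^{-}$ is exactly the set of multipartitions obtained by removing one outer corner from one of the constituent Young diagrams; this is standard and can be cited from \cite{HR98} or extracted from the seminormal formulas in Theorem~\ref{thm:gensym} in the same way one does it for symmetric groups. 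Now I would split into the two cases of the statement. If $\lambda^{(r)}\in\Lambda_r(i)$ with $i\le n-1$, then by the definition of $\mathcal{F}$ on the middle block ($0<i<n$), we get $\mathcal{F}(W^i_{\lambda^{(r)}})=W^i_{\lambda^{(r)}}\oplus\Res^{\CC[C_r\wr S_i]}_{\CC[C_r\wr S_{i-1}]}(W^i_{\lambda^{(r)}})$, which via the equivalence and the group branching rule corresponds precisely to $\bigoplus_{\mu^{(r)}\in(\lambda^{(r)})^{-,=}}V^{n-1}_{\mu^{(r)}}$ (the summand $\lambda^{(r)}$ itself coming from the identity part, the proper removals from the restriction part). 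The edge cases $i=0$ and $i=n$ are handled by the corresponding branches of $\mathcal{F}$: for $i=0$ the identity functor gives back $V^{n-1}_{\varnothing^{(r)}}$, consistent with $(\varnothing^{(r)})^{-,=}=\{\varnothing^{(r)}\}$; for $i=n$ we are in the second case of the statement, $\mathcal{F}$ is pure restriction $\Res^{\CC[C_r\wr S_n]}_{\CC[C_r\wr S_{n-1}]}$, and the group branching rule yields exactly $\bigoplus_{\mu^{(r)}\in(\lambda^{(r)})^{-}}V^{n-1}_{\mu^{(r)}}$.

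The only real point requiring care — and the step I would treat as the main obstacle — is verifying that the combinatorial indexing is transported correctly under the equivalence: one must check that the simple $\FF[C_r\wr S_{i-1}]$-module labelled $\mu^{(r)}$ appearing in the group restriction gets sent by $\mathcal{L}^{n-1}_{i-1}$ to $V^{n-1}_{\mu^{(r)}}$ (and not to some relabelled module), and likewise that the ``identity'' copy lands in $\mathcal{L}^{n-1}_i$ as $V^{n-1}_{\lambda^{(r)}}$. This is really just bookkeeping with Theorem~\ref{thm:constirre}\eqref{thm:constirre.1}, which fixes the labelling convention $V^{m}_{\mu^{(r)}}=\CC\LL_j^m\otimes_{\CC[C_r\wr S_j]}W^j_{\mu^{(r)}}$ for $\mu^{(r)}\in\Lambda_r(j)$, so it suffices to observe that the isomorphisms in Cases~1--3 of the proof of Theorem~\ref{thm:res} are the identity on the group-module factor. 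Once that is in place, assembling the two cases gives the stated formula, and I would note in passing that the whole argument works verbatim over a positive-characteristic field $\FF$ provided one substitutes the modular branching rule for $\FF[C_r\wr S_i]$ for the characteristic-zero one, yielding the modular branching rule promised in the introduction.
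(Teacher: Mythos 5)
Your proposal is correct and takes essentially the same route as the paper: the paper's own proof is a one-line deduction from Theorem~\ref{thm:res} together with the branching rule for $\CC[C_r\wr S_{i-1}]\subset\CC[C_r\wr S_i]$, which is precisely the argument you give. The additional bookkeeping you include (the case split according to the definition of $\mathcal{F}$, the labelling check through the equivalence, and the edge cases $i=0$ and $i=n$) simply makes explicit what the paper leaves implicit.
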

\begin{proof}
	It is a consequence of the branching rule for $\CC[C_r\wr S_{n-1}]\subset\CC[C_r\wr S_n]$ and Theorem \ref{thm:res}.
	\end{proof}

The Bratteli diagram of the tower \eqref{al:tower1} is an undirected graph whose 
vertices at the level $n$ are given by the elements of 
$\Lambda_r(\leq\hspace{-1mm}n)$.  For two vertices
$\lambda^{(r)}\in\Lambda_r(\leq\hspace{-1mm} n)$ and 
$\mu^{(r)}\in\Lambda_r(\leq\hspace{-1mm} n-1)$, 
there is an edge between $\mu^{(r)}$ and $\lambda^{(r)}$ if and only if 
$\mu^{(r)}\in(\lambda^{(r)}){^{-,=}}$, cf. \cite[Page~584]{OV}. A path from 
the vertex $(\varnothing,\varnothing,\ldots,\varnothing)$, at the level $m=0$,
to the vertex $\lambda^{(r)}$, at the level $m=n$, in the Bratteli diagram 
is a list $(\nu^{(r)}_{0},\nu^{(r)}_1,\ldots,\nu^{(r)}_n=\lambda^{(r)})$ of 
vertices such that the vertex $\nu^{(r)}_i$ is at the level $m=i$ for 
$0\leq i\leq n$, and there is an edge between $\nu^{(r)}_j$ and $\nu^{(r)}_{j+1}$, 
for $0\leq j\leq n-1$. By construction, the Bratteli diagram encodes the branching 
rule in Corollary~\ref{coro:branching}. In order to exhibit the Bratteli diagram, often it is more intuitive to consider the Young diagram corresponding to a partition and in the below we follow this.

In Figure~\ref{fig:BDRM}, we illustrate the branching rule in the case $r=2$
by the corresponding Bratteli diagram for the tower of generalized rook monoid algebras, 
up to the second level.

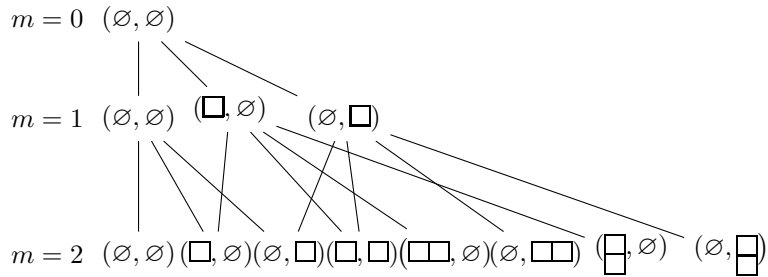
\begin{figure}[ht]
\begin{tikzpicture}[scale=0.6]
    \node (a) at (-2,0) {$m=0$};
    \node (b) at (-2,-2.2) {$m=1$};
    \node (c) at (-2,-5.2) {$m=2$};
    \node (1) at (0,0) {$(\varnothing,\varnothing)$};
    \node (1') at (0,-2.2) {$(\varnothing,\varnothing)$};
    \node (2) at  (2,-2)  {$ (\ytableausetup{boxsize=0.7em}{\begin{ytableau}
    \empty \\
    \end{ytableau}},\varnothing)$};
    \node (2'') at  (4.5,-2.2) {$ (\varnothing, \ytableausetup{boxsize=0.7em}\begin{ytableau}
    \empty \\
    \end{ytableau})$};
    \node (2') at (0,-5.2) {$(\varnothing,\varnothing)$};
    \node (3) at  (1.7,-5.2) {$(\ytableausetup{boxsize=0.7em}\begin{ytableau}
    \empty \\
    \end{ytableau},\varnothing)$};
    \node (3') at  (3.3,-5.2) {$ (\varnothing, \ytableausetup{boxsize=0.7em}\begin{ytableau}
    \empty \\
    \end{ytableau})$};
    
    \node (3'') at (4.9,-5.2) {$ (\ytableausetup{boxsize=0.7em}\begin{ytableau}
    \empty \\
    \end{ytableau},   \ytableausetup{boxsize=0.7em} \begin{ytableau}
	\empty \\
	\end{ytableau})$};
	\node (4) at (6.7,-5.2) {$ \big(\ytableausetup{boxsize=0.7em}\begin{ytableau}
	\empty & \empty \\
	\end{ytableau},\varnothing)$};
	\node (4') at (8.7,-5.2) {$(\varnothing, \ytableausetup{boxsize=0.7em}\begin{ytableau}
	\empty & \empty \\
	\end{ytableau})$};
	\node (5) at (10.8,-5.2) {$ (\ytableausetup{boxsize=0.7em}\begin{ytableau}
	\empty \\
	\empty \\
	\end{ytableau},\varnothing)$};
				
	\node (5') at (13,-5.2) {$(\varnothing, \begin{ytableau}
	\empty \\
	\empty \\
	\end{ytableau}\big)$};
	\draw (1) -- (2'');
	\draw (1) -- (1') -- (2');
	\draw (1) -- (2) -- (3);
	\draw (2) -- (4);
	\draw (2) -- (5);
	\draw (1') -- (3);
	\draw (1') -- (3');
	\draw (2'')--(3'');
	\draw (2) --(3'');
	\draw (2'')--(3');
	\draw (2'')--(4');
	\draw (2'')--(5');
\end{tikzpicture}
\caption{Bratteli diagram for the tower of generalized rook monoid algebras in the case $r=2$, up to level $2$}
\label{fig:BDRM}
\end{figure} 

Observe that the branching rule in Corollary~\ref{coro:branching} is 
multiplicity-free. Therefore there is a basis 
of $V^{n}_{\lambda^{(r)}}$, defined uniquely up to rescaling of its elements,
which is indexed by the paths from the vertex at the 
level $m=0$ to a vertex, say $\lambda^{(r)}$, at the level $m=n$ in the 
Bratteli diagram, see e.g. \cite[Page~585]{OV}, where such a basis is 
called a {\em Gelfand--Zeitlin basis}. 
We note that the set of all such paths is in a 
bijective correspondence with $\mathcal{Y}(\lambda^{(r)},n)$. 
From Theorem~\ref{thm:JM} which will be proved later, it follows that 
the basis constructed in Theorem~\ref{thm:constirre} with a Gelfand--Zeitlin 
basis of $V^n_{\lambda^{(r)}}$. For $L\in\mathcal{Y}(\lambda^{(r)},n)$, 
the vector $v_{L}$ given in the part of Theorem \ref{thm:constirre}  
is called a {\em Gelfand--Zeitlin basis vector}.

Now we outline the modular branching rule for generalized rook monoids as a consequence of Theorem \ref{thm:res} and the corresponding rule for generalized symmetric groups. Let $p$ be a prime number and assume that 
$\Bbbk$ is of characteristic $p$.
Recall that a partition is called {\em $p$-regular} if it does not have more than $p-1$ parts that are equal.   
Let $\Lambda^{p}(n)$ denote the set of all $p$-regular partitions of $n$. It is known that the simple
modules of $\FF[S_n]$ are indexed by the elements of $\Lambda^{p}(n)$, see \cite{James76} or 
\cite[Theorem 11.5]{Jbook}. For $\lambda\in\Lambda^{p}(n)$, let $D^{\lambda}$ denote the 
corresponding simple $\FF[S_n]$-module.  In \cite{K1,K2} one can find a modular branching 
rule for symmetric groups, that is a description of the socle of $\Res^{\FF[S_n]}_{\FF[S_{n-1}]}(D^{\lambda})$.
In particular, this asserts that  the socle of $\Res^{\FF[S_n]}_{\FF[S_{n-1}]}(D^{\lambda})$ 
is multiplicity-free. A classification of simple modules over $\FF[C_r\wr S_n]$ can be found in \cite{JK}. 
A modular branching rule result for generalized symmetric group, under the assumption that $r$ is non-zero in $\FF$,  was obtained in \cite{ShuJ}. 
Combining these results with Theorem~\ref{thm:res}, one obtains a modular branching rule for the 
generalized rook monoid algebras, in particular, it follows that this branching is multiplicity-free.

\subsection {Gelfand model for $\CC[C_r\wr R_n]$}\label{s3.3}
    
A {\em Gelfand model} of a finite-dimensional semisimple algebra $A$ is a 
multiplicity-free additive generator of $A$-mod, see \cite{PR,MV,HR} for further details.
	 
For $\sigma\in C_r\wr S_n$, let $\inv(\sigma)$ be the set consisting of all 
$(i,j)\in[n]\times[n]$ such that $i<j$ and the non-zero entry in 
the $i$-th column of $\sigma$ appears  in a later row than the non-zero entry in the $j$-th column. 
Similarly, for $\sigma\in C_r\wr S_n$, let $\pa(\sigma)$ be the set consisting of all pairs 
$(i,j)\in[n]\times[n]$ such that $i<j$,
the non-zero entry in the $i$-th column of $\sigma$ appears in row $j$ and 
the non-zero entry in the $j$-th column appears in row $i$. For a matrix $A$, let $A^{tr}$ denote its transpose. 
Let $\mathcal{W}_n$ be the $\CC$-span of $\mathcal{J}=\{\sigma\in C_r\wr S_n\mid \sigma=\sigma^{tr}\}$.
For $\omega\in C_r\wr S_n$ and $\sigma\in \mathcal{J}$, set
\begin{align*}
\omega  \sigma=(-1)^{|\inv(\omega)\cap\pa(\sigma)|}\omega \sigma \omega^{-1}.
\end{align*}
In \cite{APR}, it is shown that $\mathcal{W}_n$ is a Gelfand model for $\CC[C_r\wr S_n]$, 
see also \cite[Section 2.7]{VS}.
	 
Let $\mathcal{V}$ be the $\CC$-span of $\mathcal{I}=\{M\in C_r\wr R_n\mid M=M^{tr}\}$.	 
For $M\in C_r\wr R_n$, let $\pa(M)$ denote the set consisting of all pairs 
$(i,j)\in[n]\times[n]$ such that $i<j$,  the non-zero entry in the 
$i$-th column of $M$ appears in row $j$ and the non-zero entry in the 
$j$-th column of $M$ appears in row $i$. 
We define the action of the generators of $\CC[C_r\wr R_n]$ on $ \mathcal{V}$ as follows: 
for $1\leq i\leq n-1$, set
\begin{align}\label{al:actiongel}
	 	s_{i}  M=\begin{cases}
	 		-s_{i}Ms_{i}, & \text{ if } (i,i+1)\in \pa(M);\\
	 		s_{i}Ms_{i}, & \text{ otherwise};
	 		\end{cases}
\end{align}
\begin{align}\label{al:actionPQ}
 Q  M=QMQ^{-1},	 &&	P  M=\begin{cases}
 			M, & \text{ if the first row of } M \text{ is zero};\\
 			0, & \text{ otherwise}.
 		\end{cases}
\end{align}
    
\begin{proposition}\label{prop:gel}
The module $\mathcal{V}$ is a Gelfand model for $\CC[C_r\wr R_n]$.
\end{proposition}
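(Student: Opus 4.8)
The plan is to deduce this from the corresponding statement for generalized symmetric groups. Recall from \cite{APR} that $\mathcal{W}_i$ (the span of the symmetric elements of $C_r\wr S_i$, with the action displayed above) is a Gelfand model for $\CC[C_r\wr S_i]$ for every $i\geq 0$; in particular $\bigoplus_{i=0}^n\mathcal{W}_i$ is a multiplicity-free additive generator of $\bigoplus_{i=0}^n\CC[C_r\wr S_i]\Mod$. Since $\bigoplus_{i=0}^n\mathcal{L}_i^n$ is an equivalence of categories by Lemma~\ref{lm:Requi} (which also shows $\CC[C_r\wr R_n]$ is semisimple, so that the notion applies), it will suffice to prove that $\mathcal{V}\cong\bigoplus_{i=0}^n\mathcal{L}_i^n(\mathcal{W}_i)$ as $\CC[C_r\wr R_n]$-modules: the image of a multiplicity-free additive generator under an equivalence is again one, i.e.\ a Gelfand model.

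To build this isomorphism I would stratify $\mathcal{I}$ by \emph{rank}: let $\mathcal{I}_i\subseteq\mathcal{I}$ consist of the symmetric matrices with exactly $i$ nonzero entries and set $\mathcal{V}_i:=\CC\mathcal{I}_i$, so that $\mathcal{V}=\bigoplus_{i=0}^n\mathcal{V}_i$ as vector spaces. A symmetric matrix $M$ of rank $i$ is determined by the set $Z\subseteq[n]$ of its active rows (equivalently, active columns), which has size $i$, together with the submatrix of $M$ on the rows and columns in $Z$, which is an arbitrary symmetric element of $C_r\wr S_Z\cong C_r\wr S_i$; placing such a submatrix at the positions indexed by $Z$ (after the order-preserving identification of $Z$ with $[i]$, as in the proof of Theorem~\ref{thm:constirre}) gives a bijection between $\mathcal{S}_i\times\{\text{symmetric elements of }C_r\wr S_i\}$ and $\mathcal{I}_i$. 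Since $\{h_Z^n\mid Z\in\mathcal{S}_i\}$ is a free right $\FF[C_r\wr S_i]$-basis of $\FF\mathbb{L}_i^n$, this bijection extends to a well-defined $\CC$-linear isomorphism $\Phi_i\colon\mathcal{L}_i^n(\mathcal{W}_i)=\CC\mathbb{L}_i^n\otimes_{\CC[C_r\wr S_i]}\mathcal{W}_i\to\mathcal{V}_i$ sending $h_Z^n\otimes\sigma$ to the matrix obtained by placing $\sigma$ at the positions indexed by $Z$.

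The main work is to verify that $\Phi:=\bigoplus_i\Phi_i$ intertwines the genuine $\CC[C_r\wr R_n]$-action on $\bigoplus_i\mathcal{L}_i^n(\mathcal{W}_i)$ with the action on $\mathcal{V}$ prescribed by \eqref{al:actiongel}--\eqref{al:actionPQ}; this will simultaneously confirm that those formulas define a module. First one checks that the prescribed operators preserve each $\mathcal{V}_i$: $P$ sends $M\in\mathcal{I}_i$ to $M$ or to $0$ (using that the first row of a symmetric $M$ is zero precisely when its first column is), while $s_j$ and $Q$ act by signed conjugation by an invertible matrix and hence preserve rank and symmetry. Then, recalling that on $\mathcal{L}_i^n(\mathcal{W}_i)$ an element $\tau\in C_r\wr R_n$ sends $h_Z^n\otimes\sigma$ to $h_{Z'}^n\otimes(g\cdot\sigma)$ whenever $\tau h_Z^n=h_{Z'}^n g$ with $g\in C_r\wr S_i$ (and to $0$ if $\tau h_Z^n\notin\mathbb{L}_i^n$), one matches the two sides on generators: $Ph_Z^n\in\mathbb{L}_i^n$ iff $1\notin Z$, which is exactly when $\Phi_i(h_Z^n\otimes\sigma)$ has zero first row; $Qh_Z^n=h_Z^n\cdot Q_i$ with $Q_i$ the image of $Q$ in $C_r\wr S_i$ when $1\in Z$ (and $=h_Z^n$ otherwise), and since the diagonal element $Q_i$ has no inversions its model action is plain conjugation, matching $M\mapsto QMQ^{-1}$; and for $s_j$ one splits into cases according to how many of $j,j+1$ lie in $Z$. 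When at most one does, $s_jh_Z^n=h_{s_j(Z)}^n$ and both sides merely relabel an index; when both lie in $Z$ they are automatically consecutive in $Z$, say $j=r_k$ and $j+1=r_{k+1}$, and $s_jh_Z^n=h_Z^n\cdot s_k$ with $s_k=(k,k+1)\in S_i\subseteq C_r\wr S_i$.

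The one genuinely delicate point, which I expect to be the crux, is matching the signs in this last case: the model action of $s_k$ contributes $(-1)^{|\inv(s_k)\cap\pa(\sigma)|}$ with $\inv(s_k)=\{(k,k+1)\}$, whereas \eqref{al:actiongel} contributes $-1$ exactly when $(j,j+1)\in\pa(M)$ for $M=\Phi_i(h_Z^n\otimes\sigma)$; these agree because under the placement map $(j,j+1)=(r_k,r_{k+1})\in\pa(M)$ if and only if $(k,k+1)\in\pa(\sigma)$, since $M$ has its nonzero entries at positions $(r_a,r_b)$ with the same $C_r$-values as $\sigma$ has at $(a,b)$. Carrying out this bookkeeping carefully — together with the two relabeling cases and the $Q$-case, which are routine — yields $\Phi_i(g\cdot x)=(\text{prescribed action of }g)\,\Phi_i(x)$ for all generators $g$, hence $\mathcal{V}=\bigoplus_i\mathcal{V}_i\cong\bigoplus_i\mathcal{L}_i^n(\mathcal{W}_i)$ as $\CC[C_r\wr R_n]$-modules, and the proposition follows from the reduction in the first paragraph.
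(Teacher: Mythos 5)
Your proof is correct and follows essentially the same route as the paper: decompose $\mathcal{V}$ by rank into the pieces $\mathcal{V}_m$, identify $\mathcal{V}_m\cong\mathcal{L}_m^n(\mathcal{W}_m)$, and conclude via the fact that $\mathcal{W}_m$ is a Gelfand model for $\CC[C_r\wr S_m]$ together with the equivalence of Lemma~\ref{lm:Requi}. The only difference is that you spell out the placement isomorphism $h_Z^n\otimes\sigma\mapsto M$ and the sign bookkeeping for the $s_j$-action, which the paper dismisses as immediate from the construction; your verification of these details is accurate.
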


\begin{proof}
For $0\leq m\leq n$, let $\mathcal{V}_m$ be the $\CC$-span of 
$\mathcal{I}_m=\{M\in\mathcal{I}\mid \rn(M)=m \}$. 
From \eqref{al:actiongel} and \eqref{al:actionPQ}, we see that 
each $\mathcal{V}_m$ is closed under the action, moreover, 
it is easy to see that, directly by construction, we have 
$ \mathcal{V}_m\cong\mathcal{L}_m^{n}(\mathcal{W}_m)$.

Since $\mathcal{V}=\displaystyle{\bigoplus_{m=0}^{n}}\mathcal{V}_m$,  
the claim of the proposition follows by combining the facts that $\mathcal{W}_m$ is a 
Gelfand model for $\CC[C_r\wr S_m]$ and that 
$\displaystyle{\bigoplus_{m=0}^{n}}\mathcal{L}_{m}^n$ is an equivalence of categories,
see Lemma~\ref{lm:Requi}.
\end{proof}

\section{Jucys--Murphy elements}\label{sec:JM}

Given any subset $A$ of $[n]$, let $e_{A}$ be the diagonal matrix which has $1$ at 
the $(i,i)$-th entry for $i\in [n]\setminus A$ and zeros elsewhere. Consider the element
\begin{displaymath}
E_{A}:=\sum_{B\subseteq A} (-1)^{|B|}e_{B}.
\end{displaymath}
Then $E_{A}\in \FF[R_n]$ is an idempotent and, moreover, any two such idempotents commute with each other
(since all idempotents  of $R_n$ commute).	Assume that $r$ is non-zero in $\FF$. 
Consider the following elements in  $\FF[C_{r}\wr R_n]$:
\begin{align*}
&X_1=Q-P,  &&X_{j}=s_{j-1}X_{j-1}s_{j-1}, &\text{for } 2\leq j\leq n;\\
&Y_1=0,  &&Y_{j}=\frac{1}{r}\sum_{m=1}^{j-1} E_{\{m,j\}}\sum_{l=0}^{r-1}\xi_{m}^{l}\xi_{j}^{-l}(m,j), 
&\text{ for } 2\leq j\leq n;
\end{align*}
where $\xi_{m}^{l}\xi_{j}^{-l}\in C_r\wr S_n$ denotes the diagonal matrix with $1$'s
on the diagonal except for $\xi^l$ at the $(m,m)$-th entry and $\xi^{-l}$ at the $(j,j)$-th entry. 
Further, $(m,j)$ is the usual transposition in $S_n$. It is easy to observe that 
\begin{align}\label{al:QQ}
\xi_m^{l}\xi_j^{-l}=(1,m)Q^{l}(1,m)(1,j)Q^{-l}(1,j).
\end{align}
We will call the elements $\{X_i,Y_i\vert 1\leq i,j\leq n\}$ the {\em Jucys--Murphy elements} for $C_{r}\wr R_n$.

\begin{proposition}\label{prop:JMcommute}
For $1\leq i,j\leq n$, we have 
\begin{displaymath}
X_iX_j=X_jX_i,\quad Y_iY_j=Y_jY_i, \textrm{ and } X_iY_j=Y_jX_i.
\end{displaymath}
\end{proposition}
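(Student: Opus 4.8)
The plan is to prove the three families of commutation relations separately, exploiting the structure of the tower and the known commutation properties of Jucys--Murphy elements for generalized symmetric groups. For the relation $X_iX_j=X_jX_i$: the elements $X_j$ are conjugates of $X_1=Q-P$ by products of simple transpositions, and one checks that $X_j$ only involves the matrix entries in positions $1,\dots,j$; concretely $X_j = Q_j - P_j$ where $Q_j$ is the diagonal matrix with $\xi$ in the $(j,j)$-entry and $1$'s elsewhere on the diagonal, and $P_j$ is the diagonal matrix with $0$ in the $(j,j)$-entry and $1$'s elsewhere. Since all these are diagonal matrices in $C_r\wr R_n$, and diagonal matrices commute with one another (multiplication is entrywise on the diagonal), the relation $X_iX_j=X_jX_i$ follows immediately. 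So the first step is just to identify $X_j$ explicitly as $Q_j-P_j$ and observe it is a difference of commuting diagonal matrices; the only thing to verify is the closed form $X_j=Q_j-P_j$, which is an easy induction on $j$ using $X_j=s_{j-1}X_{j-1}s_{j-1}$ and the fact that conjugation by $s_{j-1}$ swaps the roles of the $(j-1)$-st and $j$-th coordinates.

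For $Y_iY_j=Y_jY_i$, the key observation is that the $Y_j$ live inside the subalgebra $\FF[C_r\wr R_n]$ in a way that mirrors the Jucys--Murphy elements for $\FF[C_r\wr S_n]$; more precisely, using the equivalence of Lemma~\ref{lm:Requi}, or working directly, one sees that on each block $\FF\mathbb{L}_i^n\otimes_{\FF[C_r\wr S_i]}(-)$ the action of $Y_j$ is controlled by the classical Jucys--Murphy elements for the generalized symmetric group (up to the idempotent factors $E_{\{m,j\}}$ which only cut down supports). I would first establish that the sum $\sum_{l=0}^{r-1}\xi_m^l\xi_j^{-l}(m,j)$, after dividing by $r$, is (up to the idempotent) exactly the standard Jucys--Murphy element summand for $C_r\wr S_n$, invoke the known fact (as in the references to \cite{ShuJ}, \cite{Wang}, or by a direct computation in the vein of \cite{HR98}) that these commute in $\FF[C_r\wr S_n]$, and then argue that the presence of the idempotents $E_{\{m,j\}}$ — which are diagonal and commute with everything diagonal — does not destroy the commutation. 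The main obstacle here is bookkeeping: one must verify that $E_{\{m,j\}}$ interacts correctly with the transpositions $(m',j')$ appearing in $Y_{j'}$, i.e. that $E_{\{m,j\}}(m',j') = (m',j')E_{\{m',j''\}}$ for appropriately permuted index sets, so that after collecting terms the idempotents can be absorbed and one reduces cleanly to the group case. I expect this to be the technical heart of the argument; it is a careful but elementary commutation analysis, most efficiently organized by noting that conjugation by $(m,j)$ permutes the idempotents $e_A$ by permuting $A$, hence permutes the $E_A$ accordingly.

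For the mixed relation $X_iY_j=Y_jX_i$, I would combine the previous two descriptions: $X_i=Q_i-P_i$ is diagonal, while $Y_j$ is a sum of terms of the form (diagonal idempotent) $\times$ (diagonal matrix) $\times$ (transposition $(m,j)$). Since the diagonal part commutes with $X_i$ trivially, the only issue is commuting $X_i$ past the transposition $(m,j)$. Here one uses that $(m,j)X_i(m,j) = X_{(m\,j)(i)}$, which equals $X_i$ unless $i\in\{m,j\}$; when $i\in\{m,j\}$ the transposition swaps $X_m$ and $X_j$. The point is that the coefficient attached to the term containing $(m,j)$ — namely the diagonal matrix $\tfrac1r\sum_l\xi_m^l\xi_j^{-l}$ together with $E_{\{m,j\}}$ — is symmetric in $m$ and $j$ (note $\xi_m^l\xi_j^{-l}(m,j) = (m,j)\xi_m^{-l}\xi_j^{l}$ and summing over $l$ is symmetric, and $E_{\{m,j\}}=E_{\{j,m\}}$), so that conjugating the whole summand by $(m,j)$, equivalently by $X_i$ when $i\in\{m,j\}$, leaves it invariant. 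Assembling: $X_i Y_j = \sum (\text{diag})\,X_i\,(m,j) = \sum (\text{diag})\,(m,j)\,X_{(m\,j)(i)}$, and matching this term-by-term with $Y_j X_i$ gives equality. The cleanest way to present all of this is probably to first record the auxiliary identities (the closed form $X_j=Q_j-P_j$; the symmetry of the $Y_j$-summands under conjugation by $(m,j)$; the rule $gE_Ag^{-1}=E_{g(A)}$ for $g\in S_n$), and then let the three relations fall out as short consequences. I do not anticipate any genuine difficulty beyond the index-chasing in the $Y_iY_j$ case; there is no subtlety in the $X$'s, and the mixed relation is forced by symmetry once the $Y$-summands are seen to be conjugation-invariant.
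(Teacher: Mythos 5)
Your treatment of the $X$'s is exactly the paper's argument (identify $X_j=Q_j-P_j$ by induction; differences of commuting diagonal elements commute), and your plan for $Y_iY_j=Y_jY_i$ is workable and amounts to the same computation the paper performs directly. The one caveat there: you cannot invoke commutativity of the Jucys--Murphy elements of $\FF[C_r\wr S_n]$ as a black box, because after pushing transpositions past the idempotents the various terms acquire different idempotent dressings, so the matching must be done term by term. It does succeed: for $p<i<j$ the ``generic'' terms (disjoint supports) commute outright, and for the interacting terms all idempotent prefactors collapse to the single idempotent $E_{\{p,i,j\}}$ (since $E_AE_B=E_{A\cup B}$), after which the identities $(p,i)(p,j)=(i,j)(p,i)$ and $\bigl(\sum_{l}\xi_p^l\xi_i^{-l}\bigr)\bigl(\sum_{l}\xi_i^l\xi_j^{-l}\bigr)=\bigl(\sum_{l}\xi_i^l\xi_j^{-l}\bigr)\bigl(\sum_{l}\xi_p^l\xi_j^{-l}\bigr)$ finish the job --- precisely the identities appearing in the paper's proof. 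So for the first two relations you are essentially reproducing the paper's route, merely packaged as a reduction to the wreath-product case.

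The gap is in the mixed relation $X_iY_j=Y_jX_i$. For the summand of $Y_j$ with $i\in\{m,j\}$, your term-by-term matching requires $E_{\{m,j\}}\bigl(\sum_l\xi_m^l\xi_j^{-l}\bigr)(X_m-X_j)=0$, and symmetry of the coefficient $E_{\{m,j\}}\sum_l\xi_m^l\xi_j^{-l}$ under conjugation by $(m,j)$ does not give this: $X_m-X_j$ is antisymmetric under that conjugation, and a symmetric element times an antisymmetric one need not vanish. Concretely, if you drop $E_{\{m,j\}}$ the coefficient $\sum_l\xi_m^l\xi_j^{-l}$ is still symmetric in $m,j$, yet $X_m$ does not commute with $\sum_l\xi_m^l\xi_j^{-l}(m,j)$ (already for $r=1$ one has $X_m(m,j)=(m,j)X_j\neq(m,j)X_m$ because of the $P$-part), so symmetry cannot be ``the point''. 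What is actually needed --- and what the paper uses --- is the pair of facts that the $Q$-parts agree after reindexing the character sum, $\sum_l\xi_m^{l+1}\xi_j^{-l}=\sum_l\xi_m^{l}\xi_j^{-l+1}$, and that the idempotent annihilates the $P$-parts, $E_{\{m,j\}}P_m=0=E_{\{m,j\}}P_j$; the latter is exactly the role of the idempotents in the definition of $Y_j$ and is invisible if $E_{\{m,j\}}$ is treated merely as a diagonal factor that ``commutes with everything diagonal''. (Also, ``conjugating by $X_i$'' is not literally meaningful, since $X_i$ is not invertible; for $r=1$ it is a proper idempotent.) These missing facts are elementary and the argument is easily repaired, but as written the decisive step of the mixed case is not justified.
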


\begin{proof}
For $1\leq j\leq n$, let $P_{j}=e_{\{j\}}$. Note that $P=P_{\{1\}}$.
Let $Q_{1}=Q$ and, for $2\leq j\leq n$, let $Q_{j}=s_{j-1}Qs_{j-1}$. 
Then $Q_{j}$ is a diagonal matrix whose $(j,j)$-th diagonal entry is 
equal to $\xi$ and the remaining diagonal entries are equal to $1$. 
For $1\leq i\leq n$, we can write $X_{i}=Q_{i}-P_{i}$, which is a 
linear combination of diagonal matrices. Thus $X_{i}X_{j}=X_{j}X_{i}$.
 
To prove $Y_{i}Y_{j}=Y_{j}Y_{i}$, without loss of generality, we may assume that $i<j$. We can write
\begin{align}\label{al:yj}
Y_{j}=&\frac{1}{r}\bigg(\sum_{m=1}^{i-1} E_{\{m,j\}}
\sum_{l=0}^{r-1}\xi_{m}^{l}\xi_{j}^{-l}(m,j)+E_{\{i,j\}}
\sum_{l=0}^{r-1}\xi^{l}_{i}\xi^{-l}_{j}(i,j)\nonumber\\ 
&\quad \quad+\sum_{m=i+1}^{j-1} E_{\{m,j\}}\sum_{l=0}^{r-1}\xi_{m}^{l}\xi_{j}^{-l}(m,j)\bigg).
\end{align}
Now, $Y_{i}$ commutes with 
$\displaystyle{\sum_{m=i+1}^{j-1}} E_{\{m,j\}}\displaystyle{\sum_{l=0}^{r-1}}\xi_{m}^{l}\xi_{j}^{-l}(m,j)$. 
The product of $Y_i$ with the middle term of \eqref{al:yj} can be written as
\begin{align}
&Y_{i}	E_{\{i,j\}}\sum_{l=0}^{r-1}\xi^{l}_{i}\xi^{-l}_{j}(i,j)\nonumber\\ 
& \quad \quad\quad= \frac{1}{r}\bigg(\sum_{m=1}^{i-1}E_{\{m,i,j\}}
\bigg(\sum_{l=0}^{r-1}\xi_{i}^{l}\xi_j^{-l}\bigg)
\bigg(\sum_{l=0}^{r-1}\xi_{m}^{l}\xi_j^{-l}\bigg)
(m,i)(i,j)\bigg),  \label{al:first} \\
& E_{\{i,j\}}\sum_{l=0}^{r-1}\xi^{l}_{i}\xi^{-l}_{j}(i,j)Y_i\nonumber\\ 
&\quad\quad\quad=\frac{1}{r}\bigg(\sum_{m=1}^{i-1}E_{\{m,i,j\}}
\bigg(\sum_{l=0}^{r-1}\xi_{i}^{l}\xi_j^{-l}\bigg)\bigg(\sum_{l=0}^{r-1}\xi_{m}^{l}\xi_j^{-l}\bigg)
(i,j)(m,i) \bigg).\label{al:sec}
\end{align}
 
Also, we can write
\begin{align}
&Y_i\sum_{m=1}^{i-1} E_{\{m,j\}}\sum_{l=0}^{r-1}\xi_{m}^{l}\xi_{j}^{-l}(m,j)\nonumber\\
&=\frac{1}{r} \bigg(\sum_{p=1}^{i-1} E_{\{p,i\}}
\sum_{l=0}^{r-1}\xi_{p}^{l}\xi_{i}^{-l}(p,i)\bigg)
\bigg(\sum_{m=1}^{i-1} E_{\{m,j\}}\sum_{l=0}^{r-1}\xi_{m}^{l}\xi_{j}^{-l}(m,j)\bigg)\nonumber\\
&=\frac{1}{r}\bigg(\sum_{p=1}^{i-1} E_{\{p,i\}}
\sum_{l=0}^{r-1}\xi_{p}^{l}\xi_{i}^{-l}(p,i)\bigg)
\bigg(\sum_{m=1,m\neq p}^{i-1} E_{\{m,j\}}\sum_{l=0}^{r-1}\xi_{m}^{l}\xi_{j}^{-l}(m,j)\bigg)\nonumber\\
&\quad \quad +\frac{1}{r}\bigg(\sum_{p=1}^{i-1} E_{\{p,i\}}
\sum_{l=0}^{r-1}\xi_{p}^{l}\xi_{i}^{-l}(p,i)\bigg)
\bigg( E_{\{p,j\}}\sum_{l=0}^{r-1}\xi_{p}^{l}\xi_{j}^{-l}(p,j)\bigg).\label{al:mid}
\end{align}
 
In the above, we note that elements in the first term commutes with each other 
and the second term simplifies to 
\begin{align*}
\sum_{p=1}^{i-1}E_{\{p,i,j\}}\bigg(\sum_{l=0}^{r-1}\xi_{p}^{l}\xi_i^{-l}\bigg)
\bigg(\sum_{l=0}^{r-1}\xi_{i}^{l}\xi_j^{-l}\bigg)
(p,i)(p,j),
\end{align*}
which is equal to \eqref{al:sec} using, for $1\leq p\leq  i-1$, that $(p,i)(p,j)=(i,j)(p,i)$ and
\begin{displaymath}
\bigg(\sum_{l=0}^{r-1}\xi_{p}^{l}\xi_i^{-l}\bigg)
\bigg(\sum_{l=0}^{r-1}\xi_{i}^{l}\xi_j^{-l}\bigg)=
\bigg(\sum_{l=0}^{r-1}\xi_{i}^{l}\xi_j^{-l}\bigg)\bigg(\sum_{l=0}^{r-1}\xi_{p}^{l}\xi_j^{-l}\bigg).
\end{displaymath}
Similarly to \eqref{al:mid}, we can write
\begin{align*}
&\sum_{m=1}^{i-1} E_{\{m,j\}}\sum_{l=0}^{r-1}\xi_{m}^{l}\xi_{j}^{-l}(m,j)Y_i\nonumber\\
&=\frac{1}{r}\bigg(\sum_{m=1}^{i-1} E_{\{m,j\}}\sum_{l=0}^{r-1}\xi_{m}^{l}\xi_{j}^{-l}(m,j)\bigg)
\bigg(\sum_{p=1, p\neq i}^{i-1} E_{\{p,i\}}\sum_{l=0}^{r-1}\xi_{p}^{l}\xi_{i}^{-l}(p,i)\bigg)\nonumber\\
&\quad \quad +\frac{1}{r}\bigg(\sum_{m=1}^{i-1} E_{\{m,j\}}\sum_{l=0}^{r-1}\xi_{m}^{l}\xi_{j}^{-l}(m,j)\bigg)
\bigg( E_{\{m,i\}}\sum_{l=0}^{r-1}\xi_{m}^{l}\xi_{i}^{-l}(m,i)\bigg),
\end{align*}
where the elements in the first term commutes with each other and the second term is equal 
to \eqref{al:first}. All of the  above, finally, yield that $Y_iY_j=Y_jY_i$.

If $i>j$, then $X_i=Q_i-P_i$ commutes with each term in $Y_j$, which implies that $X_iY_j=Y_jX_i$. 
Let us assume that $i\leq j$. Both $Q_i$ and $P_i$ commute with every element in the 
first and the last terms in \eqref{al:yj}. Now $Q_i$ commutes with
$E_{\{i,j\}}\displaystyle{\sum_{l=0}^{r-1}}\xi^{l}_{i}\xi^{-l}_{j}(i,j)$ because 
$\displaystyle{\sum_{l=0}^{r-1}}\xi_{i}^{l+1}\xi^{-l}_{j}=\displaystyle{\sum_{l=0}^{r-1}}\xi_{i}^{l}\xi^{-l+1}_{j}$. 
Furthermore, $P_i$ commutes with $E_{\{i,j\}}\displaystyle{\sum_{l=0}^{r-1}}\xi^{l}_{i}\xi^{-l}_{j}(i,j)$  
because $P_{i}E_{\{i,j\}}=0=E_{\{i,j\}}P_j$. This completes the proof.
\end{proof}

\begin{theorem}\label{thm:JM}
For $\lambda^{(r)}\in \Lambda_{r}(\leq n)$ and  $L=(L_{1},\ldots,L_{r})\in\mathcal{Y}(\lambda^{(r)}, n)$, let $v_{L}$ be the corresponding Gelfand--Zeitlin basis vector of $V_{\lambda^{(r)}}^n$. For $1\leq i\leq n$, we have
\begin{align}
&X_{i}  v_{L}=	\begin{cases}\label{ex:X}
\xi^{k-1} v_{L}, & \text{ if } i\in L \text{ and }\sgn_{L}(i)=\xi^{k-1};\\
0, & \text{ otherwise; }
\end{cases}\\
&	Y_{i}  v_{L}=	\begin{cases}\label{ex:Y}
\ct(L(i)) v_{L}, & \text{ if } i\in L;\\
0, & \text{ otherwise. }
\end{cases}
\end{align}
\end{theorem}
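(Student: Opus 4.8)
The plan is to reduce everything to an explicit computation on the model $V_{\lambda^{(r)}}^n = \CC\LL_i^n \otimes_{\CC[C_r\wr S_i]} W_{\lambda^{(r)}}^i$, using the basis $v_L = h_Z^n \otimes w_{L'}$ built in the proof of Theorem~\ref{thm:constirre}, where $Z$ is the set of entries of $L$ and $L'$ is the ``standardization'' of $L$ to $[i]$. The key point is that the $X_j$ are (images of) diagonal matrices and the $Y_j$ are built from transpositions and the idempotents $E_A$, so one must track how each generator interacts with the tensor factor $h_Z^n$ (which records the placement of entries) versus the factor $w_{L'}$ (which carries the Young-tableau content/sign data).

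First I would do the $X_i$ case. Using $X_i = Q_i - P_i$ with $Q_i = s_{i-1}\cdots s_1 Q s_1 \cdots s_{i-1}$ a diagonal matrix with $\xi$ in position $(i,i)$ and $P_i = e_{\{i\}}$, one computes $X_i h_Z^n$. If $i \notin Z$ then $Q_i h_Z^n = h_Z^n$ while $P_i h_Z^n = h_Z^n$ as well (the $i$-th row is zero), so $X_i h_Z^n = 0$, giving $X_i v_L = 0$; this matches the ``otherwise'' case since $i \notin L$ means $i \notin Z$. If $i \in Z$, say $i = r_k$ is the $k$-th smallest element of $Z$, then $P_i h_Z^n = 0$ and $Q_i h_Z^n = h_Z^n \cdot (\text{diagonal matrix with }\xi\text{ in position }(k,k)\text{ of }C_r\wr S_i) = h_Z^n \cdot Q_k^{(i)}$; pushing this scalar through the tensor product and invoking \eqref{al:symQ}-type action of $Q_k$ on $w_{L'}$ (which reads off the position of $k$ in $L'$, equivalently of $i$ in $L$) yields $X_i v_L = \xi^{k-1} v_L$ exactly when $\sgn_L(i) = \xi^{k-1}$. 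I would state the needed fact that $Q_k$ acting on $W_{\lambda^{(r)}}^i$ scales $w_{L'}$ by $\sgn_{L'}(k)$ (the case $k=1$ is \eqref{al:symQ}; the general $k$ follows by the same cellular/conjugation argument or is standard for generalized symmetric groups).

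For the $Y_i$ case, I would expand $Y_i h_Z^n = \frac{1}{r}\sum_{m<i} E_{\{m,i\}}\sum_l \xi_m^l \xi_i^{-l}(m,i) \cdot h_Z^n$ and analyze termwise. The idempotent $E_{\{m,i\}}$ annihilates $h_Z^n$ unless at least one of $m,i$ lies outside $Z$ — wait, more precisely $E_{\{m,i\}} = f_n - e_{\{m\}} - e_{\{i\}} + e_{\{m,i\}}$ acts as the projection killing any matrix whose row $m$ or row $i$ is nonzero; combined with the requirement that $(m,i)h_Z^n \in \LL_i^n$ (which needs $m \notin Z$, since $i \in Z$ forces $m$ into the image after transposing) one sees that if $i \notin Z$ then every term is $0$, giving $Y_i v_L = 0$, consistent with $i \notin L$. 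If $i \in Z$, then for each $m < i$ with $m \notin Z$ the element $(m,i)h_Z^n$ is again in $\LL_i^n$ but represents a new subset $Z' = (Z \setminus\{i\})\cup\{m\}$; crucially $E_{\{m,i\}}$ then kills it unless $m \notin Z$, and the surviving contributions, once the $\xi_m^l\xi_i^{-l}$ and the Clifford-group bookkeeping $(m,i) = h_{Z'}^n (h_{Z'}^n)^{tr}(m,i)h_Z^n$ are pushed through the tensor product, assemble — via $\frac{1}{r}\sum_l \xi^{l(\text{sgn difference})}$ being $1$ when signs agree and $0$ otherwise — into precisely the Jucys--Murphy element for $C_r\wr S_i$ acting on $w_{L'}$. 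Then I invoke the classical fact (the analogue of the symmetric-group statement, e.g. via \cite{HR98,AK}) that this element scales $w_{L'}$ by $\ct(L'(k)) = \ct(L(i))$, where $k$ is the standardized position of $i$.

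The main obstacle will be the careful bookkeeping in the $Y_i$ computation: verifying that the sum over $m$ and over $l$ in $Y_i$, after the basis change $v_L = h_Z^n \otimes w_{L'}$, collapses correctly to the generalized-symmetric-group Jucys--Murphy operator on the factor $W_{\lambda^{(r)}}^i$ — in particular that terms where the transposition $(m,i)$ moves an entry \emph{within} $Z$ (so that no rank drop occurs but the subset changes) contribute exactly the ``internal'' Jucys--Murphy terms $\sum_l \xi_p^l\xi_k^{-l}(p,k)$ of $C_r\wr S_i$, while terms moving an entry \emph{out of} $Z$ are killed by $E_{\{m,i\}}$. Once this identification is in place, \eqref{ex:X} and \eqref{ex:Y} are immediate from the known eigenvalues of Jucys--Murphy elements on the seminormal basis of $W_{\lambda^{(r)}}^i$; alternatively, and perhaps more cleanly, one can avoid the abstract appeal by inducting on $n$ and using the recursion $X_j = s_{j-1}X_{j-1}s_{j-1}$ together with the branching/restriction compatibility of Theorem~\ref{thm:res} and the explicit $s_j$-action \eqref{al:rooksi}, checking directly that the claimed eigenvalues are consistent with that recursion — this may in fact be the shortest route and I would present it as the primary argument, falling back on the model only for the base cases $X_1 = Q - P$ and $Y_2$.
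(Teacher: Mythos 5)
Your primary route is genuinely different from the paper's. The paper proves both formulas by induction on the index of the Jucys--Murphy element, using the recursions $X_m=s_{m-1}X_{m-1}s_{m-1}$ and $Y_m=s_{m-1}Y_{m-1}s_{m-1}+\tfrac{1}{r}E_{\{m-1,m\}}\big(\sum_{l=0}^{r-1}\xi_{m-1}^{l}\xi_{m}^{-l}\big)s_{m-1}$ together with the explicit action of $s_j,P,Q$ from Theorem~\ref{thm:constirre}, via a case analysis that never leaves the rook-monoid module. You instead compute directly on the model $v_L=h_Z^n\otimes w_{L'}$ (write $d=|\lambda^{(r)}|$, $Z\in\mathcal{S}_d$) and transport $X_i$ and $Y_i$ to the corresponding Jucys--Murphy elements of $\CC[C_r\wr S_d]$ acting on $w_{L'}$: $P_i$ kills $h_Z^n$ exactly when $i\in Z$, $Q_ih_Z^n=h_Z^nQ_k$ when $i$ is the $k$-th element of $Z$, and, when $i\in Z$, the surviving summands of $Y_i$ assemble via $(m,i)h_Z^n=h_Z^n(a,b)$ into $\tfrac1r\sum_{a<b}\sum_l\xi_a^l\xi_b^{-l}(a,b)\in\CC[C_r\wr S_d]$. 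This is a clean reduction, and what it buys is that the monoid statement follows from the eigenvalue statement for $C_r\wr S_d$ on its seminormal basis; but note that the latter (that $Q_k$ acts by $\sgn_{L'}(k)$ and the wreath JM element by $\ct(L'(b))$) is \emph{not} contained in Theorem~\ref{thm:gensym}, which only records the action of the $s_j$ and of $Q=Q_1$, so you must either cite it precisely (Ariki--Koike at $q=1$, or the seminormal form for generalized symmetric groups) or prove it by an induction that is essentially the paper's computation transplanted into the group algebra. Your fallback sketch (induction using $X_j=s_{j-1}X_{j-1}s_{j-1}$ and \eqref{al:rooksi}) is the paper's actual argument, except that for $Y_i$ you would also need the recursion with the correction term displayed above, which you do not write down.

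Two local statements in your $Y_i$ discussion are wrong as written, although your concluding paragraph has the mechanism right. First, $E_{\{m,i\}}$ is not ``the projection killing any matrix whose row $m$ or row $i$ is nonzero''; on $\CC\LL_d^n$ it fixes a basis matrix precisely when \emph{both} rows $m$ and $i$ are nonzero and kills it otherwise (this is the formula $E_{\{m-1,m\}}v_L=v_L$ iff $m-1\in L$ and $m\in L$ used in the paper's proof). Second, $(m,i)h_Z^n$ always lies in $\LL_d^n$ --- left multiplication by a permutation neither drops the rank nor moves the column support --- so the selection of surviving terms is done entirely by $E_{\{m,i\}}$, not by any rank drop, and your intermediate sentence ``$E_{\{m,i\}}$ then kills it unless $m\notin Z$'' has the condition reversed: for $i\in Z$ the terms with $m\notin Z$ are exactly the ones killed, and the terms with $m\in Z$ survive and give the internal JM element, as you correctly state at the end. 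With these slips corrected and a proper source (or proof) for the wreath-product eigenvalue fact, your reduction gives a complete and somewhat more conceptual proof than the paper's self-contained induction.
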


\begin{proof}
We begin with proving $\eqref{ex:X}$ using induction on $i$. 

From $\eqref{al:P-Q}$, we have:
\begin{align*}
X_1  v_{L}=Q  v_{L}-P  v_{L}=\begin{cases}
\xi^{k-1}v_{L}, & \text{ if } 1\in L \text{ and }\sgn_{L}(1)=\xi^{k-1};\\
0, & \text{ otherwise.} 
\end{cases}
\end{align*}
Assume now that \eqref{ex:X} is true for $2\leq i< m\leq n$, and let us prove it for $i=m$. 
By definition, $X_m=s_{m-1}X_{m-1}s_{m-1}$. We need to consider several cases.

{\bf Case~1.} $m-1\in L,m\notin L$. Then $s_{m-1}  v_L=v_{s_{m-1}L}$ and $s_{m-1}L$ 
does not contain $m-1$. From the inductive assumption,  we have $X_{m-1}  v_{s_{m-1}L}=0$ 
and this  implies $X_{m}  v_{L}=0$.

{\bf Case~2.} $m-1\notin L$, $m\in L $. Let us further assume that $\sgn_{L}(m)=\xi^{k-1}$, 
for some $1\leq k\leq r$. Then $s_{m-1}  v_{L}=v_{s_{m-1}L}$ and $\sgn_{s_{m-1}L}(m-1)=\xi^{k-1}$.
Applying the inductive assumption, we get the desired formula.

{\bf Case~3.} $m-1\in L$, $m\in L$ and $\sgn_{L}(m-1)\neq\sgn_{L}(m)$. This case is analogous to Case~2.

{\bf Case~4.} $m-1\in L$, $m\in L$ and $\sgn_{L}(m-1)=\sgn_{L}(m)=\xi^{k-1}$. 
In this case, $m-1\in L_k$ and $m\in L_k$, for some $1\leq k\leq r$. Then 
$$s_{m-1}  v_{L}=a_{L}(m-1)v_{L}+(1+a_{L}(m-1))v_{s_{m-1}L}.$$
Now we have to consider two subcases.

{\bf Subcase~4.1.} $s_{m-1}L\notin\mathcal{Y}(\lambda^{(r)},n)$. 
Then $m-1$ and $m$ appear adjacent to each other either in the same row 
of $L_k$ or in the same column of $L_k$. This implies that $a_{L}(m-1)=\pm 1$. 
Also, recall that, by convention, in this case we have $v_{s_{m-1}L}=0$. 
Now, applying the inductive assumption, we obtain 
$$s_{m-1}X_{m-1}s_{m-1}  v_{L}=a_{L}^{2}(m-1)\xi^{k-1}v_{L}=\xi^{k-1}v_{L}.$$

{\bf Subcase~4.2.} $s_{m-1}L\in\mathcal{Y}(\lambda^{(r)},n)$. Using the inductive assumption, we have:

\resizebox{\textwidth}{!}{
$
\begin{array}{rl}
s_{m-1}X_{m-1}s_{m-1}  v_{L}&=
\xi^{k-1}\big(a_{L}(m-1)s_{m-1}v_{L}+(1+a_{L}(m-1))s_{m-1}  v_{s_{m-1}L}\big)\\
&=\xi^{k-1}a_{L}(m-1)\big(a_{L}(m-1)v_{L}+(1+a_{L}(m-1))v_{s_{m-1}L}\big)+\\
&\quad\,\,\xi^{k-1}(1+a_{L}(m-1))\big(-a_{L}(m-1)v_{s_{m-1}L}+(1-a_{L}(m-1))v_{L}\big)\\
&=\xi^{k-1}\big(A v_{L}+Bv_{s_{m-1}L}\big), 
\end{array}
$
}
where
\begin{align*}
&A=a_{L}^2(m-1)+(1+a_{L}(m-1))(1-a_{L}(m-1))=1\\
&B=a_{L}(m-1)(1+a_{L}(m-1))-(1+a_{L}(m-1))a_{L}(m-1)=0.
\end{align*}
This proves \eqref{ex:X}.

To prove \eqref{ex:Y}, we again use induction on $i$. For $i=1$, the claim is obvious. 
Assume now that $\eqref{ex:Y}$ is true for $2\leq i< m\leq n$, and let us  prove it for $i=m$.

We have 
\begin{displaymath}
Y_m=s_{m-1}Y_{m-1}s_{m-1}+  \frac{1}{r}E_{\{m-1,m\}}
\left(\sum_{l=0}^{r-1}\xi_{m-1}^{l}\xi_{m}^{-l}\right)s_{m-1}.
\end{displaymath}
We need to consider several cases. Note that 
in all the cases below, the action of $\xi_{m-1}^{l}\xi_{m}^{-l}$ is computed using \eqref{al:rooksi}, \eqref{al:P-Q} and \eqref{al:QQ}; in addition, we also use
\begin{align*}
	E_{\{m-1,m\}}v_{L}=\begin{cases}
		v_{L}, &  \text{ if } m-1\in L, m\in L;\\
		0, & \text{ otherwise}.		
		\end{cases}
	\end{align*}

{\bf Case~1.} $m-1\in L,m\notin L$. Then $s_{m-1}  v_L=v_{s_{m-1}L}$ and $s_{m-1}L$ does not contain $m-1$. 
Further, assume that $\sgn_{L}(m-1)=\xi^{k-1}$, so that $\sgn_{s_{m-1}L}(m)=\xi^{k-1}$.
By the inductive assumption, $s_{m-1}Y_{m-1}s_{m-1}  v_{L}=s_{m-1}Y_{m-1}  v_{s_{m-1}L}=0$. Also,
\begin{align*}
\frac{1}{r}E_{\{m-1,m\}}\left(\sum_{l=0}^{r-1}\xi_{m-1}^{l}\xi_{m}^{-l}\right)s_{m-1}  v_{L}
&=\frac{1}{r}E_{\{m-1,m\}}\left(\sum_{l=0}^{r-1}\xi_{m-1}^{l}\xi_{m}^{-l}\right)v_{s_{m-1}L}\\
&=\frac{1}{r}E_{\{m-1,m\}}\left(\sum_{l=0}^{r-1}\xi^{-l(k-1)}\right)  v_{s_{m-1}L}\\
&=\frac{1}{r}\left(\sum_{l=0}^{r-1}\xi^{-l(k-1)}\right)E_{\{m-1,m\}}  v_{s_{m-1}L}=0.
\end{align*}

{\bf Case~2.} $m-1\notin L,m\in L$. Then $s_{m-1}  v_L=v_{s_{m-1}L}$. 
Further, assume that $\sgn_{L}(m)=\xi^{k-1}$, so that $\sgn_{s_{m-1}L}(m-1)=\xi^{k-1}$. 
By the inductive assumption, we have 
$s_{m-1}Y_{m-1}s_{m-1}  v_{L}=\ct((s_{m-1}L)(m-1))v_{L}=\ct(L(m))v_{L}$. 
Similarly to Case~1, the term 
$\displaystyle\frac{1}{r}E_{\{m-1,m\}}\left(\sum_{l=0}^{r-1}\xi_{m-1}^{l}\xi_{m}^{-l}\right)s_{m-1}$ acts as 
zero on $v_{L}$.

{\bf Case~3.} $m-1\in L$, $m\in L$. Assume further that 
$\sgn_{L}(m-1)=\xi^{k_1-1}$ and $\sgn_{L}(m)=\xi^{k_2-1}$, where $1\leq k_1\neq k_2\leq r$. 
By the inductive assumption, we have 
\begin{align*}
s_{m-1}Y_{m-1}s_{m-1}  v_{L}&=\ct((s_{m-1}L)(m-1))v_{L}=\ct(L(m))v_{L}
\end{align*}
and
\begin{align*}
\frac{1}{r}E_{\{m-1,m\}}\left(\sum_{l=0}^{r-1}\xi_{m-1}^{l}\xi^{-l}_{m}\right)s_{m-1}
  v_{L}=\frac{1}{r}\left(\sum_{l=0}^{r-1}\xi^{l(k_2-k_1)}\right)v_{s_{m-1}L}=0.
\end{align*}
The last equality in the above is a consequence of the following: for an integer $s$, we have
\begin{align*}\label{al:xisum}
\sum_{l=0}^{r-1}\xi^{ls}=
\begin{cases}
0, & \text{ if } s\neq 0;\\
r, & \text{ if } s=0.
\end{cases}
\end{align*}

{\bf Case~4.} $m-1\in L$, $m\in L$ and $\sgn_{L}(m-1)=\sgn_{L}(m)=\xi^{k-1}$. 
This means that $m-1\in L_k$ and $m\in L_k$, for some $1\leq k\leq r$. Then we have 
$$s_{m-1}  v_{L}=a_{L}(m-1)v_{L}+(1+a_{L}(m-1))v_{s_{m-1}L}.$$
We have now to consider two subcases.

{\bf Subcase~4.1.} $s_{m-1}L\notin\mathcal{Y}(\lambda^{(r)},n)$. 
Then $m-1$ and $m$ appear adjacent to each other either in a same row of $L_k$ 
or in a same column of $L_k$, This implies that $a_{L}(m-1)=\pm 1$. 
Also, recall our convention that $v_{s_{m-1}L}=0$ in this case. Now, by the 
inductive assumption, we obtain
$$s_{m-1}Y_{m-1}s_{m-1}  v_{L}=(a_{L}(m-1))^{2}\ct(L(m-1))v_{L}=\ct(L(m-1))v_{L}.$$

Further, we have:
\begin{align*}
\frac{1}{r}E_{\{m-1,m\}}\left(\sum_{l=0}^{r-1}\xi_{m-1}^{l}\xi_{m}^{-l}\right)s_{m-1}  v_{L}
&= \frac{1}{r}E_{\{m-1,m\}}\left(\sum_{l=0}^{r-1}\xi_{m-1}^{l}\xi_{m}^{-l}\right)  (a_{L}(m-1)v_{L})\\
&= \frac{1}{r}E_{\{m-1,m\}} \left(\sum_{l=0}^{r-1}\xi^{l(k-k)}\right)(a_{L}(m-1)v_{L})\\
&= \frac{1}{r}E_{\{m-1,m\}}  \left(\sum_{l=0}^{r-1}1\right)a_{L}(m-1)v_{L}\\
&=a_{L}(m-1)v_{L}.
\end{align*}
Since $a_{L}(m-1)+\ct(L(m-1))=\ct(L(m))$, we get the desired answer.

{\bf Subcase~4.2.} $s_{m-1}L\in\mathcal{Y}(\lambda^{(r)},n)$. 
Using the inductive assumption, we have:
\begin{align}
&s_{m-1}Y_{m-1}s_{m-1}  v_{L}\nonumber\\
&= a_{L}(m-1)\ct(L(m-1))s_{m-1}  v_{L}+(1+a_{L}(m-1))
\ct(L(m))s_{m-1}  v_{s_{m-1}L}\nonumber\\
&= a_{L}(m-1)\ct(L(m-1))\big(a_{L}(m-1)v_{L}+(1+a_{L}(m-1))v_{s_{m-1}L}\big)\nonumber\\
& \quad\,\,+(1+a_{L}(m-1))\ct(L(m))
\big(-a_{L}(m-1)v_{s_{m-1}L}+(1-a_{L}(m-1))v_{L}\big).\nonumber
\end{align}
Now, we compute the action of second term on $v_L$:

\resizebox{\textwidth}{!}{
$
\begin{array}{l}
\displaystyle
\frac{1}{r}E_{\{m-1,m\}}\left(\sum_{l=0}^{r-1}\xi_{m-1}^{l}\xi_{m}^{-l}\right)s_{m-1}
  v_{L}\nonumber\\ \displaystyle
= \frac{1}{r}E_{\{m-1,m\}}
\bigg(\left(\sum_{l=0}^{r-1}\xi_{m-1}^{l}\xi_{m}^{-l}\right)
  a_{L}(m-1)v_{L} +\left(\sum_{l=0}^{r-1}\xi_{m-1}^{l}\xi_{m}^{-l}\right)
 (1+a_{L}(m-1))v_{s_{m-1}L}\bigg)\nonumber\\ \displaystyle
=\frac{1}{r}E_{\{m-1,m\}} \bigg(\left(\sum_{l=0}^{r-1}\xi^{l(k-k)}\right) 
a_{L}(m-1)v_{L} +\left(\sum_{l=0}^{r-1}\xi^{l(k-k)}\right)
(1+a_{L}(m-1))v_{s_{m-1}L}\bigg)\nonumber\\ \displaystyle
=\frac{1}{r}E_{\{m-1,m\}} \bigg(\left(\sum_{l=0}^{r-1}1\right) 
a_{L}(m-1)v_{L} +\left(\sum_{l=0}^{r-1}1\right)
(1+a_{L}(m-1))v_{s_{m-1}L}\bigg)\nonumber\\ \displaystyle
=a_{L}(m-1)v_{L}+(1+a_{L}(m-1))v_{s_{m-1}L}.
\end{array}
$
}

Combining the coefficients at $v_{L}$ and $v_{s_{m-1}L}$, we get
\begin{displaymath}
Y_{m}  v_{L}=Cv_{L}+Dv_{s_{m-1}L},
\end{displaymath}
where
\begin{align*}
C&=a_{L}^{2}(m-1)\ct(L(m-1))+(1-a_{L}^{2}(m-1))\ct(L(m))+a_{L}(m-1)\\
&=a_{L}(m-1)\big[a_{L}(m-1)(\ct(L(m-1))-\ct(L(m)))+1\big]+\ct(L(m))\\
&=a_{L}(m-1)[0]+\ct(L(m))\\
&=\ct(L(m))
\end{align*}
and
\begin{align*}
D&=a_{L}(m-1)\ct(L(m-1))(1+a_{L}(m-1))-\\
&\quad\,-(1+a_{L}(m-1))\ct(L(m))a_{L}(m-1)+(1+a_{L}(m-1))\\
&=(1+a_{L}(m-1))\big[a_{L}(m-1)(\ct(L(m-1))-\ct(L(m)))+1\big]\\
&=0.
\end{align*}
This implies \eqref{ex:Y} and completes the proof of the theorem.
\end{proof}

\begin{remark}
For $r=1$, the corresponding Jucys--Murphy elements were given 
in \cite[Equation~2.9]{MS21}. It is easy to show, using induction, 
that the elements in \cite{MS21} and the elements in \eqref{ex:Y} 
are the same.
\end{remark}

As an immediate consequence of Theorem~\ref{thm:JM}, we have the following:

\begin{corollary}\label{coro:irre}
The eigenvalues of  the action of  $X_i$ and $Y_i$, 
for $1\leq i\leq n$, on Gelfand--Zeitlin basis vectors  distinguish 
the latter and, consequently, the isomorphism classes of simple 
$\CC[C_r\wr R_n]$-modules.
\end{corollary}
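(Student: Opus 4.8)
The plan is to deduce Corollary~\ref{coro:irre} directly from the explicit eigenvalue formulas in Theorem~\ref{thm:JM}. Fix $n$ and suppose $v_L$ and $v_{L'}$ are two Gelfand--Zeitlin basis vectors, with $L\in\mathcal{Y}(\lambda^{(r)},n)$ and $L'\in\mathcal{Y}(\mu^{(r)},n)$, such that the tuple of scalars $\big((X_i,Y_i)\big)_{1\le i\le n}$ acts on $v_L$ and $v_{L'}$ by the same eigenvalues. We want to conclude that $\lambda^{(r)}=\mu^{(r)}$ and $L=L'$; this simultaneously shows that the eigenvalue data separates Gelfand--Zeitlin basis vectors within a fixed simple module and separates the simple modules themselves, since distinct $\lambda^{(r)}$ give non-isomorphic $V^n_{\lambda^{(r)}}$ by Theorem~\ref{thm:constirre}\eqref{thm:constirre.1}.

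First I would read off from \eqref{ex:X} and \eqref{ex:Y} which combinatorial data is encoded. For each $i\in[n]$, the pair $(X_i,Y_i)$ acts by $(0,0)$ precisely when $i\notin L$, and by $(\xi^{k-1},\ct(L(i)))$ when $i\in L$ at position $k$. Hence from the eigenvalues we recover: the set of $i$ with $i\in L$ (equivalently the underlying set $Z\subseteq[n]$ of entries of $L$, hence $|\lambda^{(r)}|=|Z|$); for each such $i$, the position $k=k(i)$ it occupies (from $X_i=\xi^{k-1}$, using that $\xi$ is a primitive $r$-th root of unity so $\xi^{k-1}$ determines $k\in\{1,\dots,r\}$); and for each such $i$, the content $\ct(L(i))$ of the box containing $i$. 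Thus the eigenvalue data is exactly equivalent to knowing, for each component index $k$, the set $Z_k=\{\,i : i\in L \text{ at position }k\}$ together with the content function $i\mapsto\ct(L(i))$ on $Z_k$.

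Next I would show this data reconstructs $L$. Work one component at a time: restrict to $Z_k$ with its content labels. This is precisely a standard Young tableau of shape $\lambda_{(k)}$ recorded by listing, for each entry, the content of its box. It is classical (and elementary, by induction on $|Z_k|$ removing the largest entry, whose box is necessarily an outer corner) that a standard Young tableau is determined by the multiset of (entry, content) pairs: knowing the content of the maximal entry pins down which corner it sits in, and recursion on the smaller tableau finishes it. In particular the shape $\lambda_{(k)}$ is recovered as the shape of the reconstructed tableau. Doing this for all $k$ recovers the tuple $(L_1,\dots,L_r)$, hence $L$, and simultaneously $\lambda^{(r)}=(\lambda_{(1)},\dots,\lambda_{(r)})$.

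The argument is essentially routine once Theorem~\ref{thm:JM} is in hand; the only genuine point to get right is the tableau-reconstruction step, i.e. that the content sequence of a standard Young tableau determines the tableau. I would state this as a short lemma or fold it into the proof with the one-line induction sketched above. A small care point: one must use that $r$ is invertible in $\FF$ (which is the standing hypothesis under which the $Y_i$ are defined and Theorem~\ref{thm:JM} holds) and that $\xi^{0},\dots,\xi^{r-1}$ are pairwise distinct, so that the value of $X_i$ unambiguously records the component $k$; over $\CC$ this is automatic, and in characteristic $p$ it is guaranteed by $p\nmid r$. No other obstacle is expected.
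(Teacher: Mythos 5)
Your proposal is correct and follows exactly the route the paper intends: the corollary is stated as an immediate consequence of Theorem~\ref{thm:JM}, with the eigenvalues of $X_i$ recording which entries occur and at which component, the eigenvalues of $Y_i$ recording their contents, and the standard tableau-reconstruction argument (placing entries in increasing order into the unique addable box of the prescribed content) recovering $L$ and hence $\lambda^{(r)}$. Your elaboration, including the remark on the distinctness of $\xi^{0},\dots,\xi^{r-1}$, just makes explicit what the paper leaves to the reader.
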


In particular, it follows that the Gelfand--Zeitlin subalgebra 
of $\CC[C_r\wr R_n]$ is generated by Jucys--Murphy elements.

\section{Bicyclic monoid}\label{sec:bcyclic}

The results of this section should be known to experts. 
However, we could not trace an explicit reference, so we provide
all proofs, for completeness.	

Recall, cf. \cite{ClPr}, that the bicyclic monoid $\mathcal{B}$ is an infinite 
monoid generated by two elements $a$ and $b$ and given by the following presentation:
\begin{displaymath}
\mathcal{B}:=\langle a, b\mid ab=1 \rangle.
\end{displaymath}
We have $\mathcal{B}=\{b^{n_1}a^{n_2}\mid n_1,n_2\in\NN\}$, where
$\NN:=\{0,1,2,\ldots\}$. 

Consider the $\CC$-vector space $V_{\NN}$ with $\NN$ as a basis. 
Define the actions of $b$ and $a$ on $V_{\NN}$  as follows:
\begin{align*}
b  i= i+1,\qquad\text{ and } \qquad a  i=\begin{cases}
i-1 ,& \text{ if } i>0;\\
0, & \text{ otherwise.}
\end{cases}
\end{align*}
In the above, note that $a$ acts as zero on the basis vector $0\in\NN$. Then it is easy to check that $V_{\NN}$ becomes a simple $\CC[\mathcal{B}]$-module.

For a non-zero $\lambda\in\CC$, let $V_{\lambda}$ be the $1$-dimensional
$\mathbb{C}$-vector space with basis $v_\lambda$. 
Define the actions of $b$ and $a$ on $V_{\lambda}$ as follows:
\begin{displaymath}
b  v_\lambda=\lambda v_\lambda\quad\text{ and }
\quad a  v_\lambda=\lambda^{-1}v_\lambda.
\end{displaymath}
Then $V_{\lambda}$ is a simple $\CC[\mathcal{B}]$-module.

\begin{proposition}\label{thm:bcyclic}
Let $V$ be a simple $\CC[\mathcal{B}]$-module. 
Then either $V\cong V_{\NN}$ or $V\cong V_{\lambda}$, 
for some non-zero $\lambda\in\CC$.
\end{proposition}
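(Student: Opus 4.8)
The plan is to classify simple $\CC[\mathcal{B}]$-modules by exploiting the element $ba$, which is idempotent in $\mathcal{B}$ since $(ba)(ba) = b(ab)a = ba$. On a simple module $V$, the operator $ba$ therefore acts as an idempotent linear map, so $V = \ker(ba) \oplus \operatorname{im}(ba)$. First I would analyze how $a$ and $b$ interact with this decomposition: from $ab = 1$ we get that $b$ is injective on $V$ and $a$ is surjective, that $a$ restricted to $\operatorname{im}(ba)$ is injective with inverse $b$, and that $b$ maps $V$ into $\operatorname{im}(ba)$ (because $ba \cdot bv = b(ab)v = bv$). Also $a$ kills $\ker(ba)$: if $(ba)v = 0$ then $b(av) = 0$, and injectivity of $b$ gives $av = 0$; conversely $av=0$ implies $(ba)v = 0$. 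So $\ker(ba) = \ker(a)$.

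Next I would split into two cases according to whether $\ker(a) = 0$ or not. If $\ker(a) = 0$, then $ba = \operatorname{id}$ on $V$, so $a$ and $b$ are mutually inverse linear operators; $\CC[\mathcal{B}]$ then acts through its quotient $\CC[\ZZ]$ (the group algebra of the infinite cyclic group generated by the image of $b$), and a simple module over a commutative algebra like $\CC[\ZZ] \cong \CC[t, t^{-1}]$ over the algebraically closed field $\CC$ is one-dimensional, with $b$ acting by some nonzero scalar $\lambda$ and $a$ by $\lambda^{-1}$; this is exactly $V_\lambda$. If instead $\ker(a) \neq 0$, pick a nonzero $v_0 \in \ker(a)$ and consider the vectors $v_i := b^i v_0$ for $i \in \NN$. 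Using $ab = 1$ and $a v_0 = 0$ one checks $b v_i = v_{i+1}$ and $a v_i = v_{i-1}$ for $i > 0$ while $a v_0 = 0$, so the span of $\{v_i\}$ is a nonzero submodule, hence all of $V$ by simplicity. The $v_i$ are linearly independent because $b$ is injective and an eventual linear dependence $\sum_{i=0}^N c_i v_i = 0$ with $c_N \neq 0$ could be pushed down by applying $a^N$ to force $c_N v_0 = 0$, after checking the lower terms vanish — more carefully, apply $a^k$ for varying $k$ and use that $a^{i+1}v_i = 0$ while $a^i v_i = v_0 \neq 0$ to peel off coefficients one at a time. Thus $V \cong V_{\NN}$ via $v_i \mapsto i$.

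The main obstacle, and the place requiring the most care, is the linear independence of the orbit vectors $\{v_i = b^i v_0\}$ in the second case: injectivity of $b$ alone does not immediately give it, and one needs to argue via the action of powers of $a$ (noting $a^j v_i = v_{i-j}$ for $j \le i$ and $a^j v_i = 0$ for $j > i$) to separate the basis vectors, essentially reconstructing the "shift operator" picture. A clean way to organize this is: suppose $\sum_{i \in F} c_i v_i = 0$ for a finite set $F$ with largest element $N$; apply $a^{N}$ to both sides to obtain $c_N v_0 = 0$, hence $c_N = 0$ since $v_0 \neq 0$; then induct downward. Once independence is established, the module isomorphism with the explicitly described $V_{\NN}$ is immediate from matching the actions of $a$ and $b$ on basis vectors, and the one-dimensional case is standard commutative algebra over an algebraically closed field, so the whole argument is short modulo that independence check.
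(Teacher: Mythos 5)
Your proposal is correct, and its overall architecture coincides with the paper's: both arguments hinge on the dichotomy of whether $a$ acts injectively (equivalently, whether $\ker a=0$), produce $V_{\lambda}$ in the injective case and build $V_{\NN}$ from a vector annihilated by $a$ in the other. The differences are in the supporting tools. Your opening via the idempotent $ba$ and the decomposition $V=\ker(ba)\oplus\operatorname{im}(ba)$ is a pleasant framing but not essential; the paper gets the same dichotomy directly from $L_aL_b=\mathrm{Id}$. In the injective case the paper invokes the Schur--Dixmier lemma to force $L_a=\lambda\,\mathrm{Id}$, whereas you observe $ba=\mathrm{id}$, factor the action through $\CC[\mathcal{B}]/(ba-1)\cong\CC[t,t^{-1}]$, and use that simple modules over this finitely generated commutative $\CC$-algebra are one-dimensional (Zariski's lemma); both are valid, yours avoiding Dixmier's cardinality argument at the cost of the Nullstellensatz -- just be aware that the one-dimensionality is not a feature of arbitrary commutative algebras, so the finite generation (or countable dimension) of $\CC[t,t^{-1}]$ should be cited explicitly. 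In the non-injective case your downward induction with powers of $a$ (using $a^{j}v_i=v_{i-j}$ for $j\le i$ and $a^{j}v_i=0$ for $j>i$) is a genuine improvement in rigor over the paper's terse assertion that injectivity of $L_b$ yields linear independence of $\{b^n v\}$; the paper's claim does hold, but only in combination with $av=0$, and your argument (or the variant: strip leading powers of $b$ by injectivity, then apply $a$ once) is exactly the missing detail.
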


\begin{proof}
Let $L_a$ and $L_b$ be linear operators on $V$ representing the 
actions of $a$ and $b$. Since $ab=1$, we have $L_a\circ L_b=\mathrm{Id}_{V}$,
in particular, $L_a$ is surjective and $L_b$ is injective.
We need to consider two cases.

{\bf Case 1.} Suppose $L_a$ is injective and hence invertible. 
Then $L_b=L_a^{-1}$ and, in particular, $L_aL_b=L_bL_a$
and hence $L_a$ commutes with the action of $\CC[\mathcal{B}]$.
From Schur--Dixmier Lemma, it then follows that $L_a=\lambda \mathrm{Id}_{V}$,
for some $\lambda\in\CC$. Obviously, this $\lambda$ must be non-zero.
Consequently, $L_b=\lambda^{-1} \mathrm{Id}_{V}$. In this case
any subspace of $V$ is, clearly, a submodule. Therefore $V$ must 
have dimension one by simplicity and hence is isomorphic to $V_{\lambda}$.

{\bf Case 2.} Suppose $L_a$ is not injective. 
Then there exists a non-zero $v\in V$ such that $a  v=0$. 
Consider the subspace $W$ spanned by $\{b^n  v\mid n\in\NN\}$. 
Since $L_b$ is injective, the set $\{b^n  v\mid n\in\NN\}$ is 
linearly independent and, therefore, $W$ is infinite-dimensional. 
Clearly, $W$ is stable under the action of $\CC[\mathcal{B}]$ 
and is isomorphic to $V_{\NN}$. Since $V$ is simple, we must have $V=W$
and, finally, $V\cong V_{\NN}$.
\end{proof}

\section{Grothendieck groups for rook monoid algebras}\label{sec:gro}

In this section, we assume that the characteristic of $\FF$ is $p>0$.   
We identify  the prime subfield $\FF_p$ of $\FF$ with the additive 
cyclic group of order $p$. We start by recalling the classical results
related to the tower of symmetric groups.

\subsection{The case of symmetric groups}

We have the following {\em Jucys--Murphy elements} for the algebra $\FF[S_n]$:
\begin{align}\label{al:JMS}
\widetilde{Y}_k=\sum_{i=1}^{k-1}(i,k),\quad \text{where }  k\in [n].
\end{align}
Here, $(i,k)\in S_n$ is a transposition.

For $V\in \FF[S_n]\Mod$,  the eigenvalues of the operator $\widetilde{Y}_k$ on 
$V$ lie in $\FF_p$ (e.g., see \cite[Lemma~2.2]{KB}).  Since the elements in \eqref{al:JMS} commute with each other, for 
$\underline{r}=(r_1,r_2,\ldots,r_n)\in \FF_p^{n}$, the common generalized 
eigenspace of $V$ with respect to the elements in \eqref{al:JMS} is
\begin{displaymath}
V_{\underline{r}}:=\{v\in V\mid (\widetilde{Y}_k-r_k)^{N}v=0,   
\text{  for all } k\in [n] \text{  and }N\gg0 \}.
\end{displaymath}
We then have the decomposition 
$V=\displaystyle{\bigoplus_{\underline{r}\in \FF_p^n}} V_{\underline{r}}$.

For $\underline{r}\in \FF_p^n$ and  $l\in \FF_{p}$, let  
$\mu_{l}:=\vert\{k\in[n]\mid r_k=l\}\vert$. 
Then $\displaystyle{\sum_{l\in \FF_p}}\mu_l=n$ and the tuple 
$\ty(\underline{r})=(\mu_0,\ldots,\mu_{p-1})\in \NN^{p}$ is called the {\em weight} 
of $\underline{r}$.   For 
$$\mu\in\mathcal{T}_n:=\{(\mu_0,\ldots,\mu_{p-1})\in \NN^{p}\mid \displaystyle{\sum_{r=0}^{p-1}\mu_r}= n\},$$ 
let 
\begin{align}\label{al:symdecom}
V[\mu]=\displaystyle{\bigoplus_{\substack{\underline{r}\in \FF_p^n\\\ty(\underline{r})=\mu}}}V_{\underline{r}}.
\end{align}
The following lemma is \cite[Lemma~2.4]{KB} and the key point in their proof is that the center of $\FF[S_n]$ is generated by the elementary symmetric polynomials (see \cite[Section 2]{Mac}) in $\{Y_k\mid k\in[n]\}$, which can be found in \cite{FHG} and \cite{Jucys}.

\begin{lemma}\label{lm:Sblock}
For $V\in \FF[S_n]\Mod$ and $\mu\in\mathcal{T}_n$, 
the space $V[\mu]$ is a $\FF[S_n]$-module, moreover,
$V\cong \displaystyle{\bigoplus_{\mu \in\mathcal{T}_n}}V[\mu]$.
\end{lemma}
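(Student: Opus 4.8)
The plan is to realize each $V[\mu]$ as a generalized eigenspace of $V$ with respect to a commuting family of \emph{central} elements of $\FF[S_n]$, so that $\FF[S_n]$-stability becomes automatic. The relevant elements are the elementary symmetric polynomials in the Jucys--Murphy elements of \eqref{al:JMS},
\begin{displaymath}
z_k := e_k(\widetilde{Y}_1,\ldots,\widetilde{Y}_n)\in\FF[S_n],\qquad 1\leq k\leq n,
\end{displaymath}
which are central in $\FF[S_n]$ (they in fact generate $Z(\FF[S_n])$, cf. \cite{FHG,Jucys}). Since the $\widetilde{Y}_j$ pairwise commute, so do the $z_k$, and $V$ carries a generalized eigenspace decomposition $V=\bigoplus_{\alpha}W_\alpha$ with respect to the tuple $(z_1,\ldots,z_n)$, the sum running over the occurring systems $\alpha$ of generalized eigenvalues.

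First I would check that, on the common generalized eigenspace $V_{\underline{r}}$, each $\widetilde{Y}_j$ acts as $r_j\cdot\mathrm{id}$ plus a nilpotent operator, and that these nilpotent operators pairwise commute; hence $z_k=e_k(\widetilde{Y}_1,\ldots,\widetilde{Y}_n)$ acts on $V_{\underline{r}}$ as $e_k(r_1,\ldots,r_n)\cdot\mathrm{id}$ plus a nilpotent operator, i.e.\ with the single generalized eigenvalue $e_k(\underline{r})$ (in particular the eigenvalues of the $z_k$ again lie in $\FF_p$). Consequently $V_{\underline{r}}\subseteq W_{\alpha(\underline{r})}$ for $\alpha(\underline{r}):=(e_1(\underline{r}),\ldots,e_n(\underline{r}))$, and since $V=\bigoplus_{\underline{r}\in\FF_p^n}V_{\underline{r}}$ this forces $W_{\alpha}=\bigoplus_{\underline{r}:\,\alpha(\underline{r})=\alpha}V_{\underline{r}}$. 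Next I would observe that $\alpha(\underline{r})$ depends only on $\ty(\underline{r})$ and conversely determines it: the tuple $\alpha(\underline{r})$ is, up to signs, the coefficient sequence of the split monic polynomial $\prod_{j=1}^{n}(x-r_j)\in\FF_p[x]$, which is equivalent data to the family of its roots counted with multiplicity, and since each $r_j$ lies in $\FF_p$ this is exactly the information recorded by the weight $\ty(\underline{r})=(\mu_0,\ldots,\mu_{p-1})$ via $\mu_l=|\{j:r_j=l\}|$. Thus for each $\mu\in\mathcal{T}_n$ there is a unique occurring $\alpha$ with $W_\alpha=\bigoplus_{\ty(\underline{r})=\mu}V_{\underline{r}}=V[\mu]$.

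It then remains to note that each $W_\alpha$ is an $\FF[S_n]$-submodule: for $a\in\FF[S_n]$, $v\in W_\alpha$ and $N\gg0$ one has $(z_k-\alpha_k)^N(av)=a(z_k-\alpha_k)^Nv=0$ by centrality of $z_k$. As the $W_\alpha$ exhaust $V$, this yields $V\cong\bigoplus_{\mu\in\mathcal{T}_n}V[\mu]$ with each summand a submodule. I expect the only point requiring a little care to be the verification that $z_k$ has $e_k(\underline{r})$ as its \emph{only} generalized eigenvalue on $V_{\underline{r}}$ — i.e.\ that the cross terms obtained by expanding $e_k$ in the $\widetilde{Y}_j=r_j\cdot\mathrm{id}+N_j$ lie in the nilpotent ideal generated by the commuting nilpotents $N_j$ — together with the elementary but essential fact that elementary symmetric polynomials separate multisets of elements of $\FF_p$; everything else is formal.
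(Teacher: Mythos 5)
Your proposal is correct and follows exactly the route the paper intends: the paper refers to \cite[Lemma~2.4]{KB} and explicitly identifies the key point as the centrality of the elementary symmetric polynomials in the Jucys--Murphy elements (via \cite{FHG,Jucys}), which is precisely what you use to make the weight spaces $V[\mu]$ into generalized eigenspaces for central elements and hence submodules. Your write-up simply fills in the standard linear-algebra details (commuting nilpotent parts, separation of multisets by elementary symmetric polynomials) that the paper leaves to the cited reference.
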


\subsubsection{Decompositions of induction and restriction functors}

For $i\in \FF_p$ and $ \gamma=(\gamma_0,\ldots,\gamma_{p-1})\in \mathcal{T}_n$, 
denote  by $\gamma+i$ the element in $\NN^p$ whose $i$-th coordinate is 
$\gamma_i+1$ and the remaining coordinates are the same as those of $\gamma$. 
Similarly, if $\gamma_i\neq 0$, denote by $\gamma-i$  an element in $\NN^p$,  
whose $i$-th coordinate is $\gamma_i-1$ and the remaining coordinates are 
the same as those of $\gamma$.

For  $\gamma\in\mathcal{T}_n$ and $V=V[\gamma]\in \FF[S_n]\Mod$, 
define the functors
\begin{displaymath}
\widetilde{\res}_i:\FF[S_n]\Mod\to \FF[S_{n-1}]\Mod\quad\text{ and } 
\quad\widetilde{\ind}_i:\FF[S_n]\Mod\to \FF[S_{n+1}]\Mod
\end{displaymath}
as follows:
\begin{align*}
&\widetilde{\res}_i(V[\gamma]):=\begin{cases}
(\Res^{\FF[S_n]}_{\FF[S_{n-1}]}V[\gamma])[\gamma-i], & \text{ if } \gamma_i\neq 0; \\
0, &\text{ otherwise};
\end{cases}\\
&\widetilde{\ind}_i(V[\gamma]):=
(\Ind^{\FF[S_{n+1}]}_{\FF[S_{n}]}V[\gamma])[\gamma+i].
\end{align*}
This definition extends to any object in $\FF[S_n]\Mod$
and hence completely defines $\widetilde{\res}_i$
and $\widetilde{\ind}_i$ due to Lemma~\ref{lm:Sblock}.  
The functors $\widetilde{\res}_i$ and $\widetilde{\ind}_i$ 
are called the {\em $i$-restriction} and the {\em $i$-induction} functors, 
respectively. Using these definitions, we get the following decomposition 
of the restriction and induction functors in terms of the 
$i$-restriction and $i$-induction functors
\begin{displaymath}
\Res^{\FF[S_n]}_{\FF[S_{n-1}]}(V)\cong \bigoplus_{i\in \FF_p}
\widetilde{\res}_i \quad \text{and} \quad 
\Ind^{\FF[S_{n+1}]}_{\FF[S_{n}]}(V)\cong \bigoplus_{i\in \FF_p}
\widetilde{\ind}_i. 
\end{displaymath}

Recall that for a finite-dimensional algebra $A$, $G_0(A)$ denotes the complexified Grothendieck group of $A\Mod$.  For $M\in A\Mod$, let $[M]\in G_{0}(\FF[S_n])$ 
the class of the corresponding of $M$ in $G_0(A)$
 If $B$ is another finite-dimensional algebra over $\FF$ and $F$ is a covariant exact functor from $A\Mod$ to $B\Mod$, then let $[F]$ denote the induced $\CC$-linear map from $G_0(A)$ to $G_0(B)$.

The functors $\widetilde{\res}_i$ and $\widetilde{\ind}_i$ 
are exact and hence they induce the following linear maps 
\begin{align*}
	[\widetilde{\res}_i]:G_0(\FF[S_n])\to G_0(\FF[S_{n-1}]) \text{ and }
	 [\widetilde{\ind}_i]:G_0(\FF[S_n])\to G_0(\FF[S_{n+1}]).
\end{align*}
For $\lambda\in\Lambda^{p}(n)$, we denote by $D^{\lambda}$
 the corresponding simple module of $\FF[S_n]\Mod$. 
Then the set $\{[D^{\lambda}]\mid \lambda\in\Lambda^{p}(n)\}$ 
is a basis of $G_{0}(\FF[S_n])$. In order to describe 
actions of $[\widetilde{\res}_i]$ and $[\widetilde{\ind}_i]$ 
in this basis, we need to recall some terminology.

\begin{definition}
Let $\lambda$ be a partition and $i\in \FF_p$.
\begin{enumerate}
\item For a box $b\in[\lambda]$, the {\em residue of $b$}, 
denoted by $\res(b)$, is defined as the content of $b$ modulo $p$.
\item A box $b\in[\lambda]$ is called {\em $i$-removable} if $\res(b)=i$ 
and $[\lambda]\setminus \{b\}$ is a Young diagram.
\item A box $b$ adjacent to $[\lambda]$ is called {\em $i$-addable}
for $\lambda$ if $\res(b)=i$ and $[\lambda]\bigcup\{b\}$ is a Young diagram.
\item Let us label all $i$-addable boxes for $\lambda$ by $+$ and 
all $i$-removable boxes in $[\lambda]$ by $-$. Then, going along the
rim of $[\lambda]$ from bottom left to top right, we can read off
a sequence of $+$ and $-$. This sequence is called the {\em $i$-signature} 
of $\lambda$. We refer to \cite[Subsection~2.4]{KB} for more details
and an example.
\item  The {\em reduced $i$-signature of $\lambda$} is obtained from the
$i$-signature of $\lambda$ by removing, recursively, all adjacent pairs 
of the form $-+$. Note that the reduced $i$-signature is a 
sequence of $+$'s followed by a sequence of $-$'s. The boxes labeled by 
$-$ (respectively, $+$) in the reduced $i$-signature of $\lambda$ 
are called {\em $i$-normal} (respectively, {\em $i$-conormal}). 
We again refer to \cite[Subsection~2.4]{KB} for more details
and an example.
\end{enumerate} 	
\end{definition}

Finally, we have the following result describing  actions of 
the linear operators $[\widetilde{\res}_i]$  and $[\widetilde{\ind}_i]$
on the classes of simple modules, 
see \cite[Theorems $E$ and $E'$]{KB1}.

\begin{theorem}\label{thm:action}
For $\lambda\in\Lambda^{p}(n)$, we have 
\begin{align*}
[\widetilde{\res}_i]([D^\lambda])=
\sum_{\mu\in\Lambda^{p}(n-1)}\alpha_{\lambda\mu}[D^\mu]  \quad\text{ and }\quad
[\widetilde{\ind}_i]([D^\lambda])=
\sum_{\nu\in\Lambda^{p}(n+1)}\beta_{\lambda\nu}[D^\nu],
\end{align*}
where 
\begin{enumerate}[$($a$)$]
\item $\alpha_{\lambda\mu}=0$ unless there exists an $i$-normal box 
$b\in[\lambda]$ such that $[\lambda]\setminus\{b\}=[\mu]$, in which case 
$\alpha_{\lambda\mu}$ is the number of $i$-normal boxes in the $i$-reduced signature of $[\lambda]$ that are to the right to $b$ (including $b$);
\item $\beta_{\lambda\nu}=0$ unless there exists an $i$-conormal box $b$ adjacent to 
$[\lambda]$ such that $[\lambda]\bigcup\{b\}=[\nu]$, in which case  
$\beta_{\lambda\nu}$ is the number of $i$-conormal boxes in the $i$-reduced signature of $[\lambda]$ that are to the left to $b$ (including $b$).
\end{enumerate}
\end{theorem}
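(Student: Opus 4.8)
The plan is to deduce the statement from the categorical $\widehat{\mathfrak{sl}}_p$-action on the tower $\bigoplus_{n\in\NN}\FF[S_n]\Mod$, so that Theorem~\ref{thm:action} becomes the description of this action in a distinguished basis. As recalled in Section~\ref{s1}, the operators $e_i:=\sum_n[\widetilde{\res}_i]$ and $f_i:=\sum_n[\widetilde{\ind}_i]$, together with the weight grading of $G_0(\FF[S_n])$ by $\mathcal{T}_n$ provided by Lemma~\ref{lm:Sblock}, make $\mathcal{G}:=\bigoplus_n G_0(\FF[S_n])$ into an integrable $\widehat{\mathfrak{sl}}_p$-module, and by the theorem of \cite{LLT} (see also \cite{Groj}) this module is the basic representation $V(\Lambda_0)$, generated by the class $[D^\varnothing]$ of the trivial $\FF[S_0]=\FF$-module as a highest weight vector of weight $\Lambda_0$. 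Since the simple classes $\{[D^\lambda]\}$, with $\lambda$ ranging over all $p$-regular partitions, form a weight basis of $V(\Lambda_0)$ (each $D^\lambda$, being indecomposable, lying in a single block $V[\mu]$ by Lemma~\ref{lm:Sblock}), the content of the theorem is precisely the computation of the matrices of $e_i$ and $f_i$ in this basis.

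The first---crystalline---step is to show that for each $i$ the module $\widetilde{\res}_i D^\lambda$ has simple socle (or is zero), that the resulting operators $\tilde{e}_i D^\lambda:=\operatorname{soc}(\widetilde{\res}_i D^\lambda)$, and their duals $\tilde{f}_i$ defined via $\operatorname{soc}(\widetilde{\ind}_i D^\lambda)$, realize the crystal graph $B(\Lambda_0)$ on the set of $p$-regular partitions, and that the edges are governed by the reduced $i$-signature: $\widetilde{\res}_i D^\lambda\neq 0$ precisely when $[\lambda]$ has an $i$-normal box, in which case $\tilde{e}_i$ removes a distinguished such box, and dually $\tilde{f}_i$ adds a distinguished $i$-conormal box. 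This is the content of Kleshchev's modular branching rule (cf. \cite{K1,K2}) and is proved by the standard Kashiwara-operator argument: one introduces the divided-power refinements of $\widetilde{\res}_i$ (the higher generalized-eigenspace functors), shows by induction on $n$ that $\varepsilon_i(\lambda):=\max\{k:\tilde{e}_i^{\,k}D^\lambda\neq 0\}$ equals the number of $i$-normal boxes of $[\lambda]$, using the $\mathfrak{sl}_2$-theory of the subalgebra generated by $e_i$ and $f_i$, and then identifies the abstract crystal so obtained with the well-known combinatorial model of $B(\Lambda_0)$ on $p$-regular partitions.

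The remaining step is the refinement to the exact coefficients $\alpha_{\lambda\mu}$ and $\beta_{\lambda\nu}$. Here one decomposes $\mathcal{G}$, as a module over the $\mathfrak{sl}_2$-subalgebra $\mathfrak{sl}_2^{(i)}$ generated by $e_i$ and $f_i$, into a direct sum of finite-dimensional irreducibles (the $i$-strings); the reduced $i$-signature of $\lambda$ records the weight of $[D^\lambda]$ relative to this decomposition together with the positions of the $i$-normal and $i$-conormal boxes. Combining the action of $e_i$ and $f_i$ on a single $\mathfrak{sl}_2$-string with the divided-power relations $e_i^{(a)}e_i^{(b)}=\binom{a+b}{a}e_i^{(a+b)}$ and with the fact, from the previous step, that $e_i^{(\varepsilon_i(\lambda))}$ applied to $D^\lambda$ is simple and removes all $i$-normal boxes of $[\lambda]$, one reads off that the coefficient of $[D^\mu]$ in $e_i[D^\lambda]$, for $[\mu]=[\lambda]\setminus\{b\}$ with $b$ an $i$-normal box, equals the number of $i$-normal boxes of $[\lambda]$ weakly to the right of $b$, and symmetrically for $f_i$ and $i$-conormal boxes. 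I expect this last refinement to be the main obstacle: the classes $[D^\lambda]$ are not a basis adapted to the $i$-string decomposition, so passing from the vanishing pattern of the coefficients to their exact values genuinely requires the finer information about the socle and radical series of $\widetilde{\res}_i D^\lambda$---that is, the representation-theoretic analysis of \cite{KB1}---and cannot be extracted from the relations valid in the Grothendieck group alone.
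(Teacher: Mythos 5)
The paper does not prove this statement at all: Theorem~\ref{thm:action} is imported verbatim from Brundan and Kleshchev, cited as \cite[Theorems E and E']{KB1}, and is then used as a black box in \eqref{eq:E}, \eqref{eq:F} and Theorem~\ref{thm:main}. So there is no internal argument to compare yours against; the only question is whether your outline would stand as an independent proof of the quoted result.

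It would not, for two reasons. First, your closing paragraph concedes the decisive point: the exact values of $\alpha_{\lambda\mu}$ and $\beta_{\lambda\nu}$ cannot be extracted from the $\mathfrak{sl}_2^{(i)}$-string decomposition and divided-power identities in the Grothendieck group, and you fall back on ``the representation-theoretic analysis of \cite{KB1}'' --- but those coefficients \emph{are} the content of Theorems E and E' of \cite{KB1}, obtained there from a detailed study of the socle and radical layers of the $i$-restriction and $i$-induction of $D^\lambda$ via translation-functor techniques, not from crystal combinatorics. Reducing the theorem to its own source is not a proof; if citation is acceptable, the honest move is simply to cite \cite{KB1}, which is exactly what the paper does. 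Second, the logical order of your first step is backwards relative to the literature: the identification of $\bigoplus_{n}G_0(\FF[S_n])$ with the basic representation $V(\Lambda_0)$ (Theorem~\ref{thm:Smod}, from \cite{LLT,Groj}) is itself established using the branching behaviour of $\widetilde{\res}_i$ and $\widetilde{\ind}_i$ on simple modules (at least the crystal data of \cite{K1,K2}), so taking that identification as the starting input for Theorem~\ref{thm:action} risks circularity unless you check that the proof of Theorem~\ref{thm:Smod} you invoke does not already use the multiplicities you are trying to derive. Your description of the crystal-operator step (Kleshchev's rule, $\varepsilon_i(\lambda)$ counting $i$-normal boxes) is consistent with \cite{K1,K2}, but as written it, too, is quoted rather than proved.
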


For every $i\in \FF_p$, we can view both $[\widetilde{\res}_i]$ and 
$[\widetilde{\ind}_i]$ as $\mathbb{C}$-linear endomorphisms of the space
$$\tilde{\mathcal{G}}_{\CC}=\displaystyle{\bigoplus_{n\in\NN}}G_0(\FF[S_n]).$$ 
Let $\widehat{\mathfrak{sl}}_{p}(\CC)$ denote the affine Lie algebra of type $A_{p-1}^{(1)}$ 
with the Chevalley generators $e_{i}, f_{i}$, for $i\in \FF_p$. Also, let 
$V(\Lambda_{0})$ denote the basic representation of $\widehat{\mathfrak{sl}}_{p}(\CC)$ with dominant integral weight $\Lambda_0$, see \cite[Section 4]{LLT}, \cite[Chapter 11]{K3},   and also \cite{Kacbook, Kac}.
The following theorem, proved in \cite{LLT} and \cite{Groj}, relates 
$\tilde{\mathcal{G}}_{\CC}$ to $V(\Lambda_{0})$ (see also \cite[Theorem 2.7]{KB}).

\begin{theorem}\label{thm:Smod}
{\hspace{1mm}}

\begin{enumerate}[$($a$)$]
\item\label{thm:Smod.1} 
For $i\in \FF_p$, the endomorphisms $[\widetilde{\res}_i]$ 
and $[\widetilde{\ind}_i]$ of $\tilde{\mathcal{G}}_{\CC}$ 
satisfy the defining relations for the Chevalley generators of $\widehat{\mathfrak{sl}}_{p}(\CC)$.
\item \label{thm:Smod.2} 
For $i\in \FF_p$, let $e_{i}$ and $f_{i}$ act on $\tilde{\mathcal{G}}_{\CC}$
via $[\widetilde{\res}_i]$ and $[\widetilde{\ind}_i]$, respectively. 
Then the obtained $\widehat{\mathfrak{sl}}_{p}(\CC)$-module is isomorphic to
the basic representation $V(\Lambda_{0})$.
\end{enumerate}
\end{theorem}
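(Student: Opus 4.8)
The plan is to follow the classical arguments of \cite{LLT} and \cite{Groj}, which we now outline; in the present paper the statement is quoted from those sources (see also \cite[Theorem~2.7]{KB}) rather than reproved from scratch. The starting point is the explicit description of $[\widetilde{\res}_i]$ and $[\widetilde{\ind}_i]$ on the basis $\{[D^\lambda]\}$ furnished by Theorem~\ref{thm:action}, together with the observation that these two operators are biadjoint up to the grading shift by $n$; the latter is a consequence of Frobenius reciprocity for the inclusion $S_{n-1}\subset S_n$ and of the compatibility of $\Res$ and $\Ind$ with the block decomposition of Lemma~\ref{lm:Sblock} along central characters of Jucys--Murphy elements.

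For part (a), I would first establish the $\mathfrak{sl}_2$-relation: on a weight space $G_0(\FF[S_n])[\mu]$ the commutator of $[\widetilde{\ind}_i]$ and $[\widetilde{\res}_i]$ acts as a scalar recording the difference between the numbers of $i$-addable and $i$-removable boxes, which one identifies with the value of the Cartan element $\alpha_i^\vee$ on $\ty(\mu)$. The essential input is the Mackey isomorphism $\Res^{S_{n+1}}_{S_{n-1}}\circ\Ind^{S_{n+1}}_{S_n}\cong\bigl(\Ind^{S_n}_{S_{n-1}}\circ\Res^{S_n}_{S_{n-1}}\bigr)\oplus\mathrm{Id}$, refined to a fixed residue. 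The commutation relations for $[\widetilde{\res}_i],[\widetilde{\res}_j]$ with $|i-j|>1$, the corresponding relations for induction, and the cubic Serre relations $(\mathrm{ad}\,[\widetilde{\ind}_i])^{1-a_{ij}}[\widetilde{\ind}_j]=0$ for $|i-j|=1$ are then checked directly from Theorem~\ref{thm:action} by analysing how the $i$- and $j$-signatures of a partition interact; in the cubic case the alternating sum telescopes once reduced signatures are tracked carefully. The alternative argument of \cite{Groj} deduces all of these relations uniformly from the representation theory of affine Hecke algebras of type $A$ at a $p$-th root of unity (or their degenerate versions) acting on $\bigoplus_n\FF[S_n]\Mod$, bypassing the explicit box combinatorics.

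For part (b), once part (a) holds $\tilde{\mathcal{G}}_\CC$ is an integrable $\widehat{\mathfrak{sl}}_p(\CC)$-module, and the class $[D^{\varnothing}]\in G_0(\FF[S_0])$ is a highest weight vector of weight $\Lambda_0$ since $[\widetilde{\res}_i]$ annihilates $G_0(\FF[S_0])$ for every $i$. The key remaining point is cyclicity, $\tilde{\mathcal{G}}_\CC=U(\widehat{\mathfrak{sl}}_p(\CC))\cdot[D^{\varnothing}]$: by Theorem~\ref{thm:action}(b), starting from $\varnothing$ and successively adding $i$-conormal boxes one reaches every $p$-regular partition, and an induction on $n$ using the unitriangularity (with respect to dominance order) of the transition matrix between iterated $i$-inductions of $[D^{\varnothing}]$ and the classes $\{[D^\lambda]:\lambda\in\Lambda^p(n)\}$ shows each $[D^\lambda]$ lies in the cyclic submodule. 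Hence $\tilde{\mathcal{G}}_\CC$ is an integrable quotient of the Verma module $M(\Lambda_0)$, and therefore a quotient of $V(\Lambda_0)$; comparing graded dimensions --- $\dim G_0(\FF[S_n])$ is the number of $p$-regular partitions of $n$, which by the Weyl--Kac character formula, equivalently by the count of level-$n$ vertices in the crystal $B(\Lambda_0)$, equals the dimension of the corresponding weight space of $V(\Lambda_0)$ --- forces the surjection $V(\Lambda_0)\twoheadrightarrow\tilde{\mathcal{G}}_\CC$ to be an isomorphism.

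The hard part will be the Serre relations in part (a): the $\mathfrak{sl}_2$- and Cartan relations drop out of Mackey theory almost formally, but the Serre relations genuinely require the refined combinatorics of reduced $i$-signatures from Theorem~\ref{thm:action}, and organizing the alternating sum so that its vanishing is visible is the delicate step. This is precisely where the detailed computations of \cite{LLT} and \cite{Groj} are needed.
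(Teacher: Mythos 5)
The paper does not prove Theorem~\ref{thm:Smod} itself: it is quoted from \cite{LLT} and \cite{Groj} (see also \cite[Theorem~2.7]{KB}), exactly as you note at the outset. Your outline reproduces the standard argument of those references (biadjointness and Mackey theory plus the reduced-signature combinatorics of Theorem~\ref{thm:action} for the Chevalley and Serre relations, the highest-weight vector $[D^{\varnothing}]$, cyclicity via conormal boxes, and the graded-dimension comparison with $V(\Lambda_0)$), so it is essentially the same approach as the paper's, which simply defers to the literature.
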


\subsection{The case of rook monoids}

Recall Jucys--Murphy elements of $\mathbb{Z}[R_n]$:
\begin{align*}
X_{j}:=E_{\{k\}}\quad\text{and}\quad
Y_{j}:= \sum_{i=1}^{j-1}E_{\{i,j\}}(i,j),\quad \text{ where }  j\in [n].
\end{align*}

\begin{lemma} 
The idempotent $E_{\{n-1,n\}}$ commutes with $Y_{n-1}$, $Y_{n}$ and $s_{n-1}$.
\end{lemma}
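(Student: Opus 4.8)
The plan is to unpack the definitions of all three elements and verify each commutation relation directly, using the fact (already noted) that all idempotents of $R_n$ commute with each other. Recall that $E_{\{n-1,n\}} = \sum_{B \subseteq \{n-1,n\}} (-1)^{|B|} e_B$ is an idempotent in $\FF[R_n]$, that $Y_{n-1} = \sum_{i=1}^{n-2} E_{\{i,n-1\}}(i,n-1)$ and $Y_n = \sum_{i=1}^{n-1} E_{\{i,n\}}(i,n)$, and that $s_{n-1} = (n-1,n)$. Since $E_{\{n-1,n\}}$ is a linear combination of diagonal idempotents, it commutes with every idempotent $E_A$; the whole question is therefore about how $E_{\{n-1,n\}}$ interacts with the transpositions $(i,n-1)$, $(i,n)$ and $(n-1,n)$ that appear.

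First I would handle $s_{n-1} = (n-1,n)$. The key observation is that conjugation by $(n-1,n)$ fixes the set $\{n-1,n\}$, hence permutes the subsets $B \subseteq \{n-1,n\}$ among themselves preserving cardinality; concretely $(n-1,n) e_B (n-1,n) = e_{(n-1,n)B}$, and summing over $B$ shows $(n-1,n) E_{\{n-1,n\}} (n-1,n) = E_{\{n-1,n\}}$, i.e. $E_{\{n-1,n\}} s_{n-1} = s_{n-1} E_{\{n-1,n\}}$. Next, for $Y_n$, I would show $E_{\{n-1,n\}}$ commutes with each summand $E_{\{i,n\}}(i,n)$ for $1 \le i \le n-1$. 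For $i \le n-2$, conjugation by $(i,n)$ sends $e_{\{n-1\}} \mapsto e_{\{n-1\}}$ and $e_{\{n\}} \mapsto e_{\{i\}}$, so $(i,n) E_{\{n-1,n\}} = E_{\{n-1,i\}}(i,n)$; on the other hand $E_{\{i,n\}}$ commutes with both $E_{\{n-1,n\}}$ and $E_{\{n-1,i\}}$ (all idempotents commute), and one checks the product $E_{\{i,n\}} E_{\{n-1,n\}} = E_{\{i,n\}} E_{\{n-1,i\}}$ as idempotents in the commutative idempotent semigroup (both equal the idempotent supported away from $\{i,n-1,n\}$ in the appropriate sense), which yields $E_{\{n-1,n\}} E_{\{i,n\}}(i,n) = E_{\{i,n\}}(i,n) E_{\{n-1,n\}}$. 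The summand with $i = n-1$ is $E_{\{n-1,n\}}(n-1,n) = E_{\{n-1,n\}} s_{n-1}$, which commutes with $E_{\{n-1,n\}}$ by the case already done. The argument for $Y_{n-1} = \sum_{i=1}^{n-2} E_{\{i,n-1\}}(i,n-1)$ is entirely symmetric, interchanging the roles of $n-1$ and $n$: for each $i \le n-2$, conjugation by $(i,n-1)$ fixes $e_{\{n\}}$ and swaps $e_{\{n-1\}}$ with $e_{\{i\}}$, and the same idempotent manipulation closes the case.

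The main obstacle, such as it is, lies in correctly tracking the idempotents $E_A$ under conjugation and in verifying the needed identities among products like $E_{\{i,n\}} E_{\{n-1,n\}}$; the cleanest route is to note that $E_A = \prod_{a \in A}(1 - e_{\{a\}})$ (a product of commuting idempotents, using $e_{\{a\}}e_{\{a'\}} = e_{\{a,a'\}}$), so that $E_A E_{A'} = \prod_{a \in A \cup A'}(1 - e_{\{a\}}) = E_{A \cup A'}$, and conjugation by a transposition $\tau$ simply sends $E_A \mapsto E_{\tau(A)}$. With that bookkeeping in place, each of the three commutations reduces to the statement $\tau \cdot E_{\{n-1,n\}} = E_{\tau(\{n-1,n\})} \cdot \tau$ combined with $E_{\{n-1,n\}} E_{\{i,n\}} = E_{\{i,n-1,n\}} = E_{\{n-1,i\}} E_{\{i,n\}}$, and the whole proof is a short routine computation. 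I would simply write out the three cases along these lines and conclude.
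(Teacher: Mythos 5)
Your proof is correct, and it follows exactly the route the paper intends: the paper's own proof is just the remark ``this is a straightforward computation,'' and your verification via $E_A=\prod_{a\in A}(1-e_{\{a\}})$, the rules $E_AE_{A'}=E_{A\cup A'}$ and $\tau E_A\tau^{-1}=E_{\tau(A)}$, supplies precisely the details of that computation (including the $i=n-1$ summand of $Y_n$ and the conjugation bookkeeping for each transposition). Nothing further is needed.
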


\begin{proof}
This is a straightforward computation. 
\end{proof}

From Theorem~\ref{thm:JM}, we have that the eigenvalues of the actions of 
$X_i$ and $Y_i$ on all elements of the Gelfand--Zeitlin basis of simple
$\mathbb{C}[R_n]$-modules are integers. It follows that the eigenvalues 
of the actions of  $X_i$ and $Y_i$ on all $\mathbb{C}[R_n]$-modules are integers.
The following lemma describes an analogue of the latter statement over
$\Bbbk$.

\begin{lemma}\label{lem6.6}
Let $M\in \FF[R_n]\Mod$. Then, for $1\leq i\leq n$, the eigenvalues 
of the linear operators $X_i,Y_i\in\End_{\FF}(M)$ lie in $\FF_p$.
\end{lemma}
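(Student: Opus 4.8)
The strategy is to reduce the statement about arbitrary modules over $\FF[R_n]$ to the already-known statement about the symmetric group algebras $\FF[S_k]$ for $k \le n$. The key structural input is the equivalence of categories from Lemma~\ref{lm:Requi} (with $r=1$):
\begin{displaymath}
\bigoplus_{k=0}^{n} \mathcal{L}_k^n : \bigoplus_{k=0}^{n} \FF[S_k]\Mod \xrightarrow{\ \sim\ } \FF[R_n]\Mod,
\end{displaymath}
so that every $M \in \FF[R_n]\Mod$ decomposes as $M \cong \bigoplus_{k=0}^n \mathcal{L}_k^n(N_k)$ for suitable $N_k \in \FF[S_k]\Mod$. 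Since eigenvalues of a block-diagonal operator are the union of the eigenvalues on the summands, it suffices to prove the claim when $M = \mathcal{L}_k^n(N) = \FF\mathbb{L}_k^n \otimes_{\FF[S_k]} N$ for a single $k$ and a single $\FF[S_k]$-module $N$. So first I would record this reduction explicitly.

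\textbf{The two types of generators.} The Jucys--Murphy elements split into the idempotents $X_j = E_{\{j\}}$ and the transposition-sums $Y_j = \sum_{i=1}^{j-1} E_{\{i,j\}}(i,j)$, and they behave differently. For $X_j$: it is an idempotent of $\FF[R_n]$, hence $X_j^2 = X_j$ as an operator on any module, so the minimal polynomial of $X_j$ divides $t^2 - t$ and its eigenvalues lie in $\{0,1\} \subseteq \FF_p$. This disposes of the $X_i$ immediately and does not even need the reduction. For $Y_j$, the plan is to compute how $Y_j$ acts through the bimodule $\FF\mathbb{L}_k^n$. Working in the $\FF[S_k]$-basis $\{h_Z^n : Z \in \mathcal{S}_k\}$ of $\FF\mathbb{L}_k^n$ and tracing through the left action of $Y_j$ exactly as in the proof of Theorem~\ref{thm:constirre} (Cases 1 and 2 there, and the computation of $Y_m$ in the proof of Theorem~\ref{thm:JM}), one finds that $Y_j$ acts on $\mathcal{L}_k^n(N)$ as a block-upper-triangular operator whose diagonal blocks are either $0$ (on the part of $\FF\mathbb{L}_k^n \otimes N$ indexed by $Z$ with $j \notin Z$, or with $j \in Z$ but $j$ not in the ``active'' range) or are given by the action of a classical Jucys--Murphy element $\widetilde{Y}_l = \sum_{i=1}^{l-1}(i,l)$ of $\FF[S_k]$ on $N$ (when $j \in Z$, with $j$ the $l$-th smallest element of $Z$). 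Hence the eigenvalues of $Y_j$ on $\mathcal{L}_k^n(N)$ are contained in $\{0\} \cup \bigcup_{l} \{\text{eigenvalues of } \widetilde{Y}_l \text{ on } N\}$.

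\textbf{Conclusion and the main obstacle.} By the cited fact that the eigenvalues of the symmetric-group Jucys--Murphy elements $\widetilde{Y}_l$ on any $\FF[S_k]$-module lie in $\FF_p$ (this is \cite[Lemma~2.2]{KB}, quoted in the excerpt just above Lemma~\ref{lm:Sblock}, with $0 \in \FF_p$), every eigenvalue of $Y_j$ on $M$ lies in $\FF_p$, completing the proof. The one step that requires genuine care — the ``hard part'' — is the explicit identification of the action of $Y_j$ on $\FF\mathbb{L}_k^n \otimes_{\FF[S_k]} N$ with (a triangular extension of) the classical $\widetilde{Y}_l$: one must check that when $h_Z^n$ is hit by the summand $E_{\{i,j\}}(i,j)$, the element $(h_{Z'}^n)^{tr} E_{\{i,j\}}(i,j) h_Z^n$ that re-appears after moving back into the $\FF[S_k]$-factor is either zero or exactly the classical transposition $(i',l) \in S_k$ (where $i',l$ are the positions of $i,j$ within $Z$), and that the ``mixing'' terms between different $Z$'s only contribute strictly above the diagonal with respect to a suitable ordering (e.g.\ by whether $j \in Z$), so that they do not enlarge the spectrum. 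This is essentially the same bookkeeping already carried out in the proofs of Theorems~\ref{thm:constirre} and~\ref{thm:JM}, so it should go through routinely, but it is where all the actual content sits.
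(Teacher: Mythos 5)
Your argument is correct, but it follows a genuinely different route from the paper. The paper's proof stays with the regular module and reduces modulo $p$: by Theorem~\ref{thm:JM} the eigenvalues of $X_i$ and $Y_i$ on all $\CC[R_n]$-modules, in particular on the regular one, are integers; since $X_i,Y_i\in\ZZ[R_n]$, the characteristic polynomial of their action on the regular module has integer entries and integer roots, and its reduction modulo $p$ (which governs the action on any $\FF[R_n]$-module) therefore has all roots in $\FF_p$. You instead work entirely in characteristic $p$: you use the equivalence of Lemma~\ref{lm:Requi} to reduce to $M=\mathcal{L}_k^n(N)$, dispose of the $X_j$ by idempotency, and identify the action of $Y_j$ on $\FF\mathbb{L}_k^n\otimes_{\FF[S_k]}N$ with that of the classical Jucys--Murphy elements $\widetilde{Y}_s$ of $\FF[S_k]$, then quote \cite[Lemma~2.2]{KB}. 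The ``hard part'' you flag is in fact easier than you fear and is carried out verbatim later in the paper, in the proof of Lemma~\ref{lm:RSblock}: one has $Y_j(h_Z^n\otimes v)=h_Z^n\otimes\widetilde{Y}_s v$ when $j\in Z$ is its $s$-th smallest element, and $Y_j(h_Z^n\otimes v)=0$ when $j\notin Z$, so the action is block-\emph{diagonal} with respect to the free basis $\{h_Z^n\}$ --- there are no mixing terms at all, and no triangularity argument is needed. Comparing the two: the paper's reduction-mod-$p$ argument is shorter and avoids any computation in the bimodule (at the cost of silently using that eigenvalues on an arbitrary module occur among the roots of the characteristic polynomial on the regular module), while your approach yields strictly finer information --- it describes the generalized eigenspace decomposition of $\mathcal{L}_k^n(N)$ explicitly and thus essentially anticipates Lemma~\ref{lm:RSblock}, and it does not rely on the characteristic-zero result of Theorem~\ref{thm:JM} for the $Y_i$, only on the known symmetric-group fact.
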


\begin{proof}
As we just mentioned, from Theorem~\ref{thm:JM} we know that the
eigenvalues of the actions of $X_i$ and $Y_i$ on all $\mathbb{C}[R_n]$-modules 
are integers. In particular, this applies to the regular
$\mathbb{C}[R_n]$-module. This means that the characteristic polynomial
of $X_i$ (or $Y_i$) on the regular $\mathbb{C}[R_n]$-module has integral roots
over $\mathbb{C}$. 

Since Jucys--Murphy elements are in $\mathbb{Z}[R_n]$,
over the field $\Bbbk$ we just need to reduce the above characteristic polynomial
modulo $p$. Since the original polynomial had integral roots,
the reduction modulo $p$ will have roots in $\Bbbk_p$. The claim follows.
\end{proof}	

As an immediate consequence of Lemma~\ref{lem6.6}, we have:

\begin{corollary}
For $M\in \FF[R_n]\Mod$,  we have $M\cong \displaystyle{\bigoplus_{\substack{\underline{i}\in \FF_{p}^n\\ 
\underline{j}\in\{0,1\}^n}}}M_{\underline{i},\underline{j}},$
where 
\begin{displaymath}
M_{\underline{i},\underline{j}}=\{m\in M\mid (X_k-j_k)^Nm=0, 
(Y_k-i_k)^Nm=0, \textrm{ for } k\in[n]\text{ and }N\gg 0\}.
\end{displaymath}
\end{corollary}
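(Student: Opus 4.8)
The plan is to reduce the claimed decomposition to the commuting-operators structure already established, essentially mimicking the symmetric-group argument but accounting for the two families $X_k$ and $Y_k$. First I would invoke Proposition~\ref{prop:JMcommute}, which guarantees that all the operators $X_1,\dots,X_n,Y_1,\dots,Y_n$ pairwise commute; hence the corresponding linear operators on any finite-dimensional $M\in\FF[R_n]\Mod$ form a commuting family of endomorphisms. A commuting family of operators on a finite-dimensional vector space admits a simultaneous generalized eigenspace decomposition, indexed by tuples of eigenvalues. Thus $M$ decomposes as the direct sum of the spaces $M_{\underline{i},\underline{j}}$ ranging over all tuples $\underline{i}\in\FF^n$, $\underline{j}\in\FF^n$ that actually occur, with $M_{\underline{i},\underline{j}}$ defined exactly as in the statement.

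Next I would pin down the ranges of the eigenvalues. By Lemma~\ref{lem6.6}, every eigenvalue of each $X_k$ on $M$ lies in $\FF_p$ and every eigenvalue of each $Y_k$ on $M$ lies in $\FF_p$; so the index $\underline{i}$ ranges over $\FF_p^n$. For the $X_k$ the range can be cut down further: each $X_k$ is an idempotent-like element — indeed $X_k = Q_k - P_k$ with $Q_k,P_k$ commuting diagonal idempotent-type matrices (more precisely $Q_k$ of order $r$ and $P_k^2=P_k$, $P_kQ_k=0$), and for $r=1$ we simply have $X_k=E_{\{k\}}$, an idempotent in $\ZZ[R_n]$. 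Since $X_k$ is an idempotent, $X_k^2=X_k$, so its only eigenvalues are $0$ and $1$; hence on a generalized eigenspace the generalized eigenvalue equals the eigenvalue and $j_k\in\{0,1\}$. Therefore the index $\underline{j}$ ranges over $\{0,1\}^n$, matching the statement.

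Finally I would note that every summand $M_{\underline{i},\underline{j}}$ that is nonzero appears in the indexing set $\FF_p^n\times\{0,1\}^n$ given in the corollary, and any index in that set for which the generalized eigenspace is zero simply contributes the zero module to the sum; this is harmless. Combining, $M\cong\bigoplus_{\underline{i}\in\FF_p^n,\ \underline{j}\in\{0,1\}^n}M_{\underline{i},\underline{j}}$, which is the assertion.

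\textbf{Main obstacle.} The only genuinely substantive point is the restriction $j_k\in\{0,1\}$, i.e.\ that $X_k$ is an idempotent in $\FF[R_n]$ (equivalently that $X_k^2=X_k$). This is immediate from the description $X_k=E_{\{k\}}$ recalled at the start of the "case of rook monoids" subsection together with the fact, stated when $E_A$ was introduced, that each $E_A$ is an idempotent; so it is routine rather than hard. Everything else is the standard fact that a commuting family of operators on a finite-dimensional space splits into joint generalized eigenspaces, applied via Proposition~\ref{prop:JMcommute} and Lemma~\ref{lem6.6}.
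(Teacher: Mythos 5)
Your argument is correct and essentially the same as the paper's (implicit) proof: the corollary is stated there as an immediate consequence of Lemma~\ref{lem6.6}, i.e.\ the commuting family of Jucys--Murphy operators from Proposition~\ref{prop:JMcommute}, eigenvalues in $\FF_p$, and the fact that for $r=1$ each $X_k=E_{\{k\}}$ is an idempotent, which yields the simultaneous generalized eigenspace decomposition with $j_k\in\{0,1\}$. The only slip is your parenthetical claim $P_kQ_k=0$ (in fact $P_kQ_k=P_k$), but this aside about general $r$ plays no role, since in the relevant case $r=1$ the idempotency of $X_k=E_{\{k\}}$ is immediate.
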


For $\underline{i}\in \FF_p^n$, 
$\underline{j}\in\{0,1\}^n$ and $r\in \FF_p$, define
\begin{displaymath}
	\gamma_{r}:=\vert\{k\in[n]\mid i_k=r, j_k=1\}\vert.
\end{displaymath}
Then $\displaystyle(\sum_{r=0}^{p-1}\gamma_r)\leq n$ and
$\ty(\underline{i},\underline{j}):=(\gamma_0,\ldots,\gamma_{p-1})\in \NN^p$ 
is called the {\em weight} of $(\underline{i},\underline{j})$. 
For $\gamma\in \NN^p$, define
\begin{displaymath}
M[\gamma]:=\bigoplus_{\substack{\underline{i}\in \FF_{p}^n,\,
\underline{j}\in\{0,1\}^n\\\ty(\underline{i},\underline{j})=\gamma}} 
M_{\underline{i},\underline{j}}.
\end{displaymath}

The next lemma is motivated by the classical result that the center of 
$\FF[S_n]$ is generated by the elementary symmetric polynomials in 
Jucys--Murphy elements, see e.g. \cite{FHG} and \cite{Jucys}.

\begin{lemma}\label{lm:Rcenter}
The center of $\FF[R_n]$ is generated by the elementary symmetric 
polynomials in $\{X_{k}\mid k\in[n]\}$ and the elementary 
symmetric polynomials in $\{Y_{k}\mid k\in[n]\}$. 
\end{lemma}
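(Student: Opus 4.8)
The plan is to reduce the statement to the known structure of $\FF[R_n]$ via the equivalence of Lemma~\ref{lm:Requi}, which for $r=1$ identifies $\FF[R_n]\Mod$ with $\bigoplus_{i=0}^{n}\FF[S_i]\Mod$, equivalently $\FF[R_n]\cong\bigoplus_{i=0}^{n}\End_{\FF}(\FF\LL_i^n)$ as an $\FF$-algebra, where each summand is a full matrix algebra over $\FF$ (since $\FF\LL_i^n$ is a free right $\FF[S_i]$-module, $\End_{\FF[S_i]}(\FF\LL_i^n)$ is a matrix algebra over $\FF[S_i]$; but here we want the full endomorphism algebra over $\FF$ of the underlying space, whose center is just $\FF$ on each block). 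Concretely, the center $Z(\FF[R_n])=\bigoplus_{i=0}^{n} Z(\End_{\FF}(\FF\LL_i^n))$; however this is too coarse — one must instead use that $\FF[R_n]\cong\bigoplus_{i=0}^n M_{?}(\FF[S_i])$ so that $Z(\FF[R_n])\cong\bigoplus_{i=0}^n Z(\FF[S_i])$, and $Z(\FF[S_i])$ is generated by the elementary symmetric polynomials in the symmetric-group Jucys--Murphy elements $\widetilde Y_1,\dots,\widetilde Y_i$ by \cite{FHG,Jucys}. So the strategy has two halves: first show that $Z(\FF[R_n])\cong\bigoplus_{i=0}^n Z(\FF[S_i])$, and second match up the generators.

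First I would set up the decomposition. Using the primitive central idempotents: let $\varepsilon_i\in\FF[R_n]$ be the central idempotent projecting onto the block $\End_\FF(\FF\LL_i^n)$ (these exist since the blocks are two-sided ideals whose sum is $\FF[R_n]$; explicitly one can build them from the $E_{\{\cdot\}}$ idempotents — indeed $X_j=E_{\{j\}}$ are commuting idempotents and $\prod_{j}(1-X_j\text{ or }X_j)$-type products cut out the rank strata). A cleaner route: the element $X_1+X_2+\cdots+X_n=\sum_j E_{\{j\}}$ acts on the rank-$i$ block $\FF\LL_i^n\otimes(-)$ as the scalar… no, $E_{\{j\}}$ is not central. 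The right statement is: $\sum_{j=1}^n X_j$ is central (it is a sum over all of $[n]$ of the idempotents $E_{\{j\}}$, and conjugation by $S_n\subset R_n$ permutes these, while $P,Q$-type generators fix the set; more carefully one checks $\sum_j E_{\{j\}}$ commutes with all generators), and more generally the elementary symmetric polynomials $e_k(X_1,\dots,X_n)$ are central. Granting this (Proposition~\ref{prop:JMcommute} gives commutativity of the $X$'s and $Y$'s among themselves; centrality is the extra input), one argues: on the simple module $V^n_{\lambda^{(r)}}$ indexed by $\lambda\vdash i$, the multiset of eigenvalues of $\{X_1,\dots,X_n\}$ is $i$ copies of $1$ and $n-i$ copies of $0$ (by Theorem~\ref{thm:JM} with $r=1$: $X$ acts as $1$ on $i\in L$ and $0$ otherwise), and the multiset of eigenvalues of $\{Y_1,\dots,Y_n\}$ on $V^n_\lambda$ is $\{\ct(\text{box})\}$ over boxes of $\lambda$ together with $n-i$ zeros. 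Hence $e_k(X)$ acts as the scalar $\binom{i}{k}$ and $e_k(Y)$ acts as $e_k$ of the contents (with extra zeros) — and these scalars, taken together over $k$, determine $i$ (from the $X$-data, since $\binom{i}{1}=i$) and then, within rank $i$, determine the partition $\lambda$ exactly as in the classical symmetric-group case.

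The main argument then runs: let $\mathcal A\subseteq Z(\FF[R_n])$ be the subalgebra generated by the $e_k(X)$ and $e_k(Y)$. By the eigenvalue computation above, $\mathcal A$ separates the blocks (the $e_k(X)$ alone do, via the scalars $\binom ik$, using $p$-adic/Lucas-type non-vanishing only needs $e_1(X)=\sum X_j$ taking value $i\bmod p$ — wait, that only separates $i\bmod p$; so one needs the higher $e_k(X)$ or a sharper argument: the joint values $(\binom i1,\binom i2,\dots,\binom in)\bmod p$ determine $i\in\{0,\dots,n\}$, which is true since $\binom ii=1\neq0$ distinguishes — actually the simplest fix: $\prod$ or just note the generating function $\sum_k e_k(X)t^k=\prod_j(1+X_jt)=(1+t)^i$ as operators on the rank-$i$ block, and $(1+t)^i$ for $0\le i\le n$ are distinct polynomials over $\FF$, distinct even mod $p$ as polynomials of degree $i$). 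So $\mathcal A$ contains the block idempotents $\varepsilon_i$. Then $\varepsilon_i\mathcal A$ is generated by $\varepsilon_i e_k(Y)$, and since on block $i$ the $X_j$'s contribute nothing to $Y$-eigenvalues and the $Y_j$'s restrict to (a relabeling of) the symmetric-group Jucys--Murphy elements $\widetilde Y_1,\dots,\widetilde Y_i$ plus $n-i$ copies of $0$ (from the Remark: the rook $Y_j$ and symmetric-group $\widetilde Y_j$ agree on the common block, up to the $E_{\{i,j\}}$-truncation which acts as identity after applying the equivalence), $\varepsilon_i e_k(Y)$ generate $Z(\FF[S_i])\cdot\varepsilon_i$ by \cite{FHG,Jucys}. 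Summing over $i$ gives $\mathcal A=\bigoplus_i Z(\FF[S_i])\varepsilon_i=Z(\FF[R_n])$, which is the claim. The main obstacle I anticipate is the bookkeeping for the second half — showing cleanly that $\varepsilon_i e_k(Y)$ really does generate $Z(\FF[S_i])$ and not just a subalgebra thereof; this requires identifying $\varepsilon_i Y_j\varepsilon_i$ with $\widetilde Y_{\sigma(j)}$ under the Morita equivalence of Lemma~\ref{lm:Requi}, i.e. verifying that the $Y_j$'s act on $\mathcal L_i^n(W^i_\lambda)$ compatibly with the symmetric-group Jucys--Murphy action on $W^i_\lambda$ — which is essentially contained in the proof of Theorem~\ref{thm:JM} but needs to be extracted module-by-module rather than basis-vector-by-basis-vector. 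A secondary subtlety is proving centrality of the $e_k(X)$ and $e_k(Y)$ in the first place, which (unlike in the symmetric group, where it is classical) should be checked by a direct computation with the generators $s_j$, $P$, $Q$ of $R_n$, using that conjugation by $s_j$ permutes $\{X_1,\dots,X_n\}$ and $\{Y_1,\dots,Y_n\}$ while $P$-conjugation is handled by the idempotent truncations $E_{\{\cdot\}}$.
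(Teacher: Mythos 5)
Your proposal is correct in its essential strategy, but it takes a genuinely different route from the paper, even though both arguments rest on the same two pillars: the inverse-monoid decomposition $\FF[R_n]\cong\bigoplus_{i=0}^{n}M_{\binom{n}{i}}(\FF[S_i])$, which gives $Z(\FF[R_n])\cong\bigoplus_{i}Z(\FF[S_i])$, and the classical Farahat--Higman/Jucys description of $Z(\FF[S_i])$ via symmetric polynomials in the $\widetilde{Y}_k$. The paper works element by element: it exhibits the explicit central basis $c_{\lambda}=\sum_{\sigma\in\mathcal{M}_{\lambda}}E_{\mathcal{R}(\sigma)}\sigma$, factors each basis element as $d_r\big(\sum_{\tau}\tau\big)g_{n-r}$ with $d_r=e_r(X_1,\dots,X_n)$, and converts the class sum $f_{\lambda}(\widetilde{Y}_1,\dots,\widetilde{Y}_n)$ into a polynomial in the rook elements $Y_k$ using the relation $E_Ae_i=0$ for $i\in A$; the rank idempotents are never constructed from the $X_k$ and no transfer through the equivalence of Lemma~\ref{lm:Requi} is invoked. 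You instead first produce the central rank idempotents $\varepsilon_i$ inside the subalgebra generated by the $e_k(X)$, via $e_k(X)=\sum_i\binom{i}{k}\varepsilon_i$ together with the observation that the vectors $\big(\binom{i}{k}\big)_k$, i.e. the polynomials $(1+t)^i$ reduced mod $p$, are pairwise distinct for $0\leq i\leq n$, and then identify $\varepsilon_i e_k(Y)$ with $e_k(\widetilde{Y}_1,\dots,\widetilde{Y}_i)$ blockwise under the Morita equivalence, so that the symmetric-group theorem can be quoted block by block. Your version is more structural and makes the blockwise matching of generators transparent; the paper's is more elementary and stays entirely inside $\FF[R_n]$. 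Two points in your write-up need repair but not rethinking: first, since the lemma lives in characteristic $p$, the scalar/eigenvalue statements must be grounded in the characteristic-free basis computations $X_k(h_Z^n\otimes v)=h_Z^n\otimes v$ or $0$, and $Y_kh_Z^n=h_Z^n\widetilde{Y}_s$ (exactly as in the proof of Lemma~\ref{lm:RSblock}), rather than in Theorem~\ref{thm:JM}, which is a statement over $\CC$ about simple modules; second, conjugation by $s_j$ does \emph{not} permute the $Y_k$ (for $r=1$ one has $s_jY_js_j=Y_{j+1}-E_{\{j,j+1\}}s_j$), so the centrality of the $e_k(Y)$ requires the slightly longer check, which is in any case supplied by Proposition~\ref{prop:gencenter}.
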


Before we prove Lemma~\ref{lm:Rcenter}, we need to introduce some notation 
that we will use in the proof. For $\sigma\in R_n$, let $\mathcal{C}(\sigma)$ 
and $\mathcal{R}(\sigma)$ be the sets of indexes for all non-zero columns and
rows of $\sigma$, respectively. When $\mathcal{C}(\sigma)=\mathcal{R}(\sigma)$ and 
$|\mathcal{R}(\sigma)|=r$, there is a unique order preserving bijection 
$\mathbf{r}\to \mathcal{C}(\sigma)$ and $\sigma$ can be thought of as an 
element $\sigma'$ in $S_r$. We define the {\em cycle type} of 
$\sigma\in R_n$ as the cycle type of $\sigma'\in S_r$, see \cite{GM} for details.

\begin{example}
The element  
$$\sigma=\begin{bmatrix}
0 & 0 & 1\\
0 & 0 & 0\\
1 & 0 & 0
\end{bmatrix}\in R_3 $$ 
can be realized, by the above, as the transposition $(1,2)\in S_2$. 
So, the cycle type of this $\sigma$ is $(2)$, i.e. it has one cycle of length two.
\end{example}

\begin{proof}
Let us denote by $Z$ the subalgebra of $\Bbbk[R_n]$ spanned by all 
elementary symmetric polynomials in the $X_i$ and all 
elementary symmetric polynomials in the $Y_i$. It is a straightforward
exercise to check that $Z$ is contained in the center of $\Bbbk[R_n]$
(in fact, below we explicitly compute the elementary symmetric polynomials 
in the $X_i$ and the fact that these are central follow e.g. 
from \cite{Benadvances}).
Below we show the converse inclusion, i.e.  the center 
of $\Bbbk[R_n]$ is contained in $Z$.

For $ 0\leq r\leq n$ and $\lambda\vdash r$, let 
\begin{align*}
&\mathcal{M}_{\lambda}=\{\sigma\in R_{n}\mid \mathcal{C}(\sigma)=
\mathcal{R}(\sigma) \text{ and the cycle type of } \sigma=\lambda\}\\
&   c_{\lambda}= \sum_{\sigma\in \mathcal{M}_{\lambda}}E_{\mathcal{R}(\sigma)}\sigma.
\end{align*}
Note that $c_{\varnothing}=e_{ [n]}$ is the zero element of $R_n$.

Our first claim is that $\{c_{\lambda}\mid \lambda\vdash r, 0\leq r\leq n\}$ is a 
basis of the center of $\FF[R_n]$. This follows easily from the construction 
by combining the following three well-known facts:
\begin{itemize}
\item The center of $\FF[S_n]$ has the obvious basis indexed by the cycle types for $S_n$,
in which the basis element corresponding to a fixed cycle type  is just the
sum of all elements in $S_n$ which have this cycle  type.
\item For any $m$, the center the algebra of $m\times m$ matrices over $\FF[S_n]$ has the obvious
basis indexed by cycle types for $S_n$, in which the basis element corresponding to a 
fixed cycle type is just the identity matrix times the corresponding basis
element for  the center of $\FF[S_n]$.
\item Since $R_n$ is an inverse monoid, the monoid algebra $\FF[R_n]$ is isomorphic
to a direct sum of matrix algebras corresponding to the equivalence classes 
(with respect to Green's $\mathcal{D}$-relation) of the
maximal subgroups in $R_n$. The latter subgroups are of the form $S_k$, for $0\leq k\leq n$.
For each such $S_k$, the rows and columns in the corresponding matrix algebra 
are naturally indexes by all $k$-element subsets of $[n]$ and the
idempotents cutting out the $\Bbbk[S_k]$-parts are exactly the elements
$E_{\mathcal{R}(\sigma)}$, see e.g. \cite{Benadvances,Steinberg}.
\end{itemize}

Consider the element
$$d_{r}:=\displaystyle{\sum_{A\subseteq [n],\,|A|=r}} E_{A}.$$
Note that $X_1=Q-P=e_\varnothing-e_{\{1\}}=E_{\{1\}}$ and hence
$X_k=E_{\{k\}}$, for all $1\leq k\leq n$. In particular,
$d_1=X_1+X_2+\cdots+X_n$.
Further, for $1\leq i_1<i_2< \cdots <i_{r}\leq n$, we have 
$$X_{i_1}X_{i_2} \cdots X_{i_r}=E_{\{i_1\}}E_{\{i_2\}} \cdots E_{\{i_r\}}=
E_{\{i_1,i_2,\dots,i_r\}}$$ 
implying
$$d_{r}={\sum_{\substack{i_1,\ldots,i_r\in[n]\\ i_{1}< \cdots<i_r}} 
X_{i_1}X_{i_2} \cdots X_{i_r}},$$ 
which is an elementary symmetric polynomial in the elements $\{X_{k}\mid k\in[n]\}$. 

Next, by induction on $r$, one shows that the element
\begin{align*}
g_{r}:=&\displaystyle{\sum_{\substack{i_1,\ldots,i_r\in[n]\\ i_{1}< \cdots<i_r}}} e_{\{i_1\}}e_{\{i_2\}} \cdots e_{\{i_{r}\}}
\end{align*}
is a linear combination of $d_0,d_1,\dots,d_r$ and hence belongs to $Z$.

Further, note that 
\begin{align}\label{al:key}
E_{A}e_i=0 & \text{ if } i\in A.
\end{align}

For $\lambda\vdash r$, let $\tilde{\lambda}$ be the partition of $n$ obtained by adding $1^{(n-r)}$ to $\lambda$ at the end. Using \eqref{al:key}, we can write
\begin{align*}
	c_{\lambda}=d_{r} \bigg(\sum_{\substack{\tau\in S_n,\\ \text{ cycle type of }\tau=\tilde{\lambda}}} \tau\bigg)g_{n-r}.
	\end{align*}

By the classical results for symmetric groups, there exists a symmetric 
polynomial $f_{\lambda}$ in $n$ variables such that 
\begin{displaymath}
f_{\lambda}(\tilde{Y}_{1},\ldots,\tilde{Y}_n)=
\sum_{\substack{\tau\in S_n,\\ \text{ cycle type of }\tau=\tilde{\lambda}}} \tau.
\end{displaymath}
One again, using \eqref{al:key}, we conclude that $c_{\lambda}=d_rf_{\lambda}(Y_1,\ldots,Y_n)g_{n-r}\in Z$. 
\end{proof}

Note that, over $\mathbb{C}$, an analogue of Lemma~\ref{lm:Rcenter} 
is also true for the generalized rook monoid algebras:

\begin{proposition}
The center of $\CC[C_r\wr R_n]$ is generated by the symmetric polynomials in 
$\{X_i\mid i\in[n]\}$ and the symmetric polynomials in $\{Y_i\mid i\in[n]\}$.	
\end{proposition}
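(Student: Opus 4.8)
The plan is to transcribe the proof of Lemma~\ref{lm:Rcenter}, replacing each ingredient about $S_m$ by its counterpart for $C_r\wr S_m$. Write $Z$ for the subalgebra of $\CC[C_r\wr R_n]$ generated by the symmetric polynomials in $\{X_i\mid i\in[n]\}$ together with those in $\{Y_i\mid i\in[n]\}$; by Proposition~\ref{prop:JMcommute} these all commute, so $Z$ is a commutative subalgebra, and we must identify it with the center. That $Z$ is central is the easy half: since $\CC[C_r\wr R_n]$ is semisimple it suffices to check that each generator of $Z$ acts by a scalar on every simple module $V^n_{\lambda^{(r)}}$, and by Theorem~\ref{thm:JM} the $X_i$ and $Y_i$ act diagonally on the Gelfand--Zeitlin basis $\{v_L\}$ with eigenvalues $\sgn_L(i)$ and $\ct(L(i))$ (both $0$ when $i\notin L$). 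Hence a symmetric polynomial in the $X_i$ acts on $v_L$ through its value on the multiset $\{\sgn_L(i)\mid i\in[n]\}$, which is $\xi^{k-1}$ with multiplicity $|\lambda_{(k)}|$ for $1\le k\le r$ together with $0$ with multiplicity $n-|\lambda^{(r)}|$ and hence depends only on $\lambda^{(r)}$; similarly a symmetric polynomial in the $Y_i$ acts through its value on the multiset union of the box-contents of $[\lambda_{(1)}],\dots,[\lambda_{(r)}]$, padded by $n-|\lambda^{(r)}|$ zeros. In either case the element is scalar on $V^n_{\lambda^{(r)}}$, so $Z$ is contained in the center.

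For the reverse inclusion I would first record a basis of the center. By (the proof of) Lemma~\ref{lm:Requi} one has $\CC[C_r\wr R_n]\cong\bigoplus_{m=0}^{n}\End_{\CC[C_r\wr S_m]}(\CC\LL_m^n)$, a direct sum of matrix algebras over the $\CC[C_r\wr S_m]$ with rows and columns indexed by $\mathcal S_m$ via the cross-section $\{h_Z^n\}$, and --- exactly as in Lemma~\ref{lm:Rcenter} --- the idempotents $E_{\mathcal R(\sigma)}$ are those cutting out the $\CC[C_r\wr S_m]$-parts. Thus the center is $\bigoplus_{m=0}^{n}Z(\CC[C_r\wr S_m])$ and, for $0\le m\le n$ and each conjugacy class $\rho$ of $C_r\wr S_m$, the element
\[
c_\rho:=\sum_{\sigma\in\mathcal M_\rho}E_{\mathcal R(\sigma)}\,\sigma,\qquad
\mathcal M_\rho:=\{\,\sigma\in C_r\wr R_n\mid \mathcal C(\sigma)=\mathcal R(\sigma)\ \text{and}\ \sigma\ \text{has type}\ \rho\,\}
\]
is central (``$\sigma$ has type $\rho$'' meaning that $\sigma$, viewed on its support as an element of $C_r\wr S_m$, lies in $\rho$), and these $c_\rho$ form a basis of the center. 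It remains to prove $c_\rho\in Z$ for every $\rho$.

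The $X$-variables handle the idempotents: from $X_i=Q_i-P_i$ with $P_i=e_{\{i\}}$ one readily computes $X_i^r=Q_i^r-P_i=1-e_{\{i\}}=E_{\{i\}}$, whence $E_A=\prod_{i\in A}X_i^r$ for every $A\subseteq[n]$ and therefore
\[
d_m:=\sum_{|A|=m}E_A=e_m(X_1^r,\dots,X_n^r),\qquad
g_m:=\sum_{i_1<\dots<i_m}e_{\{i_1\}}\cdots e_{\{i_m\}}=e_m(1-X_1^r,\dots,1-X_n^r)
\]
both lie in $Z$. The group-algebra factor is then handled as in Lemma~\ref{lm:Rcenter}: one invokes the $C_r\wr S_n$-analogue of the classical theorem that the center of a symmetric group algebra is generated by symmetric functions in its Jucys--Murphy elements, so that the class sum in $\CC[C_r\wr S_n]$ over the type obtained from $\rho$ by adjoining $n-m$ trivial fixed points equals $f_\rho$ evaluated at the Jucys--Murphy-type elements of $\CC[C_r\wr S_n]$ (the colour elements $Q_i$ and the sums $\frac1r\sum_{j<i}\sum_{l=0}^{r-1}\xi_j^l\xi_i^{-l}(j,i)$, which differ from $Y_i$ only by the omission of the idempotents $E_{\{j,i\}}$); then, using $E_Ae_i=0$ for $i\in A$ as in \eqref{al:key}, one obtains $c_\rho=d_m\,f_\rho(X_1,\dots,X_n,Y_1,\dots,Y_n)\,g_{\,n-m}\in Z$, finishing the proof.

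\emph{The main obstacle} is this last step: the $C_r\wr S_n$-version of the Farahat--Higman/Jucys theorem (\cite{FHG,Jucys}) used in Lemma~\ref{lm:Rcenter}. For $r=1$ it is exactly that theorem. For $r>1$ one must also handle colour data, carried by the $Q_i$; these are available as interpolation polynomials in the $X_i$ (the operator $X_i$ having the $r+1$ distinct eigenvalues $0,1,\xi,\dots,\xi^{r-1}$), and the point that genuinely needs checking is that the colour and content data combine into an element of $Z$ --- i.e.\ into a combination of the symmetric polynomials in the $X_i$ and in the $Y_i$ --- after being pushed into $\CC[C_r\wr R_n]$ and multiplied by the idempotents $E_A$. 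That reconciliation, and not the (routine) inverse-semigroup bookkeeping, is where the work lies; everything else is a line-by-line transcription of the proof of Lemma~\ref{lm:Rcenter}.
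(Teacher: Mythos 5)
Your opening paragraph is essentially the first (easy) half of the paper's own argument: by Theorem~\ref{thm:JM} the symmetric polynomials in the $X_i$ and in the $Y_i$ act by scalars on every simple module, hence are central. But the paper finishes the reverse inclusion in one further sentence: the scalars by which these elements act separate non-isomorphic simple modules (cf.\ Corollary~\ref{coro:irre}), and since $\CC[C_r\wr R_n]$ is semisimple over $\CC$ its center is a direct product of copies of $\CC$ indexed by the simples, so a unital subalgebra of the center whose elements separate the simples is the whole center. You instead set out to transcribe the characteristic-free proof of Lemma~\ref{lm:Rcenter} to the wreath setting, and the decisive step of that route --- the $C_r\wr S_n$ analogue of the Farahat--Higman/Jucys theorem, together with the merging of colour and content data into your algebra $Z$ --- is precisely what you leave unproven, flagging it yourself as ``the main obstacle'' that ``genuinely needs checking''. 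As written the proposal is therefore not a proof: its second half is a plan whose hardest step is missing, whereas the intended argument needs no class sums $c_\rho$, no basis of the center, and no wreath Farahat--Higman statement.

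Moreover, the deferred step is not mere bookkeeping, and your own computation shows why. You correctly observe that a symmetric polynomial in the $X_i$ acts on $V^n_{\lambda^{(r)}}$ only through the sizes $|\lambda_{(k)}|$, and a symmetric polynomial in the $Y_i$ only through the combined content multiset of the $\lambda_{(k)}$. For $r\geq 2$ these data do not determine $\lambda^{(r)}$: already for $r=2$ and $n=4$ the bipartitions $((2),(1,1))$ and $((1,1),(2))$ share both, so every product $f(X_1,\ldots,X_n)\,g(Y_1,\ldots,Y_n)$ with $f,g$ symmetric acts by the same scalar on the two corresponding simples, and the algebra generated by such elements cannot contain the primitive central idempotents separating them. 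Hence the reconciliation you postpone cannot be achieved with separately symmetric polynomials at all; one must work with polynomials invariant under the simultaneous permutation of the pairs $(X_i,Y_i)$, such as $\sum_i X_i^aY_i^b$ --- which is also how the statement and the paper's separation claim are to be understood, since the multiset of pairs $(\sgn_L(i),\ct(L(i)))$ does determine $\lambda^{(r)}$ (the contents attached to $\xi^{k-1}$ recover $\lambda_{(k)}$). With that reading, the paper's semisimplicity-plus-separation argument closes the proof immediately, in place of the wreath Farahat--Higman machinery your route would still have to develop.
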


\begin{proof}
Indeed, from Theorem~\ref{thm:JM}, it is easy to see that 
all symmetric polynomials in the $X_i$ and all symmetric polynomials in
the $Y_i$ act as scalars on all simple modules and hence are central. 
At the same time, they separate the isomorphism classes of simple 
modules and hence generate the center.
\end{proof}

Over an arbitrary field, we only have the following:

\begin{proposition}\label{prop:gencenter}
The symmetric polynomials in $\{X_i\mid i\in[n]\}$ and the symmetric 
polynomials in $\{Y_i\mid i\in[n]\}$ belong to the center of $\FF[C_r\wr R_n]$.
\end{proposition}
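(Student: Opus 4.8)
The plan is to reduce the statement over $\FF$ to the already available statement over $\CC$ by an elementary base-change argument. Since we are in the situation where the Jucys--Murphy elements $Y_i$ of $\FF[C_r\wr R_n]$ are defined, $r$ is invertible in $\FF$; set $\Lambda:=\ZZ[1/r]$ and form the monoid ring $\Lambda[C_r\wr R_n]$, noting that the monoid $C_r\wr R_n$ itself does not depend on the ground field. Inspecting the definitions in Section~\ref{sec:JM}, each $X_j$ is a difference of two elements of $C_r\wr R_n$, while each $Y_j$ equals $r^{-1}$ times a $\ZZ$-linear combination of elements of $C_r\wr R_n$: here one uses that $E_A=\sum_{B\subseteq A}(-1)^{|B|}e_B$ is a $\ZZ$-combination of the monoid idempotents $e_B$, and that each $\xi_m^{l}\xi_j^{-l}(m,j)$ is a single element of $C_r\wr R_n$ rather than a scalar combination of such. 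Consequently all $X_j$, all $Y_j$, and hence all elementary symmetric polynomials $e_k(X_1,\dots,X_n)$ and $e_k(Y_1,\dots,Y_n)$, lie in $\Lambda[C_r\wr R_n]$, and they are taken to the corresponding $\FF[C_r\wr R_n]$- and $\CC[C_r\wr R_n]$-elements by the same formulas.

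Next I would record centrality over $\CC$. By Theorem~\ref{thm:JM} the elements $X_i$ and $Y_i$ act as scalars on every Gelfand--Zeitlin basis vector of every simple $\CC[C_r\wr R_n]$-module, so every symmetric polynomial in the $X_i$, and every symmetric polynomial in the $Y_i$, acts as a scalar on each simple module; as $\CC[C_r\wr R_n]$ is semisimple, such an element is central, which is exactly the content of the preceding proposition. In particular $e_k(X_1,\dots,X_n)$ and $e_k(Y_1,\dots,Y_n)$ commute, in $\CC[C_r\wr R_n]$, with every $\tau\in C_r\wr R_n$. Since $\Lambda\subseteq\CC$ and $C_r\wr R_n$ is a basis of the monoid ring over either coefficient ring, the natural map $\Lambda[C_r\wr R_n]\hookrightarrow\CC[C_r\wr R_n]$ is injective, so these commutation identities already hold in $\Lambda[C_r\wr R_n]$.

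Finally I would specialize. As $r$ is invertible in $\FF$ there is a (unique) ring homomorphism $\Lambda=\ZZ[1/r]\to\FF$, and extending it coefficientwise gives a ring homomorphism $\Lambda[C_r\wr R_n]\to\FF[C_r\wr R_n]$ which is the identity on $C_r\wr R_n$ and therefore sends the $\Lambda$-versions of $X_i$, $Y_i$ and of $e_k(X_1,\dots,X_n)$, $e_k(Y_1,\dots,Y_n)$ to their $\FF$-versions. Applying it to the identities of the previous paragraph yields $e_k(X_1,\dots,X_n)\,\tau=\tau\,e_k(X_1,\dots,X_n)$ and $e_k(Y_1,\dots,Y_n)\,\tau=\tau\,e_k(Y_1,\dots,Y_n)$ in $\FF[C_r\wr R_n]$ for all $k$ and all $\tau\in C_r\wr R_n$; since $C_r\wr R_n$ spans $\FF[C_r\wr R_n]$, all these elementary symmetric polynomials are central, and since any symmetric polynomial is a polynomial in the elementary symmetric ones, the claim follows. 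I do not expect a genuine obstacle here; the only point needing a little care is the observation that the $Y_i$ are genuinely defined over $\ZZ[1/r]$ — i.e.\ that the roots of unity $\xi^{l}$ occur only inside elements of the monoid and never as stray scalars — which is what makes the single specialization $\ZZ[1/r]\to\FF$ sufficient and sidesteps any reduction-mod-$p$ issue arising from the factor $1/r$.
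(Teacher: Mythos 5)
Your argument is correct, but it is not the route the paper takes. The paper proves the statement directly over $\FF$: it observes that $P$ and $Q$ commute with every $X_i$ and $Y_i$, that conjugation by the $s_j$ permutes the $X_i$, and that $s_iY_is_i=Y_{i+1}-E_{\{i,i+1\}}\sum_{l=0}^{r-1}\xi_{i}^{l}\xi_{i+1}^{-l}s_i$ while $s_jY_is_j=Y_i$ for $j\notin\{i,i+1\}$; from these relations the elementary symmetric polynomials are seen to commute with the generators of $\FF[C_r\wr R_n]$ listed at the end of Section~\ref{s2}, exactly as in the classical Jucys--Murphy argument for $S_n$. You instead transfer the characteristic-zero statement (Theorem~\ref{thm:JM} together with semisimplicity of $\CC[C_r\wr R_n]$, i.e.\ the proposition preceding this one) through the integral form $\ZZ[1/r][C_r\wr R_n]$: your key observations --- that $X_j$ and $Y_j$ have coefficients in $\ZZ[1/r]$ because $\xi$ occurs only inside monoid elements and never as a scalar, that the monoid algebra over $\ZZ[1/r]$ embeds into the one over $\CC$, and that a homomorphism $\ZZ[1/r]\to\FF$ exists precisely because $r$ is invertible in $\FF$ --- are all correct, and specializing the commutation identities does yield centrality over $\FF$. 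The trade-off: the paper's computation is self-contained and never invokes the complex representation theory, whereas your argument needs no new computation over $\FF$ and makes transparent why the hypothesis on $r$ is exactly what is required. One small point worth making explicit: the reduction from elementary symmetric polynomials to arbitrary symmetric polynomials uses that the $Y_i$ pairwise commute over $\FF$, which is Proposition~\ref{prop:JMcommute} (whose proof is characteristic-free) or, alternatively, follows by the same specialization; the paper's proof makes the same silent reduction.
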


\begin{proof}
To prove the result, it is enough to show that the elementary symmetric polynomials 
in $\{X_i\mid i\in[n]\}$ as well as the elementary symmetric polynomials in $\{Y_i\mid i\in[n]\}$ 
belong to the center of $\FF[C_r\wr R_n]$. 
Note that, both $P$ and $Q$ commute with each $X_i$ and $Y_i$. Furthermore, for $j\in[n-1]$, we have
\begin{itemize}
\item $ s_{i}X_is_i=X_{i+1}, \text{ and } s_jX_is_j=X_i \text{ for } j\notin \{i,i+1\},$
\item 	$ s_{i}Y_is_i=Y_{i+1}-E_{\{i,i+1\}}\displaystyle{\sum_{l=0}^{r-1}}\xi_{i}^{l}\xi_{i+1}^{-l}s_i, 
\text{ and } s_jY_is_j=Y_i \text{ for } j\notin \{i,i+1\}.$
\end{itemize}
From this it follows that the symmetric polynomials in question commute with the generators 
of $\FF[C_r\wr R_n]$ (see the end of Section~\ref{s2}) and hence  are central. 
\end{proof}

Now in order to avoid excessive notation, the remaining part of this subsection is for $r=1$ and for the arbitrary $r$ we have stated the corresponding results in Subsection \ref{sec:gen}.

Like in the case of symmetric groups, symmetric polynomials in $X_i$ and the symmetric polynomials in $Y_i$ being central (see Lemma~\ref{lm:Rcenter}) give the 
following statement.

\begin{lemma}\label{lm:Rblock}
Let $\mathcal{T}_{\leq n}:=\displaystyle{\bigcup^{n}_{i=0}} \mathcal{T}_{i}$. 
Then, any $M\in \FF[R_n]\Mod$ admits a decomposition 
$M\cong \displaystyle{\bigoplus_{\gamma\in\mathcal{T}_{\leq n}}}M[\gamma]$, 
as $\FF[R_n]$-modules.
\end{lemma}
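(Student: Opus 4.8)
The goal is to decompose an arbitrary $M \in \FF[R_n]\Mod$ as $\bigoplus_{\gamma\in\mathcal{T}_{\leq n}} M[\gamma]$, where $M[\gamma]$ collects those generalized joint eigenspaces $M_{\underline i,\underline j}$ with $\ty(\underline i,\underline j)=\gamma$. The structure of the argument mirrors the proof of Lemma~\ref{lm:Sblock}, and the main input we already have in hand is Lemma~\ref{lm:Rcenter}, which tells us that the center $Z(\FF[R_n])$ contains the elementary symmetric polynomials in $\{X_k\}$ and in $\{Y_k\}$. First I would recall, from the corollary following Lemma~\ref{lem6.6}, the refined decomposition $M \cong \bigoplus_{\underline i\in\FF_p^n,\,\underline j\in\{0,1\}^n} M_{\underline i,\underline j}$ into simultaneous generalized eigenspaces for all $X_k$ and $Y_k$ — this is legitimate because all these operators commute (the $X_k$ are diagonal matrices, and commutativity with the $Y_k$ follows from Proposition~\ref{prop:JMcommute} specialized to $r=1$). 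So the point is purely that the coarser grouping by weight $\gamma$ is $\FF[R_n]$-stable.

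\textbf{Key steps.} The essential step is to show that each summand $M[\gamma]$ is an $\FF[R_n]$-submodule of $M$, i.e. that it is the image of a central idempotent. For this I would argue as follows. The elementary symmetric polynomials $e_1(X),\dots,e_n(X)$ and $e_1(Y),\dots,e_n(Y)$ are central by Lemma~\ref{lm:Rcenter}; hence any polynomial in them is central, and in particular the subalgebra $Z_0 \subseteq Z(\FF[R_n])$ they generate acts on $M$ by commuting operators that preserve every $\FF[R_n]$-submodule. Now $M[\gamma]$ is, by definition, a sum of the generalized eigenspaces $M_{\underline i,\underline j}$ over all $(\underline i,\underline j)$ with the same multiset of pairs $\{(i_k,j_k) : j_k = 1\}$ having weight-type $\gamma$; I would observe that the relevant data distinguishing these summands is detected precisely by the \emph{unordered} collection of eigenvalues of the $X_k$ and $Y_k$, which is exactly what the symmetric (equivalently, elementary symmetric) polynomials see. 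Concretely, on $M_{\underline i,\underline j}$ the operator $e_m(Y_1,\dots,Y_n)$ acts with the single generalized eigenvalue $e_m(i_1,\dots,i_n)$ and $e_m(X_1,\dots,X_n)$ with generalized eigenvalue $e_m(j_1,\dots,j_n) = e_m$ of a $0/1$ vector; since $X_k = E_{\{k\}}$ is idempotent with eigenvalue $j_k\in\{0,1\}$, the tuple $(e_1(X),\dots)$ records exactly $\sum_k j_k = \mathrm{rank}$-type information, and together with the symmetric functions of the $Y$-eigenvalues restricted to the support $\{k: j_k=1\}$ this recovers $\gamma = \ty(\underline i,\underline j)$. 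Therefore $M[\gamma]$ is exactly the generalized eigenspace of the commuting family $Z_0$ for a fixed system of eigenvalues, hence is cut out by a central idempotent $z_\gamma \in \FF[R_n]$, and $M = \bigoplus_\gamma z_\gamma M = \bigoplus_\gamma M[\gamma]$ as $\FF[R_n]$-modules. Finiteness of the index set is clear since $\gamma$ ranges over $\mathcal{T}_{\leq n}$.

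\textbf{Main obstacle.} The one point that needs genuine care, rather than bookkeeping, is the claim that the symmetric functions of the $X_k$ together with the symmetric functions of the $Y_k$ are \emph{sufficient} to separate the weights $\gamma$ — i.e. that two generalized eigenspaces $M_{\underline i,\underline j}$ and $M_{\underline i',\underline j'}$ with different weights necessarily lie in different $Z_0$-eigenspaces. The subtlety is that the $Y_k$ do not act semisimply in general over $\FF$, and that $e_m(Y)$ sees a \emph{single} symmetric function of the full tuple $(i_1,\dots,i_n)$ rather than the $Y$-eigenvalues restricted to the support of $\underline j$; one must check that the joint data of all $e_m(X)$ and all $e_m(Y)$, reduced mod $p$, still pins down the multiset $\{i_k : j_k=1\}$ up to the equivalence defining $\gamma$. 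I expect this to go through by the same elementary argument used in the symmetric-group case (Lemma~\ref{lm:Sblock} and \cite[Lemma~2.4]{KB}): since $X_k$ is the idempotent $E_{\{k\}}$, the $\FF[R_n]$-module $M$ first splits off a summand on which a prescribed sub-multiset of the $X_k$ act as $1$ and the rest as $0$ (this is exactly the rank decomposition, compatible with Lemma~\ref{lm:Requi}), and on each such summand one is reduced to the symmetric-group statement already established for the $Y$-part. Spelling this reduction out carefully — invoking Lemma~\ref{lm:Requi} to pass to $\bigoplus_i \FF[S_i]\Mod$ and then Lemma~\ref{lm:Sblock} — is, I expect, the cleanest route, and the only step requiring more than a one-line justification.
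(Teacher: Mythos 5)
Your proposal is correct and is essentially the argument the paper intends: the paper gives no detailed proof of this lemma, only the remark that the symmetric polynomials in the $X_k$ and in the $Y_k$ are central (Lemma~\ref{lm:Rcenter}), which is your main line; and your fallback reduction through Lemma~\ref{lm:Requi} and Lemma~\ref{lm:Sblock} is exactly what Lemma~\ref{lm:RSblock} makes explicit immediately afterwards. Two remarks. The obstacle you flag closes in one line: since $E_{\{i,k\}}=X_iX_k$, one has $Y_k=\bigl(\sum_{i<k}(i,k)X_i\bigr)X_k$, and $X_k$ is an idempotent, so $Y_k$ kills the $0$-eigenspace of $X_k$; hence $M_{\underline{i},\underline{j}}\neq 0$ forces $i_k=0$ whenever $j_k=0$. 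The central character then determines $\gamma$ exactly: the elementary symmetric values of the $X_k$ and of the $Y_k$ on $M_{\underline{i},\underline{j}}$ are the coefficients of the monic polynomials $\prod_k(x-j_k)=x^{n-t}(x-1)^t$ and $\prod_k(x-i_k)$ in $\FF_p[x]$, whose unique factorizations recover $t$ and the multiplicities of the $i_k$ exactly (no mod-$p$ ambiguity, since multiplicities are at most $n$), and these data give $\gamma$. Finally, one caution on phrasing: the subspace of $M$ on which a prescribed set of the $X_k$ act by $1$ and the rest by $0$ is only a vector-space summand, not an $\FF[R_n]$-submodule (the individual $X_k$ are not central), so ``splits off a summand'' should be read in that weaker sense; the module-level statement is precisely the passage through the equivalence of Lemma~\ref{lm:Requi}, transporting the decomposition of Lemma~\ref{lm:Sblock} along the exact functors $\mathcal{L}^n_j$, as in Lemma~\ref{lm:RSblock}.
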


We want to use Lemma \ref{lm:Rblock} to define, for $i\in \FF_p$,
the following functors:
\begin{align}\label{al:E}
&\res_i:\FF[R_n]\Mod\to \FF[R_{n-1}]\Mod,\quad \mathbb{A}:
\FF[R_n]\Mod\to \FF[R_{n-1}]\Mod, \\
&\ind_i:\FF[R_n]\Mod\to \FF[R_{n+1}]\Mod,\quad 
\mathbb{B}:\FF[R_n]\Mod \to \FF[R_{n+1}]\Mod.\nonumber
\end{align}
For $\gamma\in\displaystyle{\bigcup_{i=0}^n} \mathcal{T}_i$ and 
$M=M[\gamma]\in \FF[R_n]\Mod$, let
\begin{align}\label{al:A}
&{\res}_i(M[\gamma]):=\begin{cases}
(\Res^{\FF[R_n]}_{\FF[R_{n-1}]}M[\gamma])[\gamma-i], & \text{ if } \gamma_i\neq 0; \\
0, &\text{ otherwise};
\end{cases}\\ \nonumber
&\mathbb{A}(M[\gamma]):=\begin{cases}
(\Res^{\FF[R_n]}_{\FF[R_{n-1}]}(M[\gamma]))[\gamma],
& \text{ if } \gamma\in \mathcal{T}_{\leq n-1};\\
0, &\text{ otherwise};
\end{cases}\\ \nonumber
&
\ind_i(M[\gamma]):=
(\Ind^{\FF[R_{n+1}]}_{\FF[R_n]}M[\gamma])[\gamma+i]; 
\quad
\mathbb{B}(M[\gamma]):=(\Ind^{\FF[R_{n+1}]}_{\FF[R_n]}(M[\gamma]))[\gamma].
\end{align}
These definitions extend to any object in $\FF[R_n]\Mod$ by additivity 
using Lemma~\ref{lm:Rblock}. The functors $\res_i$ and $\ind_i$  
are called the {\em $i$-restriction} and 
{\em $i$-induction}, respectively. 
Using these definitions, we get the following decompositions 
\begin{displaymath}
\Res^{\FF[R_n]}_{\FF[R_{n-1}]}(V)\cong (\bigoplus_{i\in \FF_p}\res_i) 
\oplus {\mathbb{A}}\quad \text{and} \quad \Ind^{\FF[R_{n+1}]}_{\FF[R_{n}]}(V)
\cong (\bigoplus_{i\in \FF_p}\ind_i)\oplus{\mathbb{B}}. 
\end{displaymath}

Since $\res_{i},\mathbb{A},\ind_i$ and $\mathbb{B}$ are all 
exact functors, we get the induced $\CC$-linear maps 
$[\res_{i}],[\mathbb{A}],[\ind_i]$, and $[\mathbb{B}]$ on the  complexified
of Grothendieck groups.  
From Lemma \ref{lm:Requi}, for $r=1$, it follows that
$$\boldsymbol{\mathcal{Q}}_n:=\{[\mathcal{L}_{j}^{n}(D^{\lambda})]\mid 
\lambda\in\bigcup_{j=0}^{n}\Lambda^{p}(j)\}$$
is a basis of  $G_0(\FF[R_n])$. In order to describe 
$[\res_{i}],[\mathbb{A}],[\ind_i]$, and $[\mathbb{B}]$ in 
this basis, we use the following lemma, which says that via the functor in Lemma \ref{lm:Requi} the decomposition in Lemma \ref{lm:Sblock} goes to the decomposition in
Lemma \ref{lm:Rblock}.

\begin{lemma}\label{lm:RSblock}
For $0\leq j\leq n$, $\gamma\in\mathcal{T}_j$ and    
$V=V[\gamma]\in \FF[S_j]\Mod$, we have
\begin{align*}
\mathcal{L}_j^n(V[\gamma])=\mathcal{L}_j^n(V[\gamma])[\gamma].
\end{align*}
\end{lemma}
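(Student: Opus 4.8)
The plan is to work through the identification
$$\mathcal{L}_j^n(V)=\bigoplus_{Z\in\mathcal{S}_j}h_Z^n\otimes V$$
afforded by the freeness of $\FF\LL_j^n$ as a right $\FF[S_j]$-module on $\{h_Z^n\mid Z\in\mathcal{S}_j\}$, and to compute how the Jucys--Murphy elements $X_k=E_{\{k\}}$ and $Y_k=\sum_{i=1}^{k-1}E_{\{i,k\}}(i,k)$ of $\FF[R_n]$ act on the summand $h_Z^n\otimes V$. Using $E_A=\prod_{a\in A}(e_\varnothing-e_{\{a\}})$ together with the rule that $\tau\cdot\sigma=0$ in $\FF\LL_j^n$ whenever $\tau\sigma\notin\LL_j^n$, a short case analysis on whether $i$ and $k$ lie in $Z$ shows: $X_k$ acts on $h_Z^n\otimes V$ as the identity if $k\in Z$ and as zero otherwise; and each $E_{\{i,k\}}(i,k)$ sends $h_Z^n$ to $h_Z^n\,(a,b)$ when $i,k\in Z$ occupy the columns $a<b$ of $h_Z^n$, and to zero as soon as one of $i,k$ lies outside $Z$. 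Moving the transposition $(a,b)\in S_j$ across the tensor product, it follows that for $Z=\{z_1<\dots<z_j\}$ the element $Y_{z_b}$ acts on $h_Z^n\otimes V$ as $\mathrm{Id}\otimes\widetilde{Y}_b$, while $Y_k$ acts as zero for $k\notin Z$; here $\widetilde{Y}_1,\dots,\widetilde{Y}_j$ are the Jucys--Murphy elements of $\FF[S_j]$.

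With these formulas the lemma follows quickly. Decompose $V=V[\gamma]=\bigoplus_{\ty(\underline r)=\gamma}V_{\underline r}$ into common generalized eigenspaces of $\widetilde{Y}_1,\dots,\widetilde{Y}_j$. Fix $Z=\{z_1<\dots<z_j\}\in\mathcal{S}_j$ and $\underline r=(r_1,\dots,r_j)$ with $\ty(\underline r)=\gamma$; then $h_Z^n\otimes V_{\underline r}\subseteq M_{\underline i,\underline j}$, where $j_k=1$ exactly for $k\in Z$, $i_{z_b}=r_b$ for $b\in[j]$, and $i_k=0$ for $k\notin Z$. Since $b\mapsto z_b$ is a bijection $[j]\to Z$, for every $t\in\FF_p$ we get $\ty(\underline i,\underline j)_t=|\{b\in[j]\mid r_b=t\}|=\ty(\underline r)_t=\gamma_t$, so $h_Z^n\otimes V_{\underline r}\subseteq\mathcal{L}_j^n(V[\gamma])[\gamma]$. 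Summing over all $Z\in\mathcal{S}_j$ and all $\underline r$ of weight $\gamma$ spans $\mathcal{L}_j^n(V[\gamma])$, giving $\mathcal{L}_j^n(V[\gamma])\subseteq\mathcal{L}_j^n(V[\gamma])[\gamma]$; the reverse inclusion is automatic.

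The only real work is the case analysis in the first paragraph, and within it the cancellation among the four $e_B$-summands of $E_{\{i,k\}}$ that makes $E_{\{i,k\}}(i,k)$ annihilate $h_Z^n$ precisely when exactly one of $i,k$ belongs to $Z$: this is what forces $Y_k$ to vanish off the columns indexed by $Z$ and is the crux of the compatibility between the two block decompositions; the rest is bookkeeping. Alternatively one can avoid handling the $Y_k$ individually by invoking the identity $X_kY_k=Y_k$ in $\FF[R_n]$, which makes $Y_k$ act by zero on the zero-eigenspace of $X_k$, and then only tracking the central elements $\sum_{|A|=r}E_A$ (which act on $\mathcal{L}_j^n(V)$ by the scalar $\binom{j}{r}$) and the central elementary symmetric polynomials in the $Y_k$ (which act on $h_Z^n\otimes V$ as $\mathrm{Id}$ tensored with the corresponding elementary symmetric polynomial in $\widetilde{Y}_1,\dots,\widetilde{Y}_j$), comparing these with the central characters that cut out the weight spaces.
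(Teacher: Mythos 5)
Your proof is correct and follows essentially the same route as the paper's: identify $\mathcal{L}_j^n(V)$ with $\bigoplus_{Z}h_Z^n\otimes V$, show $X_k$ acts as $1$ or $0$ according to $k\in Z$, show $Y_{z_b}$ acts as $\mathrm{Id}\otimes\widetilde{Y}_b$ (and $Y_k=0$ for $k\notin Z$) via the vanishing $E_{\{i,k\}}h_Z^n=0$ unless $i,k\in Z$, and then match weights of generalized eigenspaces. Only a cosmetic remark: in your closing paragraph the phrase ``annihilate $h_Z^n$ precisely when exactly one of $i,k$ belongs to $Z$'' should read ``whenever at least one of $i,k$ lies outside $Z$'', as your first paragraph correctly states.
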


\begin{proof}
Clearly, $\mathcal{L}_j^n(V[\gamma])[\gamma]\subseteq \mathcal{L}_j^n(V[\gamma])$ and we prove the reverse inclusion. By definition, 
$\mathcal{L}_j^n(V[\gamma])=\FF\mathbb{L}_{j}^{n}\otimes_{\FF[S_j]}V[\gamma]$. For $\underline{r}=(r_1,r_2,\ldots,r_j)$ with $\ty(\underline{r})=\gamma$, 
let $v\in V_{\underline{r}}$. Let $Z=\{\beta_1< \cdots<\beta_j\}\subseteq [n]$.  Then using the decomposition \eqref{al:symdecom},  $h^{n}_Z\otimes v\in \mathcal{L}_j^n(V[\gamma])$ and a general element is a linear combination of these elements.  In the following, we show that $h^{n}_Z\otimes v\in \mathcal{L}^{n}_j(V[\gamma])[\gamma]$. 

Define $\underline{l}=(l_1,\ldots,l_n)$, where 
$l_{\beta_1}=l_{\beta_2}= \cdots =l_{\beta_j}=1$ 
and the remaining coordinates are equal to $0$. Also, let 
$\underline{m}=(m_1,\ldots,m_n)$, where $m_{\beta_1}=r_1,\ldots,m_{\beta_j}=r_j$ 
and the remaining coordinates are equal to $0$.  
Then $\ty((\underline{l},\underline{m}))=\gamma$.

 Let $k\in[n]$. Then
\begin{align*}
X_k  (h_{Z}^n\otimes v)=\begin{cases}
h_{Z}^n\otimes v ,&\text{ if } k\in Z;\\
0, & \text{ otherwise}. 
\end{cases}
\end{align*}
In particular, $(X_k-l_k)h_{Z}^n\otimes v=0$.   For $q_1,q_2\in [n]$, observe that
 \begin{align}\label{al:Eblock}
 	E_{\{q_1,q_2\}}h_Z^n=\begin{cases}
 		0, & \text{ if } q_1\notin Z \text{ or } q_2\notin Z;\\
 		h_Z^n, & \text{ otherwise}.
 			\end{cases}
 	\end{align}
 If $k\in Z$, then there exists $s\in[j]$ such that $\beta_s=k$. For $i\in[s-1]$, we observe that $(\beta_i,\beta_s)h^{n}_Z=h^{n}_Z(i,s)$. Now this together with \eqref{al:Eblock} imply that $Y_{k}h_{Z}^n=	h_{Z}^n\widetilde{Y}_s$.  If $k\notin Z$, then again from \eqref{al:Eblock}, we get $Y_kh_{Z}^n=0$.
 
 Since $v\in V_{\underline{r}}$, 
 by the definition of $V_{\underline{r}}$, we have $(\widetilde{Y}_q-r_q)^Nv=0$, for all $q\in[j]$ and  
 for $N\ge 0$. From the above, we get $(Y_{k}-m_k)^{N}  (h_{Z}^n\otimes v) =0$, for all $k\in[n]$.  Thus $h_{Z}^n\otimes v\in\mathcal{L}^{n}_i(V[\gamma])_{(\underline{l},\underline{m})}\subseteq \mathcal{L}^{n}_i(V[\gamma])[\gamma]$.
\end{proof}

The following corollary is  a consequence of  Theorem \ref{thm:res} and Lemma \ref{lm:RSblock}.
\begin{corollary}\label{coro:rel}
For $i\in\FF_p$ and $0\leq j\leq n$, we have
\begin{align*}
&\res_i\circ\mathcal{L}_j^n\cong \begin{cases}
\mathcal{L}_{j-1}^{n-1}\circ \widetilde{\res}_i, & \text{ if } j\neq0;\\
0, & \text{ otherwise};
\end{cases}
\quad\quad\quad \mathbb{A}\circ \mathcal{L}_j^n\cong\begin{cases}
\mathcal{L}_j^{n-1}, & \text{ if } j\neq n;\\
0, & \text{ otherwise};
\end{cases}\\
&\ind_i\circ\mathcal{L}_j^n \cong \mathcal{L}_{j+1}^{n+1}\circ \widetilde{\ind}_i \qquad \text{ and }
\qquad\mathbb{B}\circ \mathcal{L}_j^n\cong \mathcal{L}_{j}^{n+1}.
\end{align*}
\end{corollary}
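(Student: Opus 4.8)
The plan is to reduce the statement to a componentwise application of Theorem~\ref{thm:res} together with careful bookkeeping of the block decompositions. First I would fix $V\in\FF[S_j]\Mod$ and, using Lemma~\ref{lm:Sblock}, write $V=\bigoplus_{\gamma\in\mathcal{T}_j}V[\gamma]$. Applying the exact functor $\mathcal{L}_j^n$ and invoking Lemma~\ref{lm:RSblock}, we obtain $\mathcal{L}_j^n(V)=\bigoplus_{\gamma}\mathcal{L}_j^n(V[\gamma])$ with each $\mathcal{L}_j^n(V[\gamma])$ lying entirely inside the $\gamma$-component of $\mathcal{L}_j^n(V)$. By the uniqueness of the decomposition in Lemma~\ref{lm:Rblock}, this identifies $\mathcal{L}_j^n(V)[\gamma]=\mathcal{L}_j^n(V[\gamma])$, so it is enough to establish each of the four isomorphisms on a module of the form $\mathcal{L}_j^n(V[\gamma])$ and then take the direct sum over $\gamma\in\mathcal{T}_j$.

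For the restriction half, I would feed such a module into Theorem~\ref{thm:res}. Specialising to $r=1$, that theorem yields, on the summand indexed by $j$, a natural isomorphism $\Res^{\FF[R_n]}_{\FF[R_{n-1}]}\circ\mathcal{L}_j^n\cong\mathcal{L}_j^{n-1}\oplus\bigl(\mathcal{L}_{j-1}^{n-1}\circ\Res^{\FF[S_j]}_{\FF[S_{j-1}]}\bigr)$ for $0<j<n$, degenerating to $\mathcal{L}_0^{n-1}$ when $j=0$ and to $\mathcal{L}_{n-1}^{n-1}\circ\Res^{\FF[S_n]}_{\FF[S_{n-1}]}$ when $j=n$. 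Applying this to $\mathcal{L}_j^n(V[\gamma])$ and then extracting the appropriate block via Lemma~\ref{lm:RSblock}: the summand $\mathcal{L}_j^{n-1}(V[\gamma])$ lies in the $\gamma$-component, while $\mathcal{L}_{j-1}^{n-1}(\Res^{\FF[S_j]}_{\FF[S_{j-1}]}V[\gamma])$ splits along $\mathcal{T}_{j-1}$ with its $\delta$-part equal to $\mathcal{L}_{j-1}^{n-1}((\Res^{\FF[S_j]}_{\FF[S_{j-1}]}V[\gamma])[\delta])$. Hence passing to the $(\gamma-i)$-component annihilates $\mathcal{L}_j^{n-1}(V[\gamma])$ (as $\gamma\neq\gamma-i$) and leaves $\mathcal{L}_{j-1}^{n-1}(\widetilde{\res}_i(V[\gamma]))$ when $\gamma_i\neq 0$ and $0$ otherwise, matching the definition of $\widetilde{\res}_i$; and passing to the $\gamma$-component keeps only $\mathcal{L}_j^{n-1}(V[\gamma])$, which gives $\mathbb{A}\circ\mathcal{L}_j^n\cong\mathcal{L}_j^{n-1}$ for $j\neq n$ (and $0$ for $j=n$, where $\gamma\in\mathcal{T}_n$ forces $\gamma\notin\mathcal{T}_{\leq n-1}$).

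For the induction half I would first promote Theorem~\ref{thm:res} to an induction statement. Since $R_{n-1}\hookrightarrow R_n$ is a unital embedding, $\Ind^{\FF[R_n]}_{\FF[R_{n-1}]}$ is left adjoint to $\Res^{\FF[R_n]}_{\FF[R_{n-1}]}$; and because the functors $\bigoplus_j\mathcal{L}_j^n$ are equivalences of categories (Lemma~\ref{lm:Requi}), taking left adjoints throughout the commuting square of Theorem~\ref{thm:res}, written for the inclusion $R_n\subset R_{n+1}$, produces $\Ind^{\FF[R_{n+1}]}_{\FF[R_n]}\circ\mathcal{L}_j^n\cong\mathcal{L}_j^{n+1}\oplus\bigl(\mathcal{L}_{j+1}^{n+1}\circ\Ind^{\FF[S_{j+1}]}_{\FF[S_j]}\bigr)$ for all $0\leq j\leq n$, the point being that the left adjoint of the corresponding functor $\mathcal{F}$, restricted to $\FF[S_j]\Mod$, is $\mathrm{Id}\oplus\Ind^{\FF[S_{j+1}]}_{\FF[S_j]}$. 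Then the same block-projection argument as before, now extracting the $(\gamma+i)$-component, respectively the $\gamma$-component, and using that $\gamma+i\in\mathcal{T}_{j+1}$, yields $\ind_i\circ\mathcal{L}_j^n\cong\mathcal{L}_{j+1}^{n+1}\circ\widetilde{\ind}_i$ and $\mathbb{B}\circ\mathcal{L}_j^n\cong\mathcal{L}_j^{n+1}$.

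The tracking of which summand lands in which block is elementary once the setup is in place; the one step I expect to require genuine care is this promotion to induction, namely verifying that the left adjoint of $\mathcal{F}$ has precisely the claimed block-diagonal-plus-induction shape. Alternatively one can bypass adjunction entirely and prove the induction analogue of Theorem~\ref{thm:res} by an explicit analysis of $\FF[R_{n+1}]\otimes_{\FF[R_n]}\mathcal{L}_j^n(V)$ parallel to the proof of Theorem~\ref{thm:res}, after which the rest of the argument is identical.
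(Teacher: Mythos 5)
Your proposal is correct and follows essentially the same route as the paper, which deduces the corollary from Theorem~\ref{thm:res} together with the block-compatibility statement of Lemma~\ref{lm:RSblock}, exactly as in your restriction-side argument. Your extra step for the induction half—passing to left adjoints across the commuting square of Theorem~\ref{thm:res}, using that $\bigoplus_j\mathcal{L}_j^n$ is an equivalence (Lemma~\ref{lm:Requi})—is precisely the Frobenius-reciprocity device the paper itself invokes for the analogous induction statement (Corollary~\ref{cor:Ind}), so you have merely made explicit a step the paper leaves implicit.
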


As an application of Lemma \ref{lm:Requi} and Corollary \ref{coro:rel}, we obtain the following corollary.
\begin{corollary}\label{coro:commute}
For $i\in \FF_p$, we have
\begin{align*}
&\res_i\circ\mathbb{A}\cong\mathbb{A}\circ\res_i,\,
\ind_i\circ\mathbb{A}\cong\mathbb{A}\circ\ind_i, \, \res_i\circ\mathbb{B}\cong\mathbb{B}\circ\res_i, \\ & \ind_i\circ\mathbb{B}\cong\mathbb{B}\circ\ind_i, \text{ and also } \mathbb{A}\circ\mathbb{B}\cong \mathrm{Id}.
\end{align*}
	\end{corollary}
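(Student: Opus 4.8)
The plan is to derive everything from Corollary~\ref{coro:rel} together with the categorical equivalence of Lemma~\ref{lm:Requi}. The key point is that $\bigoplus_{j=0}^{n}\mathcal{L}_j^n$ is an equivalence, so a natural isomorphism between two functors out of $\FF[R_n]\Mod$ can be checked after precomposing with $\bigoplus_j\mathcal{L}_j^n$; equivalently, it suffices to exhibit compatible isomorphisms after restricting each side to the image of each $\mathcal{L}_j^n$. So for each identity to be proved, I would compute both composites applied to $\mathcal{L}_j^n(V)$ for an arbitrary $V\in\FF[S_j]\Mod$ and $0\le j\le n$, using the four formulas in Corollary~\ref{coro:rel} to push $\mathcal{L}$'s past $\res_i,\ind_i,\mathbb{A},\mathbb{B}$, and then match the resulting expressions in terms of $\widetilde{\res}_i,\widetilde{\ind}_i$ and the $\mathcal{L}$'s.

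Concretely, take $\res_i\circ\mathbb{A}\cong\mathbb{A}\circ\res_i$. On the image of $\mathcal{L}_j^n$: if $0<j<n$, then $\mathbb{A}\circ\mathcal{L}_j^n\cong\mathcal{L}_j^{n-1}$, so $\res_i\circ\mathbb{A}\circ\mathcal{L}_j^n\cong\res_i\circ\mathcal{L}_j^{n-1}\cong\mathcal{L}_{j-1}^{n-2}\circ\widetilde{\res}_i$; on the other hand $\res_i\circ\mathcal{L}_j^n\cong\mathcal{L}_{j-1}^{n-1}\circ\widetilde{\res}_i$ and then $\mathbb{A}\circ\mathcal{L}_{j-1}^{n-1}\cong\mathcal{L}_{j-1}^{n-2}$ (using $j-1<n-1$), giving the same answer. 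The boundary cases $j=0$ (both sides kill the module, since $\res_i$ or $\mathbb{A}$ does) and $j=n$ (here $\mathbb{A}\circ\mathcal{L}_n^n\cong 0$, while on the other side $\res_i\circ\mathcal{L}_n^n\cong\mathcal{L}_{n-1}^{n-1}\circ\widetilde{\res}_i$ and then $\mathbb{A}\circ\mathcal{L}_{n-1}^{n-1}\cong 0$ as well, since $n-1$ is the top index at level $n-1$) must be checked separately but are immediate. The identities $\ind_i\circ\mathbb{A}\cong\mathbb{A}\circ\ind_i$, $\res_i\circ\mathbb{B}\cong\mathbb{B}\circ\res_i$, and $\ind_i\circ\mathbb{B}\cong\mathbb{B}\circ\ind_i$ are handled by the same bookkeeping, tracking which index $j$ is shifted up or down and when a composite lands on the excluded top or bottom index; in each case both sides reduce to $\mathcal{L}_{j\pm 1}^{\bullet}\circ\widetilde{\res}_i$ or $\mathcal{L}_{j\pm1}^{\bullet}\circ\widetilde{\ind}_i$ with matching superscripts. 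For $\mathbb{A}\circ\mathbb{B}\cong\mathrm{Id}$, note $\mathbb{B}\circ\mathcal{L}_j^n\cong\mathcal{L}_j^{n+1}$ lands at index $j\le n<n+1$, so it is not the top index at level $n+1$, hence $\mathbb{A}\circ\mathbb{B}\circ\mathcal{L}_j^n\cong\mathbb{A}\circ\mathcal{L}_j^{n+1}\cong\mathcal{L}_j^n$; since this holds for every $j$, and the $\mathcal{L}_j^n$ collectively give an equivalence, $\mathbb{A}\circ\mathbb{B}\cong\mathrm{Id}$.

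The main thing to be careful about — rather than a genuine obstacle — is the index bookkeeping at the boundaries: the functors $\mathbb{A},\res_i$ are only "defined by a nonzero formula" on a sub-range of indices (Lemma~\ref{lm:Rblock} splits a level-$n$ module over $\mathcal{T}_{\le n}$, and $\mathbb{A}$ strips off the top index $j=n$ while $\res_i$ strips off $j=0$), so one must confirm that when a composite is applied at an extreme value of $j$, both sides genuinely vanish or genuinely agree, and that no off-by-one error creeps in between "level $n$, index $j$" and "level $n-1$, index $j$ versus $j-1$". Since Corollary~\ref{coro:rel} already encodes exactly these boundary vanishings, assembling the argument is routine. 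I would therefore write the proof as: "This follows by composing the isomorphisms of Corollary~\ref{coro:rel} and using that $\bigoplus_{j}\mathcal{L}_j^n$ is an equivalence (Lemma~\ref{lm:Requi}); we illustrate with $\res_i\circ\mathbb{A}\cong\mathbb{A}\circ\res_i$ and leave the analogous verifications of the remaining identities to the reader," spelling out the one sample computation and the $\mathbb{A}\circ\mathbb{B}$ case in full.
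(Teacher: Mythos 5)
Your proposal is correct and matches the paper's intended argument: the paper derives Corollary~\ref{coro:commute} precisely as an application of Lemma~\ref{lm:Requi} and Corollary~\ref{coro:rel}, i.e.\ by checking each identity on the images of the functors $\mathcal{L}_j^n$ and using the equivalence to conclude. Your boundary bookkeeping (the cases $j=0$ and $j=n$, and the observation that $\mathbb{B}$ never lands on the top index, giving $\mathbb{A}\circ\mathbb{B}\cong\mathrm{Id}$) is exactly the verification the paper leaves implicit.
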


Let $\lambda\in\displaystyle{\bigcup_{j=0}^{n}}\Lambda^p(j)$. For $i\in \FF_p$, below in \eqref{eq:E} and  \eqref{eq:F}, the first equality is due to Corollary \ref{coro:rel} and the second equality is due to Theorem \ref{thm:action}
\begin{align}
&[\res_i]([\mathcal{L}_{j}^n(D^\lambda)])=[\mathcal{L}_j^n]([\widetilde{\res}_i][D^\lambda])=
\sum_{\mu\in\Lambda^{p}(n-1)}\alpha_{\lambda\mu}
[\mathcal{L}_{j-1}^{n-1}(D^\mu)],\label{eq:E}\\
&	[\ind_i]([\mathcal{L}_{j}^{n}(D^\lambda)])=[\mathcal{L}_j^n]([\widetilde{\ind}_i][D^\lambda])=
\sum_{\nu\in\Lambda^{p}(n+1)}\beta_{\lambda\nu}
[\mathcal{L}_{j+1}^{n+1}(D^\nu)], \label{eq:F}
\end{align}
where $\alpha_{\lambda\mu}$ and $\beta_{\lambda\nu}$ are as given in Theorem \ref{thm:action}.  Once again from Corollary \ref{coro:rel}, we obtain
\begin{align}
&[\mathbb{A}]([\mathcal{L}_{j}^n(D^\lambda)])=\begin{cases} 
[\mathcal{L}_{j}^{n-1}(D^\lambda)], & \text{ if  } j\neq n;\\
0,  &\text{  otherwise};
\end{cases}\label{eq:A}\\
&[\mathbb{B}]([\mathcal{L}_{j}^{n}(D^\lambda)])=
[\mathcal{L}_{j}^{n+1}(D^\lambda)].\label{eq:B}
\end{align}

Define $$\mathcal{G}_{\CC}:=\displaystyle{\bigoplus_{n\in\NN}}G_0(\FF[R_n]).$$ 
Then we can view  $[\res_{i}],[\mathbb{A}],[\ind_i]$ and $[\mathbb{B}]$ 
as endomorphisms on $\mathcal{G}_{\CC}$.

\begin{theorem}\label{thm:main}
\hspace{1mm}

\begin{enumerate}[$($a$)$]
\item\label{thm:main.1} For $i\in \FF_p$, the endomorphisms $[\res_i]$ and
$[\ind_i]$ on $\mathcal{G}_{\CC}$ satisfy the defining relations 
of the Chevalley generators of $\widehat{\mathfrak{sl}}_{p}(\CC)$.
\item\label{thm:main.2}	For $i\in \FF_p$, we have the relations 
\begin{align*}
&[\res_i][\mathbb{A}]=[\mathbb{A}][\res_i], \,[\res_i]
[\mathbb{B}]=[\mathbb{B}][\res_{i}], \,
[\ind_i][\mathbb{A}]=[\ind_{i}][\mathbb{A}],\,[\ind_i]
[\mathbb{B}]=[\ind_{i}][\mathbb{B}],\\  &\text{ and also }
[\mathbb{A}][\mathbb{B}]=\mathrm{Id}_{\mathcal{G}_{\CC}}.
\end{align*}

\item \label{thm:main.3} The vector space $\mathcal{G}_{\CC}$ is a 
module over $U(\widehat{\mathfrak{sl}}_p(\CC))\otimes_{\CC} \CC[\mathcal{B}]$, moreover, 
\begin{align*}
\mathcal{G}_{\CC}	\cong V(\Lambda_{0})\otimes_{\CC} V_{\NN},\quad \text{ as } 
(U(\widehat{\mathfrak{sl}}_p(\CC))\otimes_{\CC} \CC[\mathcal{B}])\text{-}modules.
\end{align*}\end{enumerate}
\end{theorem}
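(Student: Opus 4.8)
The plan is to deduce all three parts from the corresponding results for symmetric groups (Theorem~\ref{thm:Smod}) via the categorical equivalence of Lemma~\ref{lm:Requi} and the commutation relations of Corollary~\ref{coro:rel} and Corollary~\ref{coro:commute}. First, for part~\ref{thm:main.1}, I would transport the Serre-type relations across the equivalence. Lemma~\ref{lm:Requi} gives a direct sum decomposition $\mathcal{G}_{\CC}\cong\bigoplus_{n\in\NN}\bigoplus_{j=0}^{n}G_0(\FF[S_j])$, with the $[\mathcal{L}_j^n]$ identifying each $G_0(\FF[S_j])$-summand with a copy of $\tilde{\mathcal{G}}_{\CC}$; under this identification, Corollary~\ref{coro:rel} shows that $[\res_i]$ and $[\ind_i]$ act, summand by summand, exactly as $[\widetilde{\res}_i]$ and $[\widetilde{\ind}_i]$ do on $\tilde{\mathcal{G}}_{\CC}$ (with a shift in the $(n,j)$-grading but that shift is uniform and compatible). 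Hence any polynomial identity in the $[\widetilde{\res}_i]$, $[\widetilde{\ind}_i]$ that holds on $\tilde{\mathcal{G}}_{\CC}$ — in particular the defining relations of the Chevalley generators, which hold by Theorem~\ref{thm:Smod}\ref{thm:Smod.1} — automatically holds for $[\res_i]$, $[\ind_i]$ on $\mathcal{G}_{\CC}$.

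For part~\ref{thm:main.2}, I would simply pass to Grothendieck groups in the functor isomorphisms of Corollary~\ref{coro:commute}: since all the functors involved are exact, $F\cong G$ implies $[F]=[G]$, giving $[\res_i][\mathbb{A}]=[\mathbb{A}][\res_i]$, the three analogous commutations with $[\mathbb{B}]$, and $[\mathbb{A}][\mathbb{B}]=\mathrm{Id}_{\mathcal{G}_{\CC}}$ at once. (Alternatively one can verify these directly on the basis $\boldsymbol{\mathcal{Q}}_n$ using the explicit formulas \eqref{eq:E}--\eqref{eq:B}, which is a short bookkeeping check once one notes that $[\mathbb{A}]$ and $[\mathbb{B}]$ only shift the superscript $n$ and leave everything else untouched, whereas $[\res_i],[\ind_i]$ only affect the $S$-module data and the subscript $j$.)

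For part~\ref{thm:main.3}, the relation $[\mathbb{A}][\mathbb{B}]=\mathrm{Id}$ from part~\ref{thm:main.2} is precisely the defining relation $ab=1$ of $\mathcal{B}$, so letting $a$ act by $[\mathbb{A}]$ and $b$ by $[\mathbb{B}]$ makes $\mathcal{G}_{\CC}$ a $\CC[\mathcal{B}]$-module; combined with part~\ref{thm:main.1} and the commutations in part~\ref{thm:main.2}, this gives the $U(\widehat{\mathfrak{sl}}_p(\CC))\otimes_{\CC}\CC[\mathcal{B}]$-module structure. For the isomorphism, I would use the grading: write $\mathcal{G}_{\CC}^{(j)}:=\bigoplus_{n\geq j}[\mathcal{L}_j^n]\bigl(G_0(\FF[S_j])\bigr)$, so $\mathcal{G}_{\CC}=\bigoplus_{j\in\NN}\mathcal{G}_{\CC}^{(j)}$. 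By \eqref{eq:A} and \eqref{eq:B}, $[\mathbb{B}]$ maps the $n$-graded piece of $\mathcal{G}_{\CC}^{(j)}$ isomorphically onto the $(n+1)$-graded piece, and $[\mathbb{A}]$ is its left inverse annihilating the lowest piece $n=j$; this is exactly the $\CC[\mathcal{B}]$-module $V_{\NN}$, with basis vector $i\in\NN$ corresponding to the $n=j+i$ piece. Meanwhile \eqref{eq:E}--\eqref{eq:F} together with Theorem~\ref{thm:Smod}\ref{thm:Smod.2} identify each $\mathcal{G}_{\CC}^{(j)}$, as a $\widehat{\mathfrak{sl}}_p(\CC)$-module, with $V(\Lambda_0)$, and $[\res_i],[\ind_i]$ commute with $[\mathbb{A}],[\mathbb{B}]$ by part~\ref{thm:main.2}, so the whole structure factors as the external tensor product $V(\Lambda_0)\otimes_{\CC}V_{\NN}$.

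The main obstacle is not conceptual but one of careful bookkeeping of the two gradings: $\mathcal{G}_{\CC}$ is bigraded by $(n,j)$, the $\widehat{\mathfrak{sl}}_p$-action moves $j$ (and $n$) while preserving $n-j$, and the $\mathcal{B}$-action moves $n$ while preserving $j$; I must make sure that the shifts appearing in Corollary~\ref{coro:rel} line up so that each fixed-$j$ slice really is a full copy of $V(\Lambda_0)$ (i.e. that no piece of $\tilde{\mathcal{G}}_{\CC}$ is lost or doubled) and that the $\mathbb{B}$-action on that slice is free with the correct ``lowest weight'' at $n=j$. Once the indexing is set up cleanly, every step reduces to the already-established Theorems~\ref{thm:res}, \ref{thm:Smod} and Corollaries~\ref{coro:rel}, \ref{coro:commute}.
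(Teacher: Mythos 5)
Your treatment of parts (\ref{thm:main.1}) and (\ref{thm:main.2}) is correct and coincides with the paper's argument: part (\ref{thm:main.1}) holds because, by \eqref{eq:E} and \eqref{eq:F}, the operators $[\res_i]$ and $[\ind_i]$ act on the classes $[\mathcal{L}_j^n(D^\lambda)]$ through the same coefficients $\alpha_{\lambda\mu}$, $\beta_{\lambda\nu}$ of Theorem~\ref{thm:action} as $[\widetilde{\res}_i]$, $[\widetilde{\ind}_i]$ act on $[D^\lambda]$, so the relations of Theorem~\ref{thm:Smod}\eqref{thm:Smod.1} transfer blockwise; part (\ref{thm:main.2}) is Corollary~\ref{coro:commute} read off in the Grothendieck group.

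In part (\ref{thm:main.3}), however, there is a concrete slip in the decomposition you set up. The subspace $\mathcal{G}_{\CC}^{(j)}=\bigoplus_{n\geq j}[\mathcal{L}_j^n]\bigl(G_0(\FF[S_j])\bigr)$ (fixed $j$, varying $n$) is \emph{not} an $\widehat{\mathfrak{sl}}_p(\CC)$-submodule and is not a copy of $V(\Lambda_0)$: by \eqref{eq:E} and \eqref{eq:F} the operators $[\res_i]$ and $[\ind_i]$ change $j$ to $j-1$ and $j+1$, and as a vector space $\mathcal{G}_{\CC}^{(j)}\cong G_0(\FF[S_j])\otimes_{\CC}V_{\NN}$, i.e.\ under $\CC[\mathcal{B}]$ it is a direct sum of copies of $V_{\NN}$, one for each $[D^\lambda]$ with $\lambda\in\Lambda^{p}(j)$, and under $\widehat{\mathfrak{sl}}_p(\CC)$ it is not stable at all. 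So the verification you single out at the end (``each fixed-$j$ slice really is a full copy of $V(\Lambda_0)$'') would fail; the same mislabelling appears in part (\ref{thm:main.1}) when you call each $G_0(\FF[S_j])$-summand a copy of $\tilde{\mathcal{G}}_{\CC}$, though there it is harmless. The correct bookkeeping — which you in fact state yourself, namely that the $\widehat{\mathfrak{sl}}_p$-action preserves $n-j$ while the $\mathcal{B}$-action preserves $j$ — is that the $\widehat{\mathfrak{sl}}_p(\CC)$-stable slices are those with fixed $d=n-j$, namely $\bigoplus_{j\in\NN}[\mathcal{L}_j^{j+d}]\bigl(G_0(\FF[S_j])\bigr)$; by \eqref{eq:E}, \eqref{eq:F} and Theorem~\ref{thm:Smod}\eqref{thm:Smod.2} each of these is a copy of $V(\Lambda_0)$, while by \eqref{eq:A} and \eqref{eq:B} the operators $[\mathbb{B}]$ and $[\mathbb{A}]$ shift $d\mapsto d\pm 1$, with $[\mathbb{A}]$ annihilating the slice $d=0$, which produces the $V_{\NN}$-factor. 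After swapping the roles of the two gradings in this way, the map you are implicitly constructing, $[\mathcal{L}_j^n(D^\lambda)]\mapsto [D^\lambda]\otimes(n-j)$, is exactly the paper's isomorphism $\Phi$ from \eqref{iso:groth}, and the rest of your argument goes through.
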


\begin{proof}
Claim~\eqref{thm:main.1} follows from Theorem~\ref{thm:action}, 
Theorem~\ref{thm:Smod}\eqref{thm:Smod.1}, and Formulae~\eqref{eq:E} and \eqref{eq:F}. Claim~\eqref{thm:main.2} follows from Corollary \ref{coro:commute}. 

Let us now prove Claim~\eqref{thm:main.3}.
Note that  $U(\widehat{\mathfrak{sl}}_p(\CC))\otimes\CC[\mathcal{B}]$ is generated by  
$e_{i}\otimes 1$, $f_i\otimes 1$,  for all $i\in \FF_p$, and by 
$1\otimes a$, $1\otimes b$. Let $e_{i}\otimes 1 $ and $f_i\otimes 1$ 
act on $\mathcal{G}_{\CC}$ by $[\res_i]$ and $[\ind_i]$, 
respectively, for all $i\in \FF_p$. Likewise, let $1\otimes a$ and 
$1\otimes b$ act on $\mathcal{G}_{\CC}$ by $[\mathbb{A}]$ and 
$[\mathbb{B}]$, respectively. Then using claims~\eqref{thm:main.1}
and \eqref{thm:main.2}, we see  that $\mathcal{G}_{\CC}$ is a module over 
$U(\widehat{\mathfrak{sl}}_{p}(\CC))\otimes_{\CC} \CC[\mathcal{B}]$.

Under the isomorphism in Theorem~\ref{thm:Smod}\eqref{thm:Smod.2}, 
we may consider  
$$\{[D^{\lambda}]\mid \lambda\in {\displaystyle \bigcup_{j\in\NN}}\Lambda^p(j)\}$$
as a basis of $V(\Lambda_{0})$. Define the map,
\begin{align}\label{iso:groth}
&	\Phi:\mathcal{G}_{\CC}\to V{(\Lambda_{0})}\otimes V_{\NN}, \text{ by }\\
&	\Phi([\mathcal{L}^{n}_j(D^{\lambda})])=[D^{\lambda}]\otimes (n-j).\nonumber
\end{align}
It follows from the above discussion and the constructions that this map 
is an isomorphism of $U(\widehat{\mathfrak{sl}}_{p}(\CC))\otimes_{\CC}\CC[\mathcal{B}]$-modules.
\end{proof}

\subsection{Bialgebra structure on \texorpdfstring{$\mathcal{G}_{\CC}$}{}}

\subsubsection{Preliminaries}

It is well known that $\tilde{\mathcal{G}}_{\CC}$ has the natural structure of  
a Hopf algebra, see \cite[Chapter I]{Mac}. In this section, we prove that $\mathcal{G}_{\CC}$ 
has the natural structure of a bialgebra.

For $j,k\in\mathbb{N}$, denote  $\FF[S_{(j,k)}]:=\FF[S_j]\otimes_\Bbbk \FF[S_{k}]$.
Further,  for $a,b\in\mathbb{N}$, denote
$\mathcal{L}_{(j,k)}^{(a,b)}:=(\FF\mathbb{L}_{j}^{a}\otimes_\Bbbk \FF\mathbb{L}_k^{b})
\displaystyle{\otimes _{\FF[S_{(j,k)}]}}{}_-$.

\begin{lemma}\label{lm:Res}
For $n,n_1,n_2\in\NN$ with $n=n_1+n_2$, the following  diagram 
\begin{displaymath}
\xymatrixcolsep{9pc}
\xymatrix{	
\displaystyle{\bigoplus_{i=0}^{n}} \FF[S_i]\Mod\ar[d]^{\mathcal{F}}
\ar[r]<0pt>^{\displaystyle{\bigoplus_{i=0}^{n}} \mathcal{L}^{n}_{i}}
& \FF[R_n]\Mod\ar[d]^{\Res^{\FF[R_n]}_{\FF[R_{(n_1,n_2)}]}({}_-)}\\
\displaystyle{\bigoplus_{j=0}^{n_1}} 	\displaystyle{\bigoplus_{k=0}^{n_2}} \FF[S_{(j,k)}]\Mod
\ar[r]<0pt>^{\displaystyle{{\bigoplus_{j=0}^{n_1}} 	\displaystyle{\bigoplus_{k=0}^{n_2}}}
\mathcal{L}_{(j,k)}^{(n_1,n_2)}}
&\FF[R_{(n_1,n_2)}]\Mod,
}
\end{displaymath}
where  the functor $\mathcal{F}$ is given by
\begin{align*}
\mathcal{F}|_{\FF[S_i]\Mod}=
\bigoplus_{\substack{j\in\{0,\dots,n_1\}\\k\in\{0,\dots,n_2\}\\j+k=i}}
\Res^{\FF[S_i]}_{\FF[S_{(j,k)}]}({}_-).
\end{align*}
commutes up to isomorphism of functors.
\end{lemma}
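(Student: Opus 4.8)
The plan is to imitate the proof of Theorem~\ref{thm:res}, replacing the dichotomy ``$n\in Z$ or $n\notin Z$'' by the finer data of how an index set $Z$ meets the two blocks $[n_1]$ and $\{n_1+1,\dots,n\}$. Since $\mathcal{F}$ restricted to $\FF[S_i]\Mod$ equals $\bigoplus_{j+k=i}\Res^{\FF[S_i]}_{\FF[S_{(j,k)}]}$, and the lower horizontal functor precomposed with this sends $V\in\FF[S_i]\Mod$ to $\bigoplus_{j+k=i}\mathcal{L}_{(j,k)}^{(n_1,n_2)}\bigl(\Res^{\FF[S_i]}_{\FF[S_{(j,k)}]}V\bigr)$, it is enough to produce, for each $0\le i\le n$, an isomorphism of $\FF[R_{(n_1,n_2)}]$-modules, natural in $V\in\FF[S_i]\Mod$,
\[
\Res^{\FF[R_n]}_{\FF[R_{(n_1,n_2)}]}\bigl(\mathcal{L}_i^n(V)\bigr)\;\cong\;\bigoplus_{\substack{j+k=i\\0\le j\le n_1,\ 0\le k\le n_2}}\mathcal{L}_{(j,k)}^{(n_1,n_2)}\bigl(\Res^{\FF[S_i]}_{\FF[S_{(j,k)}]}V\bigr).
\]

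First I would fix notation. Recall that $\FF\LL_i^n$ is free as a right $\FF[S_i]$-module on $\{h_Z^n\mid Z\in\mathcal{S}_i\}$, so $\mathcal{L}_i^n(V)=\bigoplus_{Z\in\mathcal{S}_i}h_Z^n\otimes V$ as a vector space. For $Z\in\mathcal{S}_i$ put $Z_1:=Z\cap[n_1]$, $Z_2:=Z\cap\{n_1+1,\dots,n\}$, $\widehat{Z_2}:=\{z-n_1\mid z\in Z_2\}\subseteq[n_2]$, and $j:=|Z_1|$, $k:=|Z_2|$; the assignment $Z\mapsto(Z_1,\widehat{Z_2})$ is a bijection onto the set of pairs consisting of a $j$-subset of $[n_1]$ and a $k$-subset of $[n_2]$. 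Set $N_{j,k}(V):=\bigoplus_{Z:\,|Z\cap[n_1]|=j}h_Z^n\otimes V$. The key computation is that a block-diagonal rook matrix $(\tau_1,\tau_2)\in R_{(n_1,n_2)}$ moves the nonzero entries of $h_Z^n$ lying in rows $\le n_1$ among such rows and those lying in rows $>n_1$ among such rows; hence $(\tau_1,\tau_2)\,h_Z^n$ is either $0$, precisely when $\tau_1 h_{Z_1}^{n_1}=0$ or $\tau_2 h_{\widehat{Z_2}}^{n_2}=0$, or else equals $h_{Z'}^n\,w$ with $|Z'\cap[n_1]|=j$ and $w=(w_1,w_2)$ lying in the Young subgroup $S_{(j,k)}=S_j\times S_k\subseteq S_i$, where $\tau_1 h_{Z_1}^{n_1}=h_{Z'_1}^{n_1}w_1$ and $\tau_2 h_{\widehat{Z_2}}^{n_2}=h_{\widehat{Z'_2}}^{n_2}w_2$. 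In particular each $N_{j,k}(V)$ is an $\FF[R_{(n_1,n_2)}]$-submodule and $\Res^{\FF[R_n]}_{\FF[R_{(n_1,n_2)}]}(\mathcal{L}_i^n(V))=\bigoplus_{j+k=i}N_{j,k}(V)$.

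It then remains to identify $N_{j,k}(V)$ with $\mathcal{L}_{(j,k)}^{(n_1,n_2)}\bigl(\Res^{\FF[S_i]}_{\FF[S_{(j,k)}]}V\bigr)$. Since $\FF\LL_j^{n_1}\otimes_\Bbbk\FF\LL_k^{n_2}$ is free as a right $\FF[S_{(j,k)}]$-module on $\{h_{Z_1}^{n_1}\otimes h_{\widehat{Z_2}}^{n_2}\}$, the latter module equals $\bigoplus_{Z_1,\widehat{Z_2}}(h_{Z_1}^{n_1}\otimes h_{\widehat{Z_2}}^{n_2})\otimes V$ as a vector space, and I would take the linear bijection $h_Z^n\otimes v\mapsto(h_{Z_1}^{n_1}\otimes h_{\widehat{Z_2}}^{n_2})\otimes v$. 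Its $\FF[R_{(n_1,n_2)}]$-equivariance is read off from the preceding paragraph: in the degenerate case both sides vanish, and otherwise $(\tau_1,\tau_2)\cdot(h_Z^n\otimes v)=h_{Z'}^n w\otimes v=h_{Z'}^n\otimes wv$ maps to $(h_{Z'_1}^{n_1}\otimes h_{\widehat{Z'_2}}^{n_2})\otimes wv$, which coincides with $(\tau_1 h_{Z_1}^{n_1}\otimes\tau_2 h_{\widehat{Z_2}}^{n_2})\otimes v=(h_{Z'_1}^{n_1}w_1\otimes h_{\widehat{Z'_2}}^{n_2}w_2)\otimes v=(h_{Z'_1}^{n_1}\otimes h_{\widehat{Z'_2}}^{n_2})\otimes(w_1,w_2)v$, because the right $\FF[S_i]$-action on $V$ restricts by definition to the $\FF[S_{(j,k)}]$-action on the target. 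Naturality in $V$ being clear, summing over $i$ and over $(j,k)$ with $j+k=i$, $0\le j\le n_1$, $0\le k\le n_2$, yields the asserted commutativity.

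The only step I expect to require genuine work is the bookkeeping claimed in the middle paragraph: that the block structure of $R_{(n_1,n_2)}$ is compatible with the grading of $\{h_Z^n\}$ by $(|Z\cap[n_1]|,|Z\cap\{n_1+1,\dots,n\}|)$, and that the correction permutation $w$ in $(\tau_1,\tau_2)h_Z^n=h_{Z'}^n w$ splits as the pair $(w_1,w_2)$ coming from the two diagonal blocks. This is a routine unwinding of matrix multiplication, completely parallel to the case analysis in the proof of Theorem~\ref{thm:res}, so I would present it briefly.
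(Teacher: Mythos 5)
Your proposal is correct and follows essentially the same route as the paper: the paper's proof also splits each $Z\in\mathcal{S}_i$ into $Z'=Z\cap[n_1]$ and $Z''=Z\cap\{n_1+1,\dots,n\}$ and defines the isomorphism by $h_Z^n\otimes v\mapsto (h_{Z'}^{n_1}\otimes h_{Z''}^{n_2})\otimes v$, functorially in $V$. The only difference is that you spell out the block-compatibility and equivariance bookkeeping (including the shift of $Z_2$ into $[n_2]$), which the paper leaves implicit.
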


\begin{proof}
Recall that, as a right $\FF[C_{r}\wr S_i]$-module, $\FF\mathbb{L}_i^n$ has a basis 
consisting of matrices of the form $h_Z^n$, where $Z\in\mathcal{S}_i$. 

For $Z$ as above, let
$$Z'=Z\cap\{1,2,\ldots,n_1\} \text{ and } Z''=Z\cap\{n_1+1,n_1+2,\ldots,n_1+n_2=n\}.$$ 
Then $Z=Z'\sqcup Z''$. If $|Z'|=j$ and $|Z''|=k$, then we have 
$0\leq j\leq n_1$ and $0\leq k\leq n_2$ such that  $j+k=i$.

Then the map

\resizebox{\textwidth}{!}{
$
\begin{array}{rcl}
\Res^{\FF[R_n]}_{\FF[R_{(n_1,n_2)}]}(\mathcal{L}_i^n(V))=
\Bbbk\mathbb{L}_i^n\otimes_{\FF[S_i]} V & \to&
\displaystyle\bigoplus_{\substack{j\in\{0,\dots,n_1\}\\k\in\{0,\dots,n_2\}\\j+k=i}}
(\Bbbk\mathbb{L}_j^{n_1}\otimes_\Bbbk \Bbbk\mathbb{L}_k^{n_2})\otimes_{\FF[S_{(j,k)}]}
\Res^{\FF[S_i]}_{\FF[S_{(j,k)}]}(V) \\
h_{Z}^{n}\otimes v&\mapsto &(h_{Z'}^{n_1}\otimes h_{Z''}^{n_2})\otimes v
\end{array}
$
}
is an isomorphism of $\FF[R_{(n_1,n_2)}]$-modules which is  functorial in $V$. 
The claim follows.
\end{proof}

The following statement follows from  Lemma~\ref{lm:Res} using Frobenius reciprocity.

\begin{corollary}\label{cor:Ind}
For $n,n_1,n_2\in\NN$ with $n=n_1+n_2$, the following diagram
\begin{displaymath}
\xymatrixcolsep{9pc}
\xymatrix{	
\displaystyle{\bigoplus_{ i=0}^{n}} \FF[S_i]\Mod 
\ar[r]<0pt>^{\displaystyle{\bigoplus_{i=0}^{n}} \mathcal{L}^{n}_{i}}
& \FF[R_n]\Mod\\
\displaystyle{\bigoplus_{j=0}^{n_1}} 	
\displaystyle{\bigoplus_{k=0}^{n_2}} \FF[S_{(j,k)}]\Mod 
\ar[r]<0pt>^{  \displaystyle{\bigoplus_{j=0}^{n_1}} 	
\displaystyle{\bigoplus_{k=0}^{n_2}}   \mathcal{L}_{(j,k)}^{(n_1,n_2)} }\ar[u]_{\mathcal{F}'}
&\FF[R_{(n_1,n_2)}]\Mod\ar[u]_{\Ind^{\FF[R_n]}_{\FF[R_{(n_1,n_2)}]}({}_-)},
}
\end{displaymath}
where  the functor $\mathcal{F}'$ is given by
\begin{align*}
\mathcal{F}'|_{\FF[S_i]\Mod}=
\displaystyle{\bigoplus_{\substack{j\in\{0,\dots,n_1\}\\k\in\{0,\dots,n_2\}\\j+k=i}}}
\Ind^{\FF[S_i]}_{\FF[S_{(j,k)}]}({}_-)
\end{align*}
commutes up to isomorphism of functors.
\end{corollary}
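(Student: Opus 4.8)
The plan is to obtain the induction diagram from the restriction diagram of Lemma~\ref{lm:Res} by passing to left adjoints; the phrase ``Frobenius reciprocity'' here simply records that the two left-hand vertical arrows are adjoint, namely $\mathcal{F}'\dashv\mathcal{F}$ (assembled from $\Ind^{\FF[S_i]}_{\FF[S_{(j,k)}]}\dashv\Res^{\FF[S_i]}_{\FF[S_{(j,k)}]}$ over all $i$ and all $j+k=i$), while on the right $\Ind^{\FF[R_n]}_{\FF[R_{(n_1,n_2)}]}\dashv\Res^{\FF[R_n]}_{\FF[R_{(n_1,n_2)}]}$ because induction is tensoring with the regular bimodule over the unital subalgebra $\FF[R_{(n_1,n_2)}]=\FF[R_{n_1}]\otimes_{\FF}\FF[R_{n_2}]$ of $\FF[R_n]$.

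The key remaining input is that the two horizontal functors are equivalences. For the top one, $\bigoplus_{i=0}^{n}\mathcal{L}_i^{n}$, this is Lemma~\ref{lm:Requi} with $r=1$. For the bottom one, $\bigoplus_{j=0}^{n_1}\bigoplus_{k=0}^{n_2}\mathcal{L}_{(j,k)}^{(n_1,n_2)}$, the same proof applies after replacing $\FF\mathbb{L}_i^n$ by the free right $\FF[S_{(j,k)}]$-modules $\FF\mathbb{L}_j^{n_1}\otimes_{\FF}\FF\mathbb{L}_k^{n_2}$ and using $\bigoplus_{j,k}\End_{\FF[S_{(j,k)}]}(\FF\mathbb{L}_j^{n_1}\otimes_{\FF}\FF\mathbb{L}_k^{n_2})\cong\FF[R_{(n_1,n_2)}]$; alternatively, this functor is the external tensor product of the equivalences $\bigoplus_j\mathcal{L}_j^{n_1}$ and $\bigoplus_k\mathcal{L}_k^{n_2}$ supplied by Lemma~\ref{lm:Requi} for $R_{n_1}$ and $R_{n_2}$.

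Now write $\mathcal{L}^{\mathrm{t}}$, $\mathcal{L}^{\mathrm{b}}$ for the top and bottom horizontal functors and fix quasi-inverses; for an equivalence the quasi-inverse is simultaneously the left and the right adjoint. Lemma~\ref{lm:Res} gives $\Res\circ\mathcal{L}^{\mathrm{t}}\cong\mathcal{L}^{\mathrm{b}}\circ\mathcal{F}$. Taking left adjoints on both sides — using that the left adjoint of a composite is the composite of the left adjoints in the reverse order, together with uniqueness of left adjoints up to natural isomorphism — yields $(\mathcal{L}^{\mathrm{t}})^{-1}\circ\Ind\cong\mathcal{F}'\circ(\mathcal{L}^{\mathrm{b}})^{-1}$, and composing with $\mathcal{L}^{\mathrm{t}}$ on the left and $\mathcal{L}^{\mathrm{b}}$ on the right and cancelling turns this into $\Ind\circ\mathcal{L}^{\mathrm{b}}\cong\mathcal{L}^{\mathrm{t}}\circ\mathcal{F}'$, which is precisely the asserted commutativity of the diagram. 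The only non-formal ingredient is the equivalence statement for the bottom functor, which is a routine reprise of the proof of Lemma~\ref{lm:Requi}; note that one should resist building the isomorphism directly as the ``mate'' of the isomorphism in Lemma~\ref{lm:Res}, since that natural transformation — assembled from the unit of $\Ind\dashv\Res$ — is typically not invertible, and it is exactly the detour through the quasi-inverses of the two equivalences that repairs this.
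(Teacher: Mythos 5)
Your argument is correct and is essentially the paper's own proof: the paper obtains the corollary from Lemma~\ref{lm:Res} ``using Frobenius reciprocity'', i.e.\ precisely by passing to left adjoints of the natural isomorphism $\Res\circ\mathcal{L}^{\mathrm{t}}\cong\mathcal{L}^{\mathrm{b}}\circ\mathcal{F}$ as you do. The additional details you supply --- that both horizontal functors are equivalences (the bottom one being the external tensor product of the equivalences from Lemma~\ref{lm:Requi}) and the uniqueness-of-left-adjoints step --- simply make explicit what the paper leaves implicit.
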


An immediate consequence of Corollary~\ref{cor:Ind} is the following statement.

\begin{corollary}
For $n,n_1,n_2\in\NN$ with $n=n_1+n_2$, the functor 
$\Ind_{\FF[R_{(n_1,n_2)}]}^{\FF[R_n]}({}_-)$ is exact.
\end{corollary}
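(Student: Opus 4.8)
The plan is to read the exactness directly off the commutative square in Corollary~\ref{cor:Ind}: there $\Ind^{\FF[R_n]}_{\FF[R_{(n_1,n_2)}]}({}_-)$ sits as one of four functors, and I will argue that the other three are exact (two of them, in fact, equivalences), whence the fourth is exact up to natural isomorphism.

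First I would verify that the bottom horizontal functor $\bigoplus_{j=0}^{n_1}\bigoplus_{k=0}^{n_2}\mathcal{L}_{(j,k)}^{(n_1,n_2)}$ is an equivalence of categories, and hence exact with exact quasi-inverse. The point is that $R_{(n_1,n_2)}\cong R_{n_1}\times R_{n_2}$, so $\FF[R_{(n_1,n_2)}]\cong\FF[R_{n_1}]\otimes_\FF\FF[R_{n_2}]$, and $\mathcal{L}_{(j,k)}^{(n_1,n_2)}$ is the external tensor product of $\mathcal{L}_j^{n_1}$ and $\mathcal{L}_k^{n_2}$. The claim is then a two-fold application of Lemma~\ref{lm:Requi}: repeating its proof, the right $\FF[S_{(j,k)}]$-module $\FF\mathbb{L}_j^{n_1}\otimes_\FF\FF\mathbb{L}_k^{n_2}$ is free (being a tensor product of free modules), and $\bigoplus_{j,k}\End_{\FF[S_{(j,k)}]}(\FF\mathbb{L}_j^{n_1}\otimes_\FF\FF\mathbb{L}_k^{n_2})\cong\FF[R_{(n_1,n_2)}]$.

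Next I would note that the functor $\mathcal{F}'$ of Corollary~\ref{cor:Ind} is exact, being a finite direct sum of induction functors $\Ind^{\FF[S_i]}_{\FF[S_{(j,k)}]}({}_-)$, each of which is exact since $\FF[S_i]$ is free as a right $\FF[S_j\times S_k]$-module (a transversal of $S_j\times S_k$ in $S_i$ is a basis). The remaining horizontal functor $\bigoplus_{i=0}^n\mathcal{L}_i^n$ is an equivalence by Lemma~\ref{lm:Requi}, hence exact.

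Putting this together, Corollary~\ref{cor:Ind} supplies a natural isomorphism
\[
\Ind^{\FF[R_n]}_{\FF[R_{(n_1,n_2)}]}({}_-)\circ\Big(\bigoplus_{j,k}\mathcal{L}_{(j,k)}^{(n_1,n_2)}\Big)\;\cong\;\Big(\bigoplus_{i=0}^n\mathcal{L}_i^n\Big)\circ\mathcal{F}',
\]
and precomposing with a quasi-inverse of $\bigoplus_{j,k}\mathcal{L}_{(j,k)}^{(n_1,n_2)}$ realizes $\Ind^{\FF[R_n]}_{\FF[R_{(n_1,n_2)}]}({}_-)$, up to natural isomorphism, as a composite of three exact functors; hence it is exact. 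I do not expect a genuine obstacle here: the corollary really is immediate, and the only step that is not purely formal is the equivalence claim for $\bigoplus_{j,k}\mathcal{L}_{(j,k)}^{(n_1,n_2)}$, which is the routine adaptation of Lemma~\ref{lm:Requi} indicated above. As an alternative one could instead prove directly that $\FF[R_n]$ is flat — indeed free — as a right $\FF[R_{(n_1,n_2)}]$-module and invoke $\Ind=\FF[R_n]\otimes_{\FF[R_{(n_1,n_2)}]}{}_-$, but this requires more bookkeeping than the diagram chase.
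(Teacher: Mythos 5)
Your argument is correct and is exactly the paper's intended one: the paper derives this as an immediate consequence of Corollary~\ref{cor:Ind}, i.e.\ by reading exactness of $\Ind^{\FF[R_n]}_{\FF[R_{(n_1,n_2)}]}({}_-)$ off that commutative square, with the horizontal functors equivalences (Lemma~\ref{lm:Requi} and its product version) and $\mathcal{F}'$ exact as a direct sum of inductions along free ring extensions of symmetric group algebras. You merely spell out the details the paper leaves implicit.
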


For $n_1,n_2,\ldots,n_s\in\NN$, let 
$R_{(n_1,n_2,\ldots,n_s)}:=R_{n_1}\times R_{n_2}\times \cdots\times R_{n_s}$. 
Then, as  usual, we have the following decomposition involving the corresponding monoid algebras:
\begin{displaymath} 
\FF[R_{(n_1,n_2,\ldots,n_s)}]=\FF[R_{n_1}]\otimes_\Bbbk \FF[R_{n_2}]\otimes_\Bbbk 
 \cdots\otimes_\Bbbk \FF[R_{n_s}].
\end{displaymath}
Now we are ready to discuss the bialgebra structure on $\mathcal{G}_{\CC}$.

\subsubsection{Multiplication}
Since we have to deal with modules over $\FF[R_n]$ for all $n\in \NN$ in the same course of a proof or a statement, for the sake of clarity we decorate a module over $\FF[R_n]$ by putting superscript $n$ on it. This notational convention applies for modules over symmetric group algebras as well. For $V^{n}\in \FF[R_n]\Mod$ and $W^{m}\in \FF [R_m]\Mod$, 
we have that $V^n\otimes_\Bbbk W^m\in \FF[R_{(n,m)}]\Mod$. Define:
\begin{align}\label{al:grothmulti}
[V^{n}] [W^{m}]=[\Ind^{\FF[R_{n+m}]}_{\FF[R_{(n,m)}]}(V^{n}\otimes_{\FF} W^{m})].
\end{align}
Since the functor $\Ind^{\FF[R_{n+m}]}_{\FF[R_{(n,m)}]}(-)$ is exact, 
$\eqref{al:grothmulti}$ gives rise to a well-defined multiplication on $\mathcal{G}_{\CC}$.  
Associativity of tensor products and also of the induction functor imply  the associativity of $\eqref{al:grothmulti}$. 
Let $\FF^{0}$ denote the trivial $\FF[R_{0}]$-module.
Then $[\FF ^0]\in G_0( \FF[R_0])$ is the unit with respect to this multiplication.
Thus $\mathcal{G}_{\CC}$ becomes a unital algebra with respect to the 
multiplication given by \eqref{al:grothmulti}.

\subsubsection{Comultiplication}	

Define 
\begin{align}\label{al:grothcomulti}
\Delta([V^{n}])=\sum_{\substack{n_1,n_2\in \mathbb{N}\\
n_1+n_2=n}}[\Res^{\FF[R_n]}_{\FF[R_{(n_1,n_2)}]}V^n].
\end{align}
Using the identification
\begin{align*}
	\bigoplus_{n_1,n_2\in \NN}G_0(\FF[R_{(n_1,n_2)}])\cong 
	\mathcal{G}_{\CC}\otimes_{\CC} \mathcal{G}_{\CC},
\end{align*}
$\Delta([V^{n}])\in\mathcal{G}_\CC\otimes_{\CC}\mathcal{G}_\CC$. Then, both sides of the coassociativity condition for \eqref{al:grothcomulti} 
reduce, essentially, to computation of every 
$\Res^{\FF[R_n]}_{\FF[R_{(n_1,n_2,n_3)}]}(V^n)$, for $n=n_1+n_2+n_3$. Consider the map 
$\epsilon:\mathcal{G}_{\CC }\to \FF$ which sends the basis element 
$[\FF^{0}]\in G_0(\FF[R_{0}])$ to $1\in \FF$ and is zero on all other basis elements. 
It is straightforward that the map $\epsilon$ is a counit of $\mathcal{G}_{\CC}$, and 
so $\mathcal{G}_{\CC}$  becomes a coalgebra with respect to $\Delta$ and $\epsilon$.

\subsubsection{Compatibility}	

The vector space $V_{\NN}$ is isomorphic to the monoid algebra $\CC[\NN]$ of the monoid 
$(\NN,+)$ of natural numbers. Therefore $V_{\NN}$ inherits from $\CC[\NN]$ the structure 
of a bialgebra, where the multiplication is given by 
the monoid operation (addition) and the value of the comultiplication on $i\in \NN$ 
is  $\displaystyle{\sum_{\substack{i_1,i_2\in \NN\\ i_1+i_2=i}}}{i_1\otimes i_2}$. 
It is well-known that $V(\Lambda_{0})$ is a Hopf algebra, where the multiplication 
and comultiplication are given by replacing $R_n$ by $S_n$ in $\eqref{al:grothmulti}$ 
and in $\eqref{al:grothcomulti}$, respectively. As a consequence, 
we obtain that $V(\Lambda_{0})\otimes_\CC V_{\NN}$ is, naturally, a bialgebra.

Next we prove that the respective multiplication and comultiplication maps are 
preserved under the isomorphism \eqref{iso:groth}. In particular, this  implies that 
$\Delta$ is compatible with multiplication and thereby $\mathcal{G}_{\CC}$  possess the 
structure of a bialgebra.

\begin{theorem}\label{thm:bi}
The isomorphism \eqref{iso:groth} $\Phi$ preserves multiplication and 
comultiplication. In particular, $\mathcal{G}_{\CC}$  is a bialgebra.
\end{theorem}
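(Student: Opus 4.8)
The plan is to verify directly that the isomorphism $\Phi$ of $U(\widehat{\mathfrak{sl}}_p(\CC))\otimes_\CC\CC[\mathcal{B}]$-modules from \eqref{iso:groth} is a morphism of bialgebras, by checking that it intertwines multiplication and comultiplication on basis elements. Recall that $\Phi([\mathcal{L}^n_j(D^\lambda)]) = [D^\lambda]\otimes (n-j)$, that $\{[D^\lambda]\mid \lambda\in\Lambda^p(j)\}$ is a basis of $V(\Lambda_0)$ in degree $j$, and that $V_\NN\cong\CC[\NN]$ as a bialgebra with $i\mapsto\sum_{i_1+i_2=i}i_1\otimes i_2$ on comultiplication and $i_1\cdot i_2=i_1+i_2$ on multiplication. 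Since $\Phi$ is already known to be a linear isomorphism, it suffices to check the two compatibilities on the basis $\boldsymbol{\mathcal{Q}}_n$.

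\textbf{Multiplication.} First I would compute $[\mathcal{L}^n_j(D^\lambda)]\cdot[\mathcal{L}^m_k(D^\mu)] = [\Ind^{\FF[R_{n+m}]}_{\FF[R_{(n,m)}]}(\mathcal{L}^n_j(D^\lambda)\otimes_\FF\mathcal{L}^m_k(D^\mu))]$. Using Corollary~\ref{cor:Ind} (with the two-block factorization) together with Lemma~\ref{lm:Requi}, the bimodule $\FF\mathbb{L}^n_j\otimes_\FF\FF\mathbb{L}^m_k$ induced up to $\FF[R_{n+m}]$ decomposes according to how the $(j+k)$-element index set sits inside $[n+m]$; the upshot is that $\Ind^{\FF[R_{n+m}]}_{\FF[R_{(n,m)}]}\circ(\mathcal{L}^n_j\boxtimes\mathcal{L}^m_k)\cong\mathcal{L}^{n+m}_{j+k}\circ\Ind^{\FF[S_{j+k}]}_{\FF[S_{(j,k)}]}$, exactly the functor $\mathcal{F}'$ of Corollary~\ref{cor:Ind} landing in the rank-$(j+k)$ summand. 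Hence $[\mathcal{L}^n_j(D^\lambda)]\cdot[\mathcal{L}^m_k(D^\mu)] = [\mathcal{L}^{n+m}_{j+k}(D^\lambda\cdot D^\mu)]$, where $D^\lambda\cdot D^\mu$ is the product in $V(\Lambda_0)$. Applying $\Phi$ gives $[D^\lambda\cdot D^\mu]\otimes\big((n+m)-(j+k)\big) = ([D^\lambda]\otimes(n-j))\cdot([D^\mu]\otimes(m-k))$ in $V(\Lambda_0)\otimes V_\NN$, since $(n-j)+(m-k) = (n+m)-(j+k)$ is precisely the multiplication in $\CC[\NN]$. This is what we need.

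\textbf{Comultiplication.} Next I would compute $\Delta([\mathcal{L}^n_j(D^\lambda)]) = \sum_{n_1+n_2=n}[\Res^{\FF[R_n]}_{\FF[R_{(n_1,n_2)}]}\mathcal{L}^n_j(D^\lambda)]$. By Lemma~\ref{lm:Res}, this restriction decomposes as $\bigoplus_{j_1+j_2=j}\mathcal{L}^{(n_1,n_2)}_{(j_1,j_2)}(\Res^{\FF[S_j]}_{\FF[S_{(j_1,j_2)}]}D^\lambda)$, subject to $0\le j_1\le n_1$, $0\le j_2\le n_2$. Passing to Grothendieck groups and writing $\Delta_{V(\Lambda_0)}([D^\lambda]) = \sum [D^{\lambda_1}]\otimes[D^{\lambda_2}]$ (sum over the pieces of $\Res^{\FF[S_j]}_{\FF[S_{(j_1,j_2)}]}D^\lambda$ with $|\lambda_1|=j_1$, $|\lambda_2|=j_2$), we get $\Delta([\mathcal{L}^n_j(D^\lambda)]) = \sum_{n_1+n_2=n}\ \sum_{j_1+j_2=j}[\mathcal{L}^{n_1}_{j_1}(D^{\lambda_1})]\otimes[\mathcal{L}^{n_2}_{j_2}(D^{\lambda_2})]$. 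Applying $\Phi\otimes\Phi$ yields $\sum [D^{\lambda_1}]\otimes(n_1-j_1)\otimes[D^{\lambda_2}]\otimes(n_2-j_2)$. On the other side, $(\Phi\otimes\Phi)^{-1}$ composed with the bialgebra comultiplication on $V(\Lambda_0)\otimes V_\NN$ sends $[D^\lambda]\otimes(n-j)$ to $\big(\sum [D^{\lambda_1}]\otimes[D^{\lambda_2}]\big)\otimes\big(\sum_{t_1+t_2=n-j}t_1\otimes t_2\big)$; reindexing $t_1 = n_1-j_1$ forces, given $j_1$, a unique compatible split and one checks $t_1+t_2 = n-j$ iff $n_1+n_2=n$. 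Matching the two expressions term by term (using that the constraints $0\le j_i\le n_i$ are automatically respected because $n_i - j_i = t_i\ge 0$ and $j_i\ge 0$) gives the desired equality. The main obstacle is the bookkeeping in this last matching: one must be careful that every $(n_1,n_2,j_1,j_2)$ occurring on the $\mathcal{G}_\CC$ side corresponds to a genuine term $t_1\otimes t_2$ with $t_i\in\NN$ on the tensor-product side and conversely, i.e. that the range constraints line up exactly; but since $V_\NN\cong\CC[\NN]$ has \emph{no} upper bound on its grading while the only real constraint $j_i\le n_i$ is equivalent to $n_i-j_i\in\NN$, this matches precisely. Together with the multiplicativity above, this shows $\Phi$ is a bialgebra isomorphism, and since $V(\Lambda_0)\otimes_\CC V_\NN$ is a bialgebra, so is $\mathcal{G}_\CC$.
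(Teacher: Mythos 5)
Your proposal is correct and follows essentially the same route as the paper: the comultiplication compatibility is verified exactly as in the paper's proof, by pushing $\Res^{\FF[R_n]}_{\FF[R_{(n_1,n_2)}]}$ through Lemma~\ref{lm:Res}, applying $\Phi\otimes\Phi$, and matching index sets via $t_i=n_i-j_i$. The only difference is that you also spell out the multiplication compatibility explicitly via Corollary~\ref{cor:Ind} (giving $\Ind^{\FF[R_{n+m}]}_{\FF[R_{(n,m)}]}\circ(\mathcal{L}^n_j\boxtimes\mathcal{L}^m_k)\cong\mathcal{L}^{n+m}_{j+k}\circ\Ind^{\FF[S_{j+k}]}_{\FF[S_{(j,k)}]}$), a step the paper's written proof leaves implicit, and this added detail is correct.
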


\begin{proof}
For the comultiplication maps $\Delta_{V(\Lambda_{0})}$ and $\Delta_{V_{\NN}}$ 
of $V{(\Lambda_{0})}$ and $V_{\NN}$, respectively, the comultiplication on 
$V(\Lambda_{0})\otimes_{\CC} V_{\NN}$ is given by
\begin{displaymath}
\Delta_{ V(\Lambda_{0})\otimes_{\CC} V_{\NN}}:=(\id_{V(\Lambda_{0})}
\otimes\tau\otimes\id_{V_{\NN}})\circ(\Delta_{V(\Lambda_{0})}\otimes \Delta_{V_{\NN}}),
\end{displaymath}
where $\tau:V{(\Lambda_{0})}\otimes_\CC V_{\NN}\to V_{\NN}\otimes_\CC V{(\Lambda_{0})}$	
is  the swap of the tensor factors.

For $i\leq n$, let $M^{i}\in \FF[S_i]\Mod$. We want to show that  
\begin{align}\label{al:comulti}
((\Phi\otimes\Phi)\circ\Delta)([\mathcal{L}_{i}^n(M^{i})])=
(\Delta_{ V(\Lambda_{0})\otimes_{\CC} V_{\NN}}\circ\Phi)([\mathcal{L}_{i}^n(M^{i})]).
\end{align}

Under the identification
\begin{align*}
	\bigoplus_{n_1,n_2\in \NN}G_0(\FF[S_{(n_1,n_2)}])\cong 
	\tilde{\mathcal{G}}_{\CC}\otimes_{\CC} \tilde{\mathcal{G}}_{\CC},
\end{align*}
we fix a decomposition of $[\Res^{\FF[S_i]}_{\FF[S_{(j,k)}]}(M^i)]$ of the form
$\displaystyle \sum_{a,b\in \NN} [V_a^j]\otimes[V_b^k]$ (where all but finitely many
summands are zero). Now, using Lemma~\ref{lm:Res},
the left hand side of \eqref{al:comulti} can be computed as follows:
\begin{align*}	
&((\Phi\otimes\Phi)\circ\Delta)([\mathcal{L}_{i}^n(M^{i})])\\&=
\Phi\otimes\Phi\bigg(\sum_{\substack{n_1,n_2\in\NN\\ n_1+n_2=n}}
[\Res^{\FF[R_n]}_{\FF[R_{(n_1,n_2)}]}(\mathcal{L}_i^{n}(M^i))]\bigg)\\
&= \Phi\otimes\Phi \bigg(\sum_{\substack{n_1,n_2\in\NN\\ n_1+n_2=n}}
\bigg[\bigoplus_{\substack{j\in\{0,\ldots,n_1\}\\k\in\{0,\ldots , n_2\}\\ j+k=i}}
\mathcal{L}_{(j,k)}^{(n_1,n_2)}\circ \mathcal{F}(M^i)\bigg]\bigg)\\
&=\Phi\otimes\Phi \bigg(\sum_{\substack{n_1,n_2\in\NN\\ n_1+n_2=n}}
\bigg[\bigoplus_{\substack{j\in\{0,\ldots,n_1\}\\k\in\{0,\ldots,n_2\}\\ j+k=i}}
\mathcal{L}_{(j,k)}^{(n_1,n_2)}\Res^{\FF[S_i]}_{\FF[S_{(j,k)}]}(M^i)\bigg]\bigg)\\
&=\Phi\otimes\Phi \bigg(\sum_{\substack{n_1,n_2\in\NN\\ n_1+n_2=n}}
\sum_{\substack{j\in\{0,\ldots,n_1\}\\k\in\{0,\ldots,n_2\}\\ j+k=i}}
\bigg[\mathcal{L}_{(j,k)}^{(n_1,n_2)}\Res^{\FF[S_i]}_{\FF[S_{(j,k)}]}(M^i)\bigg]\bigg)\\
&=\Phi\otimes\Phi \bigg(\sum_{\substack{n_1,n_2\in\NN\\ n_1+n_2=n}}
\sum_{\substack{j\in\{0,\ldots,n_1\}\\k\in\{0,\ldots,n_2\}\\ j+k=i}}
\bigg[\mathcal{L}_{(j,k)}^{(n_1,n_2)}\bigg] \bigg[\Res^{\FF[S_i]}_{\FF[S_{(j,k)}]}(M^i)\bigg]\bigg)\\
&=\Phi\otimes\Phi \bigg(\sum_{\substack{n_1,n_2\in\NN\\ n_1+n_2=n}}
\sum_{\substack{j\in\{0,\ldots,n_1\}\\k\in\{0,\ldots,n_2\}\\ j+k=i}}
\bigg[\mathcal{L}_{(j,k)}^{(n_1,n_2)}\bigg] \sum_{a,b\in \NN} [V_a^j]\otimes[V_b^k]\bigg)\\
&=\Phi\otimes\Phi \bigg(\sum_{\substack{n_1,n_2\in\NN\\ n_1+n_2=n}}\sum_{\substack{j\in\{0,\ldots,n_1\}\\k\in\{0,\ldots,n_2\}\\ j+k=i}}\sum_{a,b\in \NN}[\mathcal{L}_j^{n_1}(V_a^j)]\otimes[\mathcal{L}_k^{n_2}(V_b^k)]\bigg)\\
&=\sum_{\substack{n_1,n_2\in\NN\\ n_1+n_2=n}}\sum_{\substack{j\in\{0,\ldots,n_1\}\\k\in\{0,\ldots,n_2\}\\ j+k=i}}\sum_{a,b\in \NN}\Phi[(\mathcal{L}_j^{n_1}(V_a^j))]\otimes\Phi[(\mathcal{L}_k^{n_2}(V_b^k))]\\
&=\sum_{\substack{n_1,n_2\in\NN\\ n_1+n_2=n}}\sum_{\substack{j\in\{0,\ldots,n_1\}\\k\in\{0,\ldots,n_2\}\\ j+k=i}}\sum_{a,b\in \NN}\bigg([V_a^j]\otimes (n_1-j)\bigg)\otimes\bigg([V_b^k]\otimes(n_2-k)\bigg).
\end{align*}

On the other hand, the right hand side of \eqref{al:comulti}
can be computed as follows:
\begin{align*}
&(\Delta_{ V(\Lambda_{0})\otimes_{\CC} V_{\NN}}\circ\Phi)([\mathcal{L}_{i}^n(M^{i})])\\
&=\Delta_{ V(\Lambda_{0})\otimes_{\CC} V_{\NN}}([M^{i}]\otimes (n-i))\\
&=(\id_{V(\Lambda_{0})}\otimes\tau\otimes\id_{V_{\NN}})
\bigg(\Delta_{V(\Lambda_{0})}[M^{i}]\otimes \Delta_{V_{\NN}}(n-i)\bigg)\\
&=(\id_{V(\Lambda_{0})}\otimes\tau\otimes\id_{V_{\NN}})\left(
\bigg(\sum_{\substack{p,q\in\NN\\ p+q=i}}[\Res^{\FF[S_i]}_{\FF[S_{(p,q)}]}(M^i)]\bigg)
\otimes \bigg(\sum_{\substack{r,s\in\NN\\ r+s=n-i}}r\otimes s\bigg)\right)\\
&=(\id_{V(\Lambda_{0})}\otimes\tau\otimes\id_{V_{\NN}})\left(
\bigg(\sum_{\substack{p,q\in\NN\\ p+q=i}}\sum_{c,d\in\NN}[V_c^{p}]
\otimes [V_d^{q}]\bigg)\otimes \bigg(\sum_{\substack{r,s\in\NN\\ r+s=n-i}}r\otimes s\bigg)\right)\\
&=\sum_{c,d\in\NN}\sum_{\substack{p,q\in\NN\\ p+q=i}}
\bigg(\sum_{\substack{r,s\in\NN\\ r+s=n-i}}([V_c^p]\otimes r)\otimes ([V_d^q]\otimes s)\bigg).
\end{align*}
This implies \eqref{al:comulti} and we are done.
\end{proof}

\subsection{The case of generalized rook monoids}\label{sec:gen}

Suppose $p$ does not divide $r$. For $r\in [n]$, we have $X_i^{r}=X_{i}$ 
and hence the eigenvalues of each $X_i$, considered as an operator on a 
$\FF[C_{r}\wr R_n]$-finite-dimensional module, are  either 
$r$-th roots of unity or $0$. Similarly, the 
eigenvalues of Jucys--Murphy elements $Y_i$ as an operator 
on a finite dimensional module over $\FF[C_r\wr R_n]$ lie in $\FF_p$.  Using this and Proposition \ref{prop:gencenter}, we get a decomposition of every object in $\FF[C_r\wr R_n]\Mod$ as in Lemma.  
This allows us to define the $i$-induction and $i$-restriction 
functors as well as the functors corresponding to the two generators 
of the bicyclic monoid $\mathcal{B}$. Using the 
results for the generalized symmetric groups $C_r\wr S_n$ from
\cite{ShuJ} (see also \cite{Wang} for the more general setting), one shows that the direct sum, over all $n$, 
of  $G_0(\FF[C_r\wr R_n])$ is a
$U(\widehat{\mathfrak{sl}}_{p}(\CC))^{\otimes r}\otimes \CC[\mathcal{B}]$-module
isomorphic to 
$V(\Lambda_{0})^{\otimes r}\otimes V_{\NN}$.

\begin{remark}
The results of Section~\ref{sec:gro} have the obvious analogues in characteristic zero (with the same proofs), 
where the field $\FF_p$ is replaced by the ring $\ZZ$ of integers and, consequently,
the Lie algebra $\widehat{\mathfrak{sl}}_{p}(\CC)$ is replaced by $\widehat{\mathfrak{sl}}_{\infty}(\CC)$.  Also, the basic representation $V(\Lambda_0)$ now becomes the Fock space representation of $\widehat{\mathfrak{sl}}_{\infty}(\CC)$.
\end{remark}

	{\footnotesize
\bibliographystyle{alphaurl}

}

\end{document}